\newtheorem{Theorem}{Theorem}[section]
\newtheorem{Proposition}{Proposition}[section]
\newtheorem{Lemma}{Lemma}[section]
\newtheorem{Remark}{Remark}[section]
\numberwithin{equation}{section}
\def\E{\mathbb{E}}
\def\P{\mathbb{P}}
\def\real{\mathbb{R}}
\def\S{\mathbb{S}}
\def\H{\mathbb{H}}
\def\1{\textbf{1}}
\def\F{\mathbb{F}}
\title{Density analysis of non-Markovian BSDEs and applications to biology and finance.}
\author{Thibaut Mastrolia\footnote{Universit\'e Paris-Dauphine, CEREMADE UMR CNRS 7534, Place du Mar\'echal De Lattre De Tassigny, 75775 Paris Cedex 16, FRANCE, \texttt{mastrolia@ceremade.dauphine.fr}}}
\begin{document}
\maketitle
\centering \Large 
\vspace{2em}

\abstract{\noindent In this paper, we provide conditions which
ensure that stochastic Lipschitz BSDEs admit Malliavin differentiable solutions. We investigate the problem of existence of densities for the first components
of solutions to general path-dependent stochastic Lipschitz BSDEs and obtain results for the second components in particular cases. We apply these results to both the study of a gene
expression model in biology and to the classical pricing problems in mathematical finance.
\vspace{1em}

{\noindent \textit{Key words: BSDEs, Malliavin calculus, Nourdin-Viens' Formula, gene expression, option pricing.} 
}
\vspace{1em}

{\noindent \textit{AMS 2010 subject classification:} Primary: 60H10; Secondary: 60H07, 91G30, 92D20.~\newline
\normalsize
}

\vspace{1em}
\noindent

\section{Introduction}
The problem of existence of densities for random processes, as \textit{e.g.} solutions of stochastic differential equations (SDEs), has been a very active strand of research in the last two decades, see among others \cite{KohatsuH_EllipticRandomVariable, Nualart_Book}. A very useful criterion to prove that the law of a random variable admits a density is the criterion of Bouleau and Hirsch, see \textit{e.g.} \cite[Theorem 2.1.2]{Nualart_Book}. The analysis of densities has been the subject of several works dealing with Stochastic Partial
Differential Equations (SPDEs), among which we can mention the study of the stochastic heat equation, the stochastic wave equation (see for instance \cite{MuellerNualart}, \cite{NualartQuerS}, \cite{MilletS-S}), the Navier-Stokes equation \cite{DebusscheRomito} and recently the Landau equation for Maxwellian molecules (see \cite{DelarueNualartMenozzi}). Besides, most of these papers investigate tails estimates of the solutions to SPDEs by using the formula of Nourdin and Viens, introduced in \cite{NourdinViens}, to have a better understanding of these processes.\vspace{0.3em}

\noindent Although the problem of existence of densities for S(P)DEs, together with estimates on their tails, has been a prosperous field, the corresponding theory for Backward Stochastic Differential Equations (BSDEs) has not received the same attention in the literature. BSDEs were introduced for the first time in 1973 by Bismut in \cite{Bismut}, in order to study stochastic control problems and their links to the Pontryagin maximum principle. The theory of BSDEs was then formalised and developed in the 90's, with the seminal papers \cite{PardouxPeng_AdaptedSolutionBSDE,PardouxPeng_BSDEQuasilinearParabolicPDE} and \cite{ELKPengQuenez}. In the last decades, BSDEs have been the object of an ever growing interest, since these equations naturally appear in financial problems, as for instance pricing problems (see \cite{ELKPengQuenez}) and utility maximisation problems (see \cite{ELKRouge}, \cite{HuImkellerMuller}).\vspace{0.3em}

\noindent As far as we know, the existence of densities for solutions to BSDEs was studied in three papers. Conditions ensuring that the first component $Y$ of the solution to a Lipschitz BSDE admits a density were provided for the first time in \cite{AntonelliKohatsu}. In this paper, the authors also investigated both estimates on the existing density and its smoothness. Then, a result ensuring existence of a density for the second component $Z$ of the solution to a particular BSDE, in which the generator is linear with respect to its $z$ variable, was obtained in \cite{AbouraBourguin}. Recently, this problem was studied in \cite{MastroliaPossamaiReveillac_Density} for both the $Y$ and the $Z$ components of solutions to BSDEs with a quadratic growth generator. However, \cite{AntonelliKohatsu, MastroliaPossamaiReveillac_Density} only consider Markovian BSDEs, that is the case where the data $\xi$ and $\omega\longmapsto f(s,\omega,y,z)$ of such equations are only random through a Markovian process, and \cite{AbouraBourguin} only considers the semi-Markovian case, that is the case where only $\omega\longmapsto f(s,\omega,y,z)$ is Markovian.\vspace{0.3em}

\noindent Although the previous studies are interesting from a mathematical point of view, these results seem to be too restrictive for applications. As an example, consider a pricing problem which could be reduced to solve the following BSDE (see \cite{ELKPengQuenez} for more details)
$$dY_t=(r_tY_t+\theta_t Z_t )dt+Z_t dW_t,\, Y_T=\xi ,$$
where $r$ denotes the interest rate of the market, $\theta$ is the market price of risk and $\xi$ is the liability. As noticed in \cite{ELKHuang}, assuming that $r$ is bounded, for instance, is not realistic. This remark led the authors of \cite{ELKHuang} to define a new class of BSDEs satisfying a so-called stochastic Lipschitz condition for their generator. Existence and uniqueness results have been obtained for this class of BSDEs first in \cite{ELKHuang}, and have then been extended in \cite{BenderKohlmann, wang_ran_chen, BriandConfortola} among others.\vspace{0.5em}

\noindent The problem of existence of densities for the laws of components of solutions to stochastic Lipschitz BSDEs has not been studied yet, and \textit{a fortiori} in the non-Markovian framework, \textit{i.e.} when neither the terminal condition $\xi$ nor $\omega\longmapsto f(s,\omega,y,z)$ depend on the randomness through a Markovian process. We give in the present paper conditions on $\xi$ and $f$ to solve these problems. Besides, although it is well-known that under suitable conditions on the data, a  non-Markovian stochastic Lipschitz BSDEs admits a unique solution (see \cite{ELKHuang, wang_ran_chen, BenderKohlmann, BriandConfortola}), the Malliavin differentiability of the solutions to such BSDEs has not been studied yet in the general case. In order to apply Bouleau and Hirsch's Criterion (\cite[Theorem 2.1.2]{Nualart_Book}) to solve the problem of existence of densities for the law of $Y$ and $Z$, we provide also in this paper conditions which ensure that the components $Y$ and $Z$ solutions to non-Markovian stochastic Lipschitz BSDEs are Malliavin differentiable.\vspace{0.3em}

\noindent The structure of this paper is the following. After some preliminaries and notations in Section \ref{bio_preliminaries}, we provide in Section \ref{section:MDslipsch} two approaches to study the Malliavin differentiability of solutions to stochastic Lipschitz BSDEs. Indeed, in view of the classical literature, we distinguish two types of assumptions which provide existence and uniqueness of solutions to stochastic Lipschitz BSDE. On the one hand, we have assumptions as in \cite{ELKHuang,BenderKohlmann, wang_ran_chen} dealing with $\beta$-spaces (see $\mathbb S_{2p,\beta}$ and $\mathbb H_{2p,\beta}$ below), on the other hand, we have assumptions dealing (mainly) with the BMO-norm of the data, as in \cite{BriandConfortola}. We then reach in this paper two kind of conditions which ensure that the components of the solution $(Y,Z)$ to a stochastic Lipschitz BSDE are Malliavin differentiable. The first one, investigated in Section \ref{sto:lip:1ereapproche}, is based on the papers \cite{ELKHuang,BenderKohlmann, wang_ran_chen}. Using \textit{{\it a priori}} estimates for solutions to stochastic Lipschitz BSDE, obtained in \cite[Proposition 3.6]{wang_ran_chen}, we have conditions on the data of such BSDE which provide the Malliavin differentiability of $Y$ and $Z$ (see Assumption $\mathbf{(DsL^{p,\beta}})$). The second approach, studied in Section \ref{sto:lip:2emeapproche}, is based on the papers \cite{BriandConfortola, AIR}. We give assumptions, similar to those obtained in \cite{MastroliaPossamaiReveillac_MDBSDE}, see Assumptions ($\mathbf{sH_{1,\infty}}$) and ($\mathbf{sH_{2,\infty}}$) below, which ensure that $Y$ and $Z$ are Malliavin differentiable. We then compare these two approaches, and the corresponding conditions, in Section \ref{Mbio:comparaison}.\vspace{0.5em}

\noindent By taking advantage of the results obtained in Section \ref{section:MDslipsch}, we deal in Section \ref{Mbio:densityY} with the problem of existence of densities for the laws of solutions to stochastic Lipschitz BSDE in the non-Markovian case. We give in Section \ref{chapBioMbio:section:density:stolip} conditions which ensure the existence of densities for the law of the $Y$ component of the solution to stochastic Lipschitz BSDEs, by using Bouleau and Hirsch's Criterion. We provide weaker conditions in Section \ref{chapBioMbio:section:dnesity:lip} for the $Y$ component of the solution to a non-Markovian Lipschitz BSDE. We then turn to the $Z$ component in Section \ref{chapBioMbio:section:discussion}. We first provide in Section \ref{soussection:z:linear} conditions ensuring that the law of the $Z_t $ component has a density for a particular class of BSDE, extending the results of \cite{AbouraBourguin}. We then explain in Section \ref{someremark:z} why we are not able to adapt the proofs of \cite{MastroliaPossamaiReveillac_Density} to the non-Markovian framework for the $Z$ components of solutions to general non-Markovian BSDEs and we indicate paths for future researches. We finally apply our study in Sections \ref{chapBio:section:gene} and \ref{Mbio:section:finance} to biology and finance respectively.\vspace{0.5em}

\noindent In Section \ref{chapBio:section:gene}, we propose to study mathematically a model of synthesis of proteins introduced in \cite{shamarova_etal.}, with the Malliavin calculus. Indeed, in order to validate their model, the authors of \cite{shamarova_etal.} need to compare the law of the protein concentration at time $t$ obtained by solving a BSDE with the data produced by Gillespie Method (see \cite{Gillespie77}). However, in \cite{shamarova_etal.}, the authors assumed implicitly that the law of the first component $Y_t$ of the BSDE under consideration admits a density with respect to the Lebesgue measure. The present paper can be seen as a mathematical strengthening of the model developed in \cite{shamarova_etal.} by using the so-called Nourdin and Viens' formula to obtain Gaussian estimates of the density. Besides, we propose to extend their model to the non-Markovian setting, which could be quite relevant when we study the synthesis of protein in some models (see for instance \cite{BratsunVolfsonHastyTsimring,Leoncini_Thesis,FromionLeonciniRobert}). \vspace{0.5em}

\noindent In Section \ref{Mbio:section:finance}, we study classical pricing problems. As showed in \cite{ELKPengQuenez}, this problem can be reduced to solve a stochastic linear BSDE. In this section we aim at applying the results obtained in previous sections to Asian and Lookback options in the Va\v{s}\`i\v{c}ek Model to obtain information on both the regularity of the value function and the regularity of optimal strategies.
\section{Preliminaries and notations}\label{bio_preliminaries}
\subsection{Notations} We denote by $\lambda$ the Lebesgue measure on $\mathbb R$. Let $T>0$ be a time fixed horizon. Let $\Omega:=C_0([0,T],\mathbb R)$ be the canonical Wiener space of continuous function $\omega$ from $[0,T]$ to $\mathbb R$ such that $\omega(0)=0$. We denote by $W:=(W_t)_{t\in [0,T]}$ the canonical Wiener process, that is, for any time $t$ in $[0,T]$, $W_t(\omega):=\omega_t$ for any element $\omega$ in $\Omega$. We set $\mathbb F^o$ the natural filtration of $W$. Under the Wiener measure $\mathbb P$, the process $W$ is a standard Brownian motion and we denote by $\mathbb F:=(\mathcal F_t)_{t\in[0,T]}$ the usual right-continuous and complete augmentation of $\mathbb F^o$ under $\mathbb P$. For the sake of simplicity, we denote all expectations under $\mathbb P$ by $\mathbb E$ and we set for any $t\in [0,T]$ $\mathbb E_t[\cdot]:=\mathbb E [\cdot | \mathcal F_t]$. Besides, all notions of measurability for elements of $\Omega$ will be with respect to the filtration $\mathbb F$ or the $\sigma$-field $\mathcal F_T$. \vspace{0.5em}

\noindent We set $\mathfrak h:=L^2([0,T],\mathbb R)$, where $\mathcal B([0,T])$ is the Borel $\sigma$-algebra on $[0,T]$, and consider the following inner product on $\mathfrak h$
$$\langle f,g\rangle :=\int_0^Tf(t)g(t)dt, \quad \forall (f,g) \in \mathfrak h^2,$$
with associated norm $\|{\cdot}\|_{\mathfrak h}$. 
Let now $H$ be the Cameron-Martin space that is the space of functions in $\Omega$ which are absolutely continuous with square-integrable derivative and which start from $0$ at $0$:
$$H:=\left\{ h:[0,T] \longrightarrow \real, \; \exists \dot{h}\in\mathfrak h, \; h(t)=\int_0^t \dot{h}(x)dx, \; \forall t\in [0,T]\right\}.$$
For any $h$ in $H$, we will always denote by $\dot{h}$ a version of its Radon-Nykodym density with respect to the Lebesgue measure. 
Notice that $H$ is an Hilbert space equipped with the inner product $\langle h_1,h_2 \rangle_{H}:=\langle \dot{h}_1,\dot{h}_2 \rangle_{\mathfrak h}$, for any $(h_1, h_2)\in H\times H$, and with associated norm $\|h\|_H^2:=\langle \dot h, \dot h \rangle_\mathfrak h$.
Let $p\geq 1$. Define  $L^p(\mathcal K)$ as the set of all $\mathcal F_T$-measurable random variables $F$ which are valued in an Hilbert space $\mathcal{K}$, and such that $\|F\|_{L^p(\mathcal K)}<+\infty$, where
$$\|F\|_ {L^p(\mathcal K)}:=\E\left[\|F\|_ {\mathcal K}^p\right]^{1/p},$$
where the norm $\| \cdot \|_{\mathcal K}$ is the one canonically induced by the inner product on $\mathcal K$. 
We define
$$L^p([t,T];\mathcal K):=\left\{f:[t,T]\longrightarrow\mathcal K, \text{ Borel-measurable, s.t. }\int_t^T \| f(s)\|^p_{\mathcal{K}}ds <+\infty \right\}.$$

\noindent Set $\rm{BMO}(\mathbb P)$ as the space of square integrable, continuous, $\mathbb R$-valued martingales $M$ such that
$$\|M\|_{\rm{BMO}}:=\underset{\tau\in\mathcal T_0^T}{\text{esssup}}\left\|\mathbb E_\tau\left[\left(M_T-M_{\tau}\right)^2\right]\right\|_{\infty}<+\infty,$$
where for any $t\in[0,T]$, $\mathcal T_t^T$ is the set of $\F$-stopping times taking their values in $[t,T]$. Accordingly, $\mathbb H^2_{\rm{BMO}}$ is the space of $\mathbb R$-valued and $\F$-predictable processes $Z$ such that
$$\|Z\|^2_{\mathbb H^2_{\rm{BMO}}}:=\left\|\int_0^.Z_sdW_s\right\|_{\rm{BMO}}<+\infty.$$
Denoting by $\mathcal E(M)$ the stochastic exponential of a semi-martingale $M$, we have finally the following result 

\begin{Theorem}\cite[Theorem 2.3]{Kazamaki}\label{kazamakithm}
If $M\in \mathrm{BMO}(\mathbb P)$ then $\mathcal E\left( M\right)$ is a uniformly integrable martingale.
\end{Theorem}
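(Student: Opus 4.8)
Since this is a classical result, quoted verbatim from \cite[Theorem 2.3]{Kazamaki}, I only outline the strategy of its proof. The starting point is that $\mathcal E(M)$ is a nonnegative local martingale, hence a supermartingale with $\mathcal E(M)_0=1$; on the finite horizon $[0,T]$ it is a uniformly integrable martingale as soon as it is a true martingale, and a nonnegative supermartingale is a true martingale precisely when its expectation is preserved, i.e. when $\E[\mathcal E(M)_T]=1$. The whole problem is therefore to rule out loss of mass in the localization, and the plan is to do this in two steps: first reduce to the case of a small $\mathrm{BMO}$-norm, then treat that case with a John--Nirenberg/energy estimate.

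For the reduction, I would use the continuity of $\langle M\rangle$ together with $\|M\|_{\mathrm{BMO}}<\infty$ to pick a deterministic subdivision $0=t_0<t_1<\dots<t_n=T$ for which each increment martingale $M^{(i)}:=M_{\cdot\wedge t_i}-M_{\cdot\wedge t_{i-1}}$ satisfies $\|M^{(i)}\|_{\mathrm{BMO}}\le\varepsilon$, with $\varepsilon>0$ fixed in advance (a standard subdivision lemma, see \cite{Kazamaki}). Since the $M^{(i)}$ have orthogonal quadratic variations one has $\mathcal E(M)_T=\prod_{i=1}^n\mathcal E(M^{(i)})_T$, and each $\mathcal E(M^{(i)})$ equals $1$ on $[0,t_{i-1}]$; conditioning successively on $\mathcal F_{t_{n-1}},\dots,\mathcal F_{t_1}$ then yields $\E[\mathcal E(M)_T]=1$ once every $\mathcal E(M^{(i)})$ is known to be a true martingale. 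Hence it suffices to prove the theorem under the extra assumption that $\|M\|_{\mathrm{BMO}}$ is as small as one wishes.

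For the small-norm case, I would start from $\|M\|_{\mathrm{BMO}}=\esssup_{\tau}\big\|\E_\tau[\langle M\rangle_T-\langle M\rangle_\tau]\big\|_\infty$ (recall $M^2-\langle M\rangle$ is a martingale) and iterate this bound to obtain the energy inequality $\E_\tau\big[(\langle M\rangle_T-\langle M\rangle_\tau)^n\big]\le n!\,\|M\|_{\mathrm{BMO}}^{\,n}$ for all $n$ and all stopping times $\tau$. Summing the exponential series gives, for $\|M\|_{\mathrm{BMO}}<2$, the estimate $\E\big[\exp(\tfrac12\langle M\rangle_T)\big]\le (1-\tfrac12\|M\|_{\mathrm{BMO}})^{-1}<+\infty$, and Novikov's criterion then shows that $\mathcal E(M)$ is a uniformly integrable martingale, which closes the small-norm case and, via the reduction, the general one. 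Alternatively, the same exponential estimates combined with H\"older's inequality yield a reverse H\"older inequality $\E_\tau[\mathcal E(M)_T^{\,p}]\le C_p\,\mathcal E(M)_\tau^{\,p}$ for some $p>1$, from which the uniform integrability of $\{\mathcal E(M)_{T\wedge\tau_k}\}_k$ along a localizing sequence $(\tau_k)_k$ is immediate.

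The only genuinely nontrivial ingredient is the energy/John--Nirenberg inequality, which is precisely what converts the $L^\infty$-type control of the conditional bracket into exponential integrability; both the subdivision reduction and the passage from exponential integrability to the martingale property are soft.
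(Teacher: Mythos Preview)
The paper does not give a proof of this statement: it is quoted directly from \cite[Theorem 2.3]{Kazamaki} and used as a black box. Your outline is therefore not to be compared against anything in the paper; it is simply a sketch of the classical argument, and as such it is essentially correct.

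One small point worth flagging in your reduction step: you propose a \emph{deterministic} subdivision $0=t_0<\dots<t_n=T$ making each increment $M^{(i)}$ have small $\mathrm{BMO}$-norm. In general this is not how the standard argument proceeds, because control of $\esssup_\tau\|\E_\tau[\langle M\rangle_T-\langle M\rangle_\tau]\|_\infty$ does not by itself force $\esssup_\tau\|\E_\tau[\langle M\rangle_{t_i}-\langle M\rangle_{\tau\vee t_{i-1}}]\|_\infty$ to be small on short deterministic intervals. The proof in \cite{Kazamaki} instead slices with \emph{stopping times} (essentially level sets of $\langle M\rangle$), which is exactly what makes the John--Nirenberg energy estimate $\E_\tau[(\langle M\rangle_T-\langle M\rangle_\tau)^n]\le n!\,\|M\|_{\mathrm{BMO}}^{\,n}$ apply to each piece. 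Once you replace ``deterministic subdivision'' by ``stopping-time subdivision'', the rest of your plan (energy inequality $\Rightarrow$ Novikov on each piece $\Rightarrow$ tower property to recover $\E[\mathcal E(M)_T]=1$, or alternatively the reverse H\"older route) is the standard one and is correct.
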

\noindent For any nonnegative $\mathbb F$-adapted process $\alpha$, we define the following increasing and continuous process
$$A_t^\alpha:= \int_0^t \alpha_s^2 ds.$$
Let $p>\frac12$, $\beta >0$ and let $\alpha$ be a nonnegative $\mathbb F$-adapted process, we define
\begin{align*}
\mathbb S_{2p,\beta, \alpha}&:= \left\{Y \text{ adapted and c\`adl\`ag, } \|Y \|_{\mathbb S_{2p,\beta, \alpha}}^{2p}:= \mathbb E\left[ \underset{0\leq t\leq T}{\sup} e^{p\beta A^\alpha_t} |Y_t|^{2p}\right]<+\infty \right\}.\\
\mathbb H_{2p,\beta, \alpha}&:= \left\{Y \text{ progressively measurable, } \|Y \|_{\mathbb H_{2p,\beta, \alpha}}^{2p}:= \mathbb E\left[\left(\int_0^T e^{\beta A^\alpha_s} |Y_s|^2 ds\right)^p\right]<+\infty \right\}. \\
\mathbb H^a_{2p,\beta, \alpha}&:=  \left\{Y \text{ progressively measurable, } \|Y \|_{\mathbb H^\alpha_{2p,\beta, \alpha}}^{2p}:= \mathbb E\left[\int_0^T \alpha_s^2e^{\beta A^\alpha_s} |Y_s|^{2p} ds\right]<+\infty \right\}.
\end{align*}

\noindent To match with the notations in \cite{BriandConfortola}, we define for any real $p>0$ the spaces $\mathcal S_p$ and $\mathcal H_p$ by
\begin{align*}
\mathcal S_p&:= \left\{Y \text{ adapted and c\`adl\`ag processes, } \|Y \|_{\mathcal S_{p}}:= \mathbb E\left[ \underset{0\leq t\leq T}{\sup} |Y_t|^{p}\right]^{1\wedge 1/p}<+\infty \right\}\\
\mathcal H_p&:= \left\{Z \text{ predictable processes, } \|Z \|_{\mathcal H_{p}}:= \mathbb E\left[\left(\int_0^T |Z_s|^2 ds\right)^{p/2}\right]^{1\wedge 1/p}<+\infty \right\}.
\end{align*}

\noindent  In particular, for any $p>\frac12$ we have $\mathcal S_{2p}=\mathbb S_{2p, 0, \alpha}$ and $\mathcal H_{2p}=\mathbb H_{2p, 0, \alpha}$. Notice moreover that the following inequality holds for any $p>\frac12, \; \beta>0$ and for any nonnegative $\mathbb F$-adapted process $\alpha$
\begin{equation}
\label{inegalite:norme}\|Y\|_{\mathcal S^{2p}}^{2p}+\| Z\|_{\mathcal H^{2p}}^{2p}\leq \| Y\|_{\mathbb S_{2p,\beta, \alpha}}^{2p}+\| Z\|_{\mathbb H_{2p,\beta, \alpha}}^{2p}.
\end{equation}

\subsection{Elements of Malliavin calculus}

We give in this section some results on the Malliavin calculus that we will use in this paper. Let now $\mathcal{S}$ be the set of cylindrical functionals, that is the set of random variables $F$ of the form
\begin{equation}
\label{eq:cylindrical}
F=f(W(h_1),\ldots,W(h_n)), \quad (h_1,\ldots,h_n) \in H^n, \; f \in C^\infty_b(\real^n), \text{ for some }n\geq 1,
\end{equation}
where $W(h):=\int_0^T \dot{h}_s dW_s$ for any $h$ in $H$ and where $C^\infty_b(\real^n)$ denotes the space of bounded mappings which are infinitely continuously differentiable with bounded derivatives. For any $F$ in $\mathcal S$ of the form \eqref{eq:cylindrical}, the Malliavin derivative $\nabla F$ of $F$ is defined as the following $H$-valued random variable:
\begin{equation}
\label{eq:DF}
\nabla F:=\sum_{i=1}^n f_{x_i}(W(h_1),\ldots,W(h_n)) h_i,
\end{equation}
where $f_{x_i}:=\frac{df}{dx_i}$.
It is then customary to identify $\nabla F$ with the stochastic process $(\nabla_t F)_{t\in [0,T]}$. Denote then by $\mathbb{D}^{1,p}$ the closure of $\mathcal{S}$ with respect to the Malliavin-Sobolev semi-norm $\|\cdot\|_{1,p}$, defined as:
$$ \|F\|_{1,p}:=\left(\E\left[|F|^p\right] + \E\left[\|\nabla F\|_{H}^p\right]\right)^{1/p}. $$ We set $\mathbb D^{1,\infty}:= \bigcap_{p\geq 2} \mathbb D^{1,p}$.
We make use of the notation $DF$ to represent the derivative of $\nabla F$ as: 
$$ \nabla_t F=\int_0^t D_s F ds, \quad t\in [0,T]. $$ 
\vspace{0.5em}

\noindent  To avoid any ambiguity in the non-Markovian case we will consider later on, we need to introduce immediately some further notations. For any mapping $\tilde f$ from $[0,T]\times \Omega\times \mathbb R$ into $\mathbb R$,  we let $D\tilde f(t,y)$ be the Malliavin derivative, computed at the point $(t,y)$, of $\omega\longmapsto \tilde f(t,\omega,y)$. If $D\tilde f$ is continuously differentiable with respect to $y$, we denote by $(D\tilde f)_y$ its derivative with respect to $y$. Let now $Y$ be an $\mathbb F$-progressively measurable real process, with $Y_t\in\mathbb D^{1,2}$ at time $t\in [0,T]$. Using the chain rule formula (see for instance \cite{Nualart_Book}), the Malliavin derivative of $D\tilde f$ at $(t,Y_t)$, denoted by $D^2 \tilde f(t,Y_t)$ is given by
\begin{equation}\label{D2:definition:symetrie}D^2_{v,u}\tilde f(t,Y_t):= D_v(D_u \tilde f)(t,Y_t) + (D_u \tilde f)_y(t,Y_t) D_v Y_t,\; 0\leq u,v\leq t.  \end{equation}

\noindent Let $h$ be in $H$ and let $\tau$ be the following shift operator $\tau_{h}:\Omega\longrightarrow\Omega$ defined by
$$\tau_{h}(\omega):=\omega + h, \; \omega\in \Omega.$$
Note that the fact that $h$ belongs to $H$ ensures that $\tau_h$ is a measurable shift on the Wiener space. In the present paper, we will use the characterization of the Malliavin differentiability, as a convergence of a difference quotient in $L^p$, introduced in \cite{MastroliaPossamaiReveillac_MDBSDE}, recalled below. 

\begin{Theorem}[Theorem 4.1 in \cite{MastroliaPossamaiReveillac_MDBSDE}]\label{thm:newcarD12}
Let $p>1$ and $F\in L^p(\real)$. Then
$F$ belongs to $\mathbb D^{1,p}$ if and only if there exists $\mathcal{D}F$ in $L^p(H)$ and there exists $q\in [1,p)$ such that for any $h$ in $H$
$$ \lim\limits_{\varepsilon\to 0} \E\left[ \left| \frac{F\circ \tau_{\varepsilon h}-F}{\varepsilon}-\langle \mathcal{D}F, h\rangle_H\right|^q\right]=0. $$
In that case, $\mathcal D F= \nabla F$.
\end{Theorem}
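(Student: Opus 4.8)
The plan is to prove the two implications separately, the common workhorse being the Cameron--Martin quasi-invariance of $\mathbb P$ under the shift $\tau_{\varepsilon h}$: for every integrable $\Phi$, $\E[\Phi\circ\tau_{\varepsilon h}]=\E[\Phi\,\mathcal E(\varepsilon W(h))_T]$ with $\mathcal E(\varepsilon W(h))_T=\exp(\varepsilon W(h)-\tfrac{\varepsilon^{2}}{2}\|h\|_H^{2})$, the density being a genuine probability density by the classical Cameron--Martin theorem (equivalently, the continuous martingale $\int_0^\cdot\dot h_s\,dW_s$ has $\mathrm{BMO}(\mathbb P)$-norm at most $\|h\|_H^{2}$, so that $\mathcal E(\varepsilon W(h))$ is a uniformly integrable martingale by Theorem \ref{kazamakithm}). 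I would also record at the outset the chain-rule identity $G\circ\tau_{\varepsilon h}-G=\int_0^{\varepsilon}\langle\nabla G,h\rangle_H\circ\tau_{sh}\,ds$, valid for cylindrical $G\in\mathcal S$, which converts difference quotients into averages of shifted Malliavin derivatives.

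For the necessity part (if $F\in\mathbb D^{1,p}$ then the difference quotient converges in $L^{q}$ to $\langle\nabla F,h\rangle_H$ for every $q<p$, in particular for some $q\in[1,p)$), I would first handle a cylindrical $F=f(W(h_1),\dots,W(h_n))$: here $F\circ\tau_{\varepsilon h}=f(W(h_1)+\varepsilon\langle h_1,h\rangle_H,\dots,W(h_n)+\varepsilon\langle h_n,h\rangle_H)$ and a Taylor expansion of $f\in C^\infty_b(\real^{n})$ gives convergence both $\mathbb P$-a.s.\ and, by boundedness of the increments, in every $L^{r}$. The core step is then the uniform estimate $\E[|\varepsilon^{-1}(G\circ\tau_{\varepsilon h}-G)|^{q}]\le C(h,\varepsilon_0,p,q)\,\|G\|_{1,p}^{q}$ for all $\varepsilon\in(0,\varepsilon_0]$ and all $G\in\mathbb D^{1,p}$: starting from the chain-rule identity for cylindrical $G$, I would apply Jensen in $ds$, then the quasi-invariance formula, then H\"older with exponent $p/q>1$ --- the only place the strict gap $q<p$ is used --- exploiting that the $L^{p/(p-q)}$-norms of the densities $\mathcal E(sW(h))_T$ are bounded uniformly for $s\in[0,\varepsilon_0]$ and that $\E[|\langle\nabla G,h\rangle_H|^{p}]\le\|h\|_H^{p}\|G\|_{1,p}^{p}$; the estimate then extends to all of $\mathbb D^{1,p}$ by density. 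Finally, for general $F\in\mathbb D^{1,p}$ I would take cylindrical $F_n\to F$ in $\|\cdot\|_{1,p}$ and split $\varepsilon^{-1}(F\circ\tau_{\varepsilon h}-F)-\langle\nabla F,h\rangle_H$ into the $F_n$-term (tending to $0$ in $L^{q}$ as $\varepsilon\to0$ for fixed $n$), the difference quotient of $F-F_n$ (bounded in $L^{q}$ by $C^{1/q}\|F-F_n\|_{1,p}$, uniformly in $\varepsilon$, via the core estimate) and $\langle\nabla(F_n-F),h\rangle_H$; an $n\to\infty$, then $\varepsilon\to0$ argument concludes, with $\mathcal D F=\nabla F$.

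For the sufficiency part, assuming $\mathcal D F\in L^{p}(H)$ and $q\in[1,p)$ with the stated $L^{q}$-convergence, I would fix $h\in H$ and a cylindrical $G$ and differentiate $\varepsilon\mapsto\E[(F\circ\tau_{\varepsilon h})\,G]$ at $\varepsilon=0$ in two ways. Using the hypothesis and $L^{q}$-convergence against the bounded $G$ gives the value $\E[\langle\mathcal D F,h\rangle_H\,G]$. Using instead the quasi-invariance formula with $\Phi=F\,(G\circ\tau_{-\varepsilon h})$ gives $\E[(F\circ\tau_{\varepsilon h})\,G]=\E[F\,(G\circ\tau_{-\varepsilon h})\,\mathcal E(\varepsilon W(h))_T]$, and I would differentiate under the expectation --- legitimate because $F\in L^{p}$, $G$ and $\nabla G$ are bounded and the relevant difference quotients are uniformly integrable thanks to the exponential moments of $\mathcal E(\varepsilon W(h))_T$ uniform in small $\varepsilon$ --- to obtain $\E[F\,(G\,W(h)-\langle\nabla G,h\rangle_H)]$. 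Equating the two yields the integration-by-parts identity $\E[\langle\mathcal D F,h\rangle_H\,G]=\E[F\,(G\,W(h)-\langle\nabla G,h\rangle_H)]$ for all cylindrical $G$ and all $h\in H$, i.e.\ the weak (duality) characterization of $F\in\mathbb D^{1,p}$ with $\nabla F=\mathcal D F$, invoking that for $p>1$ this weak description of $\mathbb D^{1,p}$ coincides with the closure of $\mathcal S$ under $\|\cdot\|_{1,p}$ (see \cite{Nualart_Book}). An equivalent route would be to show $\nabla(T_tF)=e^{-t}T_t(\mathcal D F)$ for the Ornstein--Uhlenbeck semigroup $(T_t)_{t\ge0}$ via Mehler's formula and the hypothesis, then let $t\downarrow0$ using the closedness of $\nabla$ on $L^{p}$.

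The step I expect to be the main obstacle is the core uniform estimate on the difference quotients (and its mirror image, differentiation under the expectation in the sufficiency direction): everything rests on controlling the Cameron--Martin densities in $L^{p/(p-q)}$ uniformly over small shifts, which is precisely why the strict inequality $q<p$ cannot be dropped and what makes the dominated-convergence and uniform-integrability arguments go through. A secondary, more bookkeeping, point is to invoke correctly the identification of the two descriptions of $\mathbb D^{1,p}$, which deserves care in the range $1<p<2$ where $L^{p}\not\subseteq L^{2}$.
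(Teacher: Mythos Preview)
The paper does not contain its own proof of this statement: Theorem~\ref{thm:newcarD12} is simply quoted from \cite{MastroliaPossamaiReveillac_MDBSDE} as a preliminary tool, with no argument given in the present paper. There is therefore nothing here to compare your proposal against directly.

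That said, your outline is a reasonable and essentially correct strategy. The necessity direction is clean: the chain-rule identity for cylindrical functionals combined with Cameron--Martin quasi-invariance and H\"older (this is precisely where $q<p$ enters, as you note) gives the uniform bound needed for the density argument. For the sufficiency direction, your integration-by-parts route is valid in spirit, but the step you flag as ``bookkeeping'' --- identifying the weak (duality) description of $\mathbb D^{1,p}$ with the closure of $\mathcal S$ under $\|\cdot\|_{1,p}$ --- is in fact the substantive point and is not made fully precise by a bare citation of \cite{Nualart_Book}. The cleanest way to close this, and the one most in line with the literature this paper draws on, is via Sugita's characterisation \cite{sugita}: $F\in\mathbb D^{1,p}$ if and only if $F$ is ray absolutely continuous and stochastically G\^ateaux differentiable with derivative in $L^p(H)$. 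Your hypothesis gives the stochastic G\^ateaux differentiability directly (even in a stronger $L^q$ sense), and ray absolute continuity follows by integrating the difference quotient along $\varepsilon\mapsto F\circ\tau_{\varepsilon h}$ using the $L^q$ convergence together with Cameron--Martin to pass from convergence in law to convergence along a.e.\ ray. This avoids having to invoke the weak-equals-strong identification as a black box.

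Your alternative Ornstein--Uhlenbeck route is also sound and arguably more self-contained, since Mehler's formula plus closedness of $\nabla$ on $L^p$ handles the identification issue internally; either completion would make the sufficiency argument watertight.
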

\noindent We now recall the criterion that we will use to check the absolute continuity of the law of a random variable $F$ with respect to the Lebesgue measure.

\begin{Theorem}[Bouleau-Hirsch Criterion, see e.g. Theorem 2.1.2 in \cite{Nualart_Book}]\label{BH}
Let $F$ be in $\mathbb{D}^{1,p}$ for some $p>1$. Assume that $\|DF\|_{\mathfrak h} >0$, $\mathbb{P}-$a.s. Then $F$ has a probability distribution which is absolutely continuous with respect to the Lebesgue measure on $\mathbb{R}$.
\end{Theorem}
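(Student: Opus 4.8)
The plan is to derive this statement (the Bouleau--Hirsch criterion, recalled from \cite{Nualart_Book}) from the \emph{energy image density property} of the Dirichlet form attached to the Malliavin derivative. Fix $F\in\mathbb{D}^{1,p}$, $p>1$, with $\|DF\|_{\mathfrak h}>0$ $\mathbb{P}$-a.s.; one must show that $\mathbb{P}(F\in B)=0$ for every Borel set $B\subset\real$ with $\lambda(B)=0$. First I would reduce to $F$ bounded: since $\arctan\in C_b^\infty(\real)$, the chain rule gives $\arctan(F)\in\mathbb{D}^{1,p}$ with $D\arctan(F)=(1+F^2)^{-1}DF$, whence $\|D\arctan(F)\|_{\mathfrak h}>0$ a.s.; because $\tan$ is a diffeomorphism of $(-\pi/2,\pi/2)$ onto $\real$ and $\arctan(F)$ takes its values in that interval, absolute continuity of the law of $F$ follows from that of $\arctan(F)$. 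A further routine localisation---writing $F$ as coinciding, on events $\Omega_k\uparrow\Omega$, with bounded elements of $\mathbb{D}^{1,2}$ and invoking the local property of $D$---then lets me assume $F\in\mathbb{D}^{1,2}$ and bounded.

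Next I would record the only analytic input needed on test functions: for $\psi\in C_c^\infty(\real)$ the primitive $\Phi(y):=\int_{-\infty}^y\psi(z)\,dz$ lies in $C_b^\infty(\real)$ (compact support of $\psi$ is what makes $\Phi$ bounded), so by the chain rule $\Phi(F)\in\mathbb{D}^{1,2}$ with $D\Phi(F)=\psi(F)\,DF$. The heart of the matter is the Bouleau--Hirsch energy image density property for the Dirichlet form $\mathcal{E}(G):=\mathbb{E}\big[\|DG\|_{\mathfrak h}^2\big]$ on $\mathbb{D}^{1,2}$: for every $G\in\mathbb{D}^{1,2}$ the image measure $G_*\big(\|DG\|_{\mathfrak h}^2\cdot\mathbb{P}\big)$ is absolutely continuous with respect to $\lambda$. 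Granting this and applying it to $G=F$, the null set $B$ satisfies $\mathbb{E}\big[\|DF\|_{\mathfrak h}^2\,\mathbf{1}_B(F)\big]=F_*\big(\|DF\|_{\mathfrak h}^2\cdot\mathbb{P}\big)(B)=0$, hence $\|DF\|_{\mathfrak h}^2\,\mathbf{1}_B(F)=0$ $\mathbb{P}$-a.s.; since $\|DF\|_{\mathfrak h}^2>0$ a.s., this forces $\mathbf{1}_B(F)=0$ a.s., i.e. $\mathbb{P}(F\in B)=0$, which is the claim.

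It remains to prove the energy image density property, and this is where the real work lies. One fixes a compact $K$ with $\lambda(K)=0$ (inner regularity reduces $B$ to such sets), chooses open sets $U_k\downarrow$ with $K\subset U_k$ and $\lambda(U_k)\to 0$, takes $\psi_k\in C_c^\infty$ with $0\le\psi_k\le 1$, $\psi_k\equiv 1$ near $K$ and $\mathrm{supp}\,\psi_k\subset U_k$, and analyses $D\Phi_k(F)=\psi_k(F)\,DF$ as $k\to\infty$, exploiting the \emph{local} character of $D$ (on a level set $\{G=g\}$ the derivative $DG$ is determined by $G$ there) to pass the identity to the limit without any control on $\psi_k=\Phi_k'$. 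I expect this last step to be the main obstacle: one cannot shortcut it by approximating $\mathbf{1}_B$ directly with Lipschitz functions and letting them decrease, because the Malliavin chain rule for Lipschitz compositions already presupposes absolute continuity of the law of $F$---exactly what is being proved---so the Dirichlet-form/local-property machinery is genuinely required and not merely a convenient packaging.
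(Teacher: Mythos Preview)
The paper does not prove this theorem at all: it is simply \emph{recalled} from \cite{Nualart_Book} (Theorem 2.1.2) as a known tool, with no argument given. There is therefore no ``paper's own proof'' to compare your proposal against.

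That said, your sketch is a reasonable outline of the standard proof (essentially the one found in Nualart's book or in Bouleau--Hirsch's original work): the reduction to bounded $F$ via $\arctan$, the passage to $\mathbb{D}^{1,2}$, and the identification of the energy image density property of the Dirichlet form $\mathcal{E}(G)=\mathbb{E}[\|DG\|_{\mathfrak h}^2]$ as the real content are all correct. You are also right that the last step---establishing the energy image density property itself via the local property of $D$ and an approximation argument---is where the genuine work lies, and that one cannot shortcut it by a naive Lipschitz chain rule. Your proposal is honest about leaving that step as a sketch; to complete it one would indeed have to reproduce the Bouleau--Hirsch argument in full, which is beyond what the paper requires since it treats the criterion as a black box.
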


\noindent Let $F$ such that $\|DF\|_{\mathfrak h} >0$, $\mathbb{P}-$a.s., then the previous criterion implies that the law of $F$ admits a density $\rho_F$ with respect to the Lebesgue measure. Assume that there exists in addition a measurable mapping $\Phi_F$ with $\Phi_F : \mathbb{R}^{\mathfrak h}  \rightarrow \mathfrak h$, such that $DF=\Phi_F(W)$, we then set:
\begin{equation}\label{def:gF}
g_F(x):=\int_0^\infty e^{-u} \mathbb{E}\left[\mathbb{E}^*[\langle \Phi_F(W),\widetilde{\Phi_F^u}(W)\rangle_{\mathfrak h}] \Big{|} F-\mathbb{E}(F)=x\right] du, \ x\in \real,
\end{equation} 
where $\widetilde{\Phi_F^u}(W):=\Phi_F(e^{-u}W+\sqrt{1-e^{-2u}}W^*)$ with $W^*$ an independent copy of $W$ defined on a probability space $(\Omega^*,\mathcal{F}^*,\mathbb{P}^*)$, and $\mathbb{E}^*$ denotes the expectation under $\mathbb{P}^*$ ($\Phi_F$ being extended on $\Omega\times \Omega^*$). We recall the following result from \cite{NourdinViens}.

\begin{Theorem}[Nourdin-Viens' Formula]\label{thm_NourdinViens}
The law of a random variable $F$ has a density $\rho_F$ with the respect to the Lebesgue measure if and only if the random variable $g_F(F-\mathbb{E}[F])$ is positive a.s. In this case, the support of $\rho_F$, denoted by $\text{supp}(\rho_F)$, is a closed interval of $\mathbb{R}$ and for all $x \in \text{supp}(\rho_F)$:
\begin{equation*}
\rho_F(x)=\frac{\mathbb{E}(|F-\mathbb{E}[F]|)}{2g_{F}(x-\E[F])}\exp{\left( -\int_0^{x-\mathbb{E}[F]} \frac{udu}{g_F(u)} \right)}.
\end{equation*}
\end{Theorem}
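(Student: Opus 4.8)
The plan is to reduce at once to the centered case $\E[F]=0$ (the general case following by replacing $F$ with $F-\E[F]$ and translating the argument of $g_F$ by $\E[F]$), and then to exploit the Ornstein--Uhlenbeck structure of the Wiener space; throughout I assume the implicit hypothesis $F\in\mathbb{D}^{1,2}$, which in particular guarantees the existence of the representation $DF=\Phi_F(W)$. Recall the divergence operator $\delta$ adjoint to $D$, the Ornstein--Uhlenbeck operator $L=-\delta D$ with pseudo-inverse $L^{-1}$ on centered functionals, the semigroup $(P_u)_{u\ge 0}$ with $DP_u=e^{-u}P_uD$, and Mehler's representation $P_u(G)(\omega)=\E^*[G(e^{-u}\omega+\sqrt{1-e^{-2u}}\,\omega^*)]$. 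The first key step is the integration-by-parts identity
$$\E[F\,\varphi(F)]=\E\bigl[\varphi'(F)\,\langle DF,-DL^{-1}F\rangle_{\mathfrak h}\bigr],\qquad \varphi\in C^1_b(\real),$$
obtained by writing $F=LL^{-1}F=-\delta DL^{-1}F$, applying the duality $\E[G\,\delta(u)]=\E[\langle DG,u\rangle_{\mathfrak h}]$ with $G=\varphi(F)$, and using the chain rule $D\varphi(F)=\varphi'(F)DF$. The second step identifies the conditional expectation appearing there: from $-L^{-1}F=\int_0^\infty P_uF\,du$ one gets $-DL^{-1}F=\int_0^\infty e^{-u}P_u(DF)\,du$, Mehler's formula gives $P_u(DF)=\widetilde{D^uF}$ in the notation of \eqref{def:gF}, and taking $\E[\,\cdot\mid F]$ together with Fubini yields $\E[\langle DF,-DL^{-1}F\rangle_{\mathfrak h}\mid F]=g_F(F)$ a.s. Hence $\E[F\varphi(F)]=\E[\varphi'(F)\,g_F(F)]$ for all $\varphi\in C^1_b(\real)$.

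For the implication ``$g_F(F)>0$ a.s.\ $\Rightarrow$ $F$ has a density'', I would fix a Borel set $B$ with $\lambda(B)=0$ and apply the identity with $\varphi$ a primitive of a smooth approximation of $\mathbf{1}_B$; approximating $\mathbf{1}_B$ by continuous functions bounded by $1$ and converging $\lambda$-a.e., one passes to the limit by dominated convergence on both sides (the left side because $\int_0^F\mathbf{1}_B=0$) to obtain $\E[\mathbf{1}_B(F)\,g_F(F)]=0$, which forces $\P(F\in B)=0$ since $g_F(F)>0$ a.s.; thus the law of $F$ is absolutely continuous. A preliminary observation streamlining this is that $\nabla F=0$ a.e.\ on every level set $\{F=c\}$, so $g_F(c)=0$ whenever $\P(F=c)>0$; therefore $g_F(F)>0$ a.s.\ already rules out atoms before one enters the approximation.

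For the converse and the explicit formula, I would assume $F$ admits a density $\rho_F$ and plug into the identity functions $\varphi_n$ increasing to $\mathbf{1}_{(x,\infty)}$ with $\varphi_n'$ an approximate identity at $x$: the left-hand side converges to $\phi(x):=\E[F\mathbf{1}_{F>x}]$, while the right-hand side, rewritten as $\int\varphi_n'(y)\,\E[g_F(F)\mid F=y]\,\rho_F(y)\,dy=\int\varphi_n'(y)\,g_F(y)\,\rho_F(y)\,dy$, converges to $g_F(x)\rho_F(x)$ for a.e.\ $x$, so $\phi(x)=g_F(x)\rho_F(x)$. Because $\E[F]=0$, one checks that $\phi(x)>0$ for every $x$ in the interior of $\mathrm{supp}(\rho_F)$ — which is an interval, by the standard connectedness of supports of laws of $\mathbb{D}^{1,2}$ functionals possessing a density — and $\phi$ is finite since $\E[g_F(F)]=\mathrm{Var}(F)<\infty$; this gives $g_F(F)>0$ a.s. Finally, $\phi$ is absolutely continuous with $\phi'(x)=-x\rho_F(x)=-x\,\phi(x)/g_F(x)$, so $(\ln\phi)'(x)=-x/g_F(x)$ and $\phi(x)=\phi(0)\exp\bigl(-\int_0^x u\,du/g_F(u)\bigr)$; since $\E[F]=0$ one has $\phi(0)=\E[F^-]=\tfrac12\E|F|$, and substituting back $\rho_F(x)=\phi(x)/g_F(x)$ gives exactly the announced expression, and then the translation $F\mapsto F-\E[F]$, $x\mapsto x-\E[F]$ delivers the non-centered statement.

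I expect the main obstacle to be the passage from the $C^1_b$ integration-by-parts identity to the non-smooth test functions needed in both implications: one must justify the limiting procedures with no a priori regularity of the law of $F$ (in particular, in the ``$\Rightarrow$'' direction, without knowing beforehand that $F$ has no singular continuous part), which is precisely where the hypothesis $g_F(F)>0$ a.s.\ and the level-set property of $\nabla$ intervene; the careful measure-theoretic bookkeeping of these approximations, together with the Fubini/Mehler justification identifying $g_F(F)$ with $\E[\langle DF,-DL^{-1}F\rangle_{\mathfrak h}\mid F]$, is the technical heart of the proof.
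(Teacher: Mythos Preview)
The paper does not give a proof of this statement: Theorem~\ref{thm_NourdinViens} is simply recalled from \cite{NourdinViens} as a tool, with no argument supplied. So there is no ``paper's own proof'' to compare against.

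That said, your outline is essentially the original Nourdin--Viens argument and is correct in spirit. The reduction to the centered case, the integration-by-parts identity $\E[F\varphi(F)]=\E[\varphi'(F)\langle DF,-DL^{-1}F\rangle_{\mathfrak h}]$ via $F=-\delta DL^{-1}F$ and the chain rule, the identification $\E[\langle DF,-DL^{-1}F\rangle_{\mathfrak h}\mid F]=g_F(F)$ through $-DL^{-1}=\int_0^\infty e^{-u}P_u D\,du$ and Mehler's formula, and the derivation of the explicit density by solving the ODE for $\phi(x)=\E[F\mathbf 1_{F>x}]$ with $\phi(0)=\tfrac12\E|F|$ --- all of this is exactly how the result is proved in \cite{NourdinViens}. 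Your remark that the support is an interval also matches their Corollary~3.3 (itself a consequence of a result of Fang). The one place where you should be a bit more careful is the approximation step in the ``$g_F(F)>0\Rightarrow$ density'' direction: you invoke dominated convergence on both sides, but the left side $\E[F\varphi_n(F)]$ need not be controlled without knowing $F\in L^1$, and the right side requires $g_F(F)\in L^1$ (which you do have, since $\E[g_F(F)]=\mathrm{Var}(F)$). This is handled cleanly in \cite{NourdinViens} and is not a gap in your strategy, only a detail to make explicit.
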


\section{Malliavin differentiability of stochastic Lipschitz BSDEs}\label{section:MDslipsch}
The Malliavin differentiability of solutions to non-Markovian Lipschitz BSDE has been studied first in \cite{ELKPengQuenez} and more recently in \cite{MastroliaPossamaiReveillac_MDBSDE}, as well as in \cite{GeissSteinicke} for L\'evy driven BSDE. In \cite{GeissSteinicke}, the authors use the well-known characterization of the Malliavin derivative as G\^ateaux derivative introduced by Sugita in \cite{sugita} and they obtain similar conditions, for the Brownian part, to those in \cite{ELKPengQuenez} (see \cite[Section 4, $\mathbf{(A_f)}$]{GeissSteinicke}), while \cite{MastroliaPossamaiReveillac_MDBSDE} took the advantage of a new $L^p$ characterization of the Malliavin differentiability (see Theorem \ref{thm:newcarD12}) to improve conditions obtained in \cite{ELKPengQuenez}.\vspace{0.3em}

\noindent Here, we extend the results of \cite{MastroliaPossamaiReveillac_MDBSDE} to the stochastic Lipschitz case. We consider the following non-Markovian BSDE
\begin{equation}\label{chapBioedsrslips}
Y_t=\xi+\int_t^T f(s,Y_s,Z_s) ds -\int_t^T Z_s dW_s, \; \forall t\in [0,T], \; \mathbb P-a.s.
\end{equation}
 where $\xi$ is an $\mathcal{F}_T$-measurable random variable and $f:[0,T]\times\Omega\times\real^2 \longrightarrow \real$ is an $\F$-progressively measurable process where as usual the $\omega$-dependence is omitted.

\subsection{Regularity of BSDE \eqref{chapBioedsrslips}: an approach inspired by \cite{ELKHuang, wang_ran_chen}}\label{sto:lip:1ereapproche}
 We consider the following assumption for $p>\frac12$ and $\beta >0$,
 \paragraph{Assumption (sL$^{p,\beta}$).}\text{ }
 
 \begin{itemize}
 \item[(i)] There exists two nonnegative $\mathbb F$-adapted processes $r$ and $\theta$ such that
 \begin{equation*}
 |f(t,y,z)-f(t,y',z')|\leq r_t |y-y'|+\theta_t |z-z'|, \; \forall (t,y,y',z,z')\in [0,T]\times \mathbb R^4.
 \end{equation*} 
 \item[(ii)] Let $a_t^2:= r_t+|\theta_t|^2$ for any $t\in [0,T]$. We suppose that $a_t^2>0$, $dt\otimes d\mathbb P$-a.e.,  $\mathbb E\left[A_T^a\right]<+\infty$ and
 \begin{equation*}
 \frac{f(t,0,0)}{a_t} \in \mathbb H_{2p,\beta, a}.
 \end{equation*}
 \item[(iii)] $\xi$ satisfies 
 $$\mathbb E\left[ e^{p\beta A^a_T} |\xi|^{2p}\right]<+\infty. $$
 \item[(iv)] If $p\in (\frac 12, 1)$, there exists a positive constant $L$ such that $A^a_T<L,$ $\mathbb P$-a.s.
 \end{itemize}
 \begin{Remark}
Notice that the case $a\equiv 0$ is excluded according to $(ii)$. However, this case can be studied easily since $a\equiv 0$ implies that $f$ is constant with respect to $y$ and $z$. Then, we can provide an explicit expression for the solution to this kind of BSDE.
 \end{Remark}
 The main difficulty in this study is that the process $a$ is not bounded and the stochastic integral of $a$ is not a BMO-martingale under Assumption \textbf{(sL$^{p,\beta}$)}. We recall the following result which can be found in \cite{wang_ran_chen} and extends the results in \cite{ELKHuang}.
 \begin{Theorem}[Theorem 4.1 together with Proposition 3.6 in \cite{wang_ran_chen}]\label{chapBioexistence:slipsch}
Let $p>\frac12$ and $\beta> \text{max } \{2/(2p-1); 3\}$ and assume that Assumption $(\mathbf{sL^{p,\beta}})$ holds. Then BSDE \eqref{chapBioedsrslips} admits a unique solution $(Y,Z)$ in $(\mathbb S_{2p,\beta} \cap \mathbb H_{2p,\beta}^a) \times \mathbb H_{2p,\beta}$. Moreover, 
\begin{itemize}
\item[$(i)$] if $p\geq 1$, there exists a positive constant $C_{p,\beta}$ depending only on $p$ and $\beta$ such that
\begin{equation}\label{chapBioestimatesL}
\| Y\|_{\mathbb S_{2p,\beta, a}}^{2p}+ \| Y\|_{\mathbb H^a_{2p,\beta, a}}^{2p}+\| Z\|_{\mathbb H_{2p,\beta, a}}^{2p}\leq C_{p,\beta}\left(\mathbb E\left[e^{p\beta A^a_T}|\xi|^{2p}\right]+  \left\|\frac{f(t,0,0)}{a_t} \right\|_{\mathbb H_{2p,\beta, a}}^{2p} \right),
\end{equation}
\item[$(ii)$] if $p\in(\frac12, 1)$, there exists a positive constant $C_{p,\beta, L}$ depending only on $p, \beta, L$ such that
Estimate \eqref{chapBioestimatesL} holds with $C_{p,\beta, L}$.
\end{itemize}
 \end{Theorem}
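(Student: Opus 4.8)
The plan is to follow the classical two-step route for BSDEs — an \emph{a priori} estimate followed by a Picard fixed-point argument — but carried out entirely inside the weighted spaces $\mathbb S_{2p,\beta}\cap\mathbb H_{2p,\beta}^a$ and $\mathbb H_{2p,\beta}$, the decisive feature being that the exponential weight $e^{\beta A^a_\cdot}$ compensates for the unboundedness of the stochastic Lipschitz coefficients $r$ and $\theta$. The value $\beta>\max\{2/(2p-1);3\}$ is exactly the one that the constants appearing below (from It\^o's formula, Young's inequality and Burkholder--Davis--Gundy) force upon us.

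\textbf{Step 1 (\emph{a priori} estimate).} Let $(Y,Z)$ solve \eqref{chapBioedsrslips} in the stated spaces. After localisation by a suitable sequence of stopping times, I would apply It\^o's formula to $t\mapsto e^{\beta A^a_t}|Y_t|^2$, which gives
\begin{equation*}
e^{\beta A^a_t}|Y_t|^2+\int_t^T e^{\beta A^a_s}\big(\beta a_s^2|Y_s|^2+|Z_s|^2\big)\,ds=e^{\beta A^a_T}|\xi|^2+2\int_t^T e^{\beta A^a_s}Y_sf(s,Y_s,Z_s)\,ds-2\int_t^T e^{\beta A^a_s}Y_sZ_s\,dW_s.
\end{equation*}
Using $|f(s,Y_s,Z_s)|\leq|f(s,0,0)|+r_s|Y_s|+\theta_s|Z_s|$ together with the Young inequalities $2|Y_s|\,|f(s,0,0)|\leq a_s^2|Y_s|^2+|f(s,0,0)|^2/a_s^2$ (valid because $a_s^2>0$) and $2\theta_s|Y_s|\,|Z_s|\leq2\theta_s^2|Y_s|^2+\tfrac12|Z_s|^2$, the quadratic-in-$Y$ coefficient produced on the right is $a_s^2+2r_s+2\theta_s^2=3a_s^2$, so the choice $\beta>3$ lets $\beta a_s^2|Y_s|^2$ absorb it while still keeping a positive multiple of $\int_t^T e^{\beta A^a_s}(a_s^2|Y_s|^2+|Z_s|^2)\,ds$ on the left. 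Taking expectations — the stochastic integral being a true martingale after the same localisation — yields the $L^2$ form of \eqref{chapBioestimatesL}. For $p\geq1$ one then upgrades to the $L^{2p}$ norms, either by running the same computation on $e^{\beta A^a_t}|Y_t|^{2p}$ or by combining the $L^2$ bound with Doob's and the Burkholder--Davis--Gundy inequalities applied to $\int_0^\cdot e^{\beta A^a_s}Y_sZ_s\,dW_s$ to control $\E\big[\sup_t e^{p\beta A^a_t}|Y_t|^{2p}\big]$; it is precisely this supremum step that requires $\beta>\max\{2/(2p-1);3\}$. When $p\in(\tfrac12,1)$ the map $x\mapsto x^{2p}$ is concave and the iteration no longer runs freely; assumption $(iv)$, $A^a_T<L$ a.s., is then used to trade powers of $e^{\beta A^a_\cdot}$ at the cost of a constant $C_{p,\beta,L}$, recovering \eqref{chapBioestimatesL} in that regime.

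\textbf{Step 2 (existence and uniqueness).} For existence I would set up a Picard iteration on $\mathbb H_{2p,\beta}^a\times\mathbb H_{2p,\beta}$: given $(U,V)$ in this complete space, the Lipschitz bound and assumption $(ii)$ give $|f(s,U_s,V_s)|/a_s\leq|f(s,0,0)|/a_s+a_s(|U_s|+|V_s|)$, so $s\mapsto f(s,U_s,V_s)$ is an admissible driver and the corresponding linear BSDE has the explicit solution $Y_t=\E_t\big[\xi+\int_t^Tf(s,U_s,V_s)\,ds\big]$, with $Z$ read off from the martingale representation of $t\mapsto\E_t\big[\xi+\int_0^Tf(s,U_s,V_s)\,ds\big]$; Step 1 shows $(Y,Z)$ lands in $(\mathbb S_{2p,\beta}\cap\mathbb H_{2p,\beta}^a)\times\mathbb H_{2p,\beta}$. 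Call this map $\Phi$. Applying the computation of Step 1 to the difference of $\Phi(U^1,V^1)$ and $\Phi(U^2,V^2)$ — which solves a BSDE with zero terminal value and driver dominated by $a_s(|\delta U_s|+|\delta V_s|)$ — one obtains $\|\delta Y\|^2_{\mathbb H^a_{2p,\beta}}+\|\delta Z\|^2_{\mathbb H_{2p,\beta}}\leq(C_p/\beta)\big(\|\delta U\|^2_{\mathbb H^a_{2p,\beta}}+\|\delta V\|^2_{\mathbb H_{2p,\beta}}\big)$ (with $C_{p,L}$ in place of $C_p$ when $p<1$), so $\Phi$ is a contraction for the threshold $\beta$ (and if the threshold only gives a Gronwall-type comparison rather than a strict contraction, one instead shows the Picard iterates are Cauchy in the weighted norm, or passes to an equivalent weighted norm). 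Banach's theorem then yields the unique fixed point, i.e.\ the unique solution; uniqueness within the whole class also follows from the \emph{a priori} estimate of Step 1 applied to the difference of two solutions.

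\textbf{Main obstacle.} The real difficulty is the one the paper flags just before the statement: under $(\mathbf{sL^{p,\beta}})$ the martingale $\int_0^\cdot a_s\,dW_s$ need not belong to $\mathrm{BMO}(\mathbb P)$, so none of the standard BMO machinery — reverse H\"older inequalities, energy inequalities, Theorem \ref{kazamakithm} on $\mathcal E(\cdot)$ — can be invoked to control the unbounded coefficients; every estimate must be squeezed out of the integrability hypotheses $\E[A^a_T]<\infty$, $\E[e^{p\beta A^a_T}|\xi|^{2p}]<\infty$ and $f(\cdot,0,0)/a\in\mathbb H_{2p,\beta,a}$ together with the weight $e^{\beta A^a_\cdot}$ alone. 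This is why $\beta$ must exceed an explicit threshold and why the sub-unit range $p\in(\tfrac12,1)$ needs the extra a.s.\ bound $(iv)$. I expect the technically most delicate point to be the passage from the $L^2$ estimate to the $L^{2p}$ estimate with a constant depending only on $p$ and $\beta$ — that is, bounding $\sup_t e^{p\beta A^a_t}|Y_t|^{2p}$ via Burkholder--Davis--Gundy in the absence of any BMO control.
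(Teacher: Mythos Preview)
The paper does not prove this statement at all: it is quoted verbatim as ``Theorem 4.1 together with Proposition 3.6 in \cite{wang_ran_chen}'' and used as a black box throughout. So there is no ``paper's own proof'' to compare against; your proposal is really a sketch of the argument in the cited reference.

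That said, your outline is the correct one and matches the route taken in \cite{wang_ran_chen} (and earlier in \cite{ELKHuang,BenderKohlmann}): It\^o's formula applied to $e^{\beta A^a_t}|Y_t|^2$, absorption of the Lipschitz terms via Young's inequality using $a_s^2=r_s+\theta_s^2$ (which is exactly why the threshold $\beta>3$ appears), then a lift to the $L^{2p}$ norms via BDG and Doob, followed by a Picard contraction in the weighted spaces. Your identification of the main obstacle --- the absence of any BMO control on $\int_0^\cdot a_s\,dW_s$, forcing all estimates to come from the weight alone --- is precisely the point the paper stresses in Section~\ref{Mbio:comparaison} when contrasting this approach with that of \cite{BriandConfortola}. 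The one place where your sketch is slightly loose is the sub-unit range $p\in(\tfrac12,1)$: the role of assumption $(iv)$ is not merely to ``trade powers'' but is genuinely needed because the $L^{2p}$ BDG constants and the concavity of $x\mapsto x^p$ prevent the clean absorption argument from closing; in \cite{wang_ran_chen} this is handled by a separate estimate that uses $A^a_T\leq L$ to bound $e^{\beta A^a_T}$ uniformly, which is why the constant becomes $C_{p,\beta,L}$.
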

 \noindent We now turn to the Malliavin differentiability of solutions to BSDE \eqref{chapBioedsrslips} under Assumption $($\textbf{sL$^{p,\beta}$} $)$. Such a result requires additional assumptions that we now list.

 \paragraph{Assumption $\mathbf{(DsL^{p,\beta}})$.}There exist $p>\frac12$ and $\beta >0$ such that for any $h\in H$,

\vspace{0.5em}
\noindent  $(i)$ $\xi\in \mathbb D^{1,2}$,
 $$\lim\limits_{\varepsilon \to 0} \mathbb E\left[e^{p\beta A^a_T} \left|\frac{\xi\circ\tau_{\varepsilon h}-\xi}{\varepsilon}-\langle \nabla\xi,  h\rangle_{H} \right|^{2p} \right] =0,$$
 and
 $$ \mathbb E\left[e^{\beta A^a_T}  |\langle \nabla\xi,  h\rangle_{H} |^{2} \right] <+\infty.$$
 
\noindent $(ii)$ $\omega\longmapsto f(t,\omega,y,z)\in \mathbb D^{1,2}$ for any $(t,y,z)\in [0,T]\times \mathbb R\times \mathbb R$,
 $$\lim\limits_{\varepsilon \to 0} \left\|\frac{\dfrac{f(t,\omega\circ \tau_{\varepsilon h},Y_t,Z_t)-f(t,\omega, Y_t, Z_t)}{\varepsilon}- \langle \nabla f(t,Y_t, Z_t), h \rangle_H}{a_t}\right\|_{\mathbb H_{2p,\beta, a}}=0$$
 and
 $$ \left\| \frac{\langle \nabla f(t,Y_t, Z_t), h \rangle_H}{a_t}\right\|_{\mathbb H_{2,\beta, a}}<+\infty.$$

\noindent $(iii)$ Let $(\varepsilon_n)_{n\in \mathbb N}$ be a sequence in $(0,1]$ such that $\lim\limits_{n\to +\infty} \varepsilon_n=0$, and let  $(Y^n,Z^n)_n$ be a sequence of random variables which converges in $\mathbb S_{2p,\beta, a}\times \mathbb H_{2p,\beta, a}$, for any $(p,\beta)\in(\frac12,1)\times (0,+\infty)$, to some $(Y,Z)$. Then there exists $\eta>0$ such that for all $h\in H$, the following convergences hold in probability
\begin{align}
\label{chapBioassumpcts1}
&\|f_y(\cdot,\omega+\varepsilon_n h,Y^n_\cdot,Z_\cdot) -f_y(\cdot,\omega,Y_\cdot,Z_\cdot)\|_{L^{2+\eta}([0,T])}\underset{n\to+\infty}{\longrightarrow}0, \nonumber\\
&\underset{t\in [0,T]}{\text{ess sup}} \left| \frac{f_z(\cdot,\omega+\varepsilon_n h,Y^n_\cdot,Z^n_\cdot) -f_z(\cdot,\omega,Y_\cdot,Z_\cdot)}{a_t}\right| \underset{n\to+\infty}{\longrightarrow}0
\end{align}
or
\begin{align}
\label{chapBioassumpcts2}
&\|f_y(\cdot,\omega+\varepsilon_n h,Y^n_\cdot,Z^n_\cdot) -f_y(\cdot,\omega,Y_\cdot,Z_\cdot)\|_{L^{2+\eta}([0,T])}\underset{n\to+\infty}{\longrightarrow}0, \nonumber\\
&\underset{t\in [0,T]}{\text{ess sup}} \left| \frac{f_z(\cdot,\omega+\varepsilon_n h,Y_\cdot,Z^n_\cdot) -f_z(\cdot,\omega,Y_\cdot,Z_\cdot)}{a_t}\right| \underset{n\to+\infty}{\longrightarrow}0.
\end{align}

\noindent $(iv)$ For any $q\geq 1$, $\mathbb E\left[ \left(\int_0^T r_s ds\right)^q\right]<+\infty.$

\begin{Remark}\label{rk:pbDF}
Concerning Property $(ii)$ of Assumption $(\mathbf{DsL^{p,\beta}})$, notice that for fixed $(y,z)$, the process $(s,\omega) \longmapsto Df(s,\omega,y ,z)$ is defined outside a $\mathbb P$-negligible set which depends generally on $(y,z)$. Hence, it is not clear\footnote{This gap was pointed by Laurent Denis, during a review of the PhD thesis of the author, concerning Assumption $(D)$ in \cite{MastroliaPossamaiReveillac_MDBSDE} which corresponds to $(\mathbf{DsL^{p,\beta}})$. Remark \ref{rk:pbDF} has to be also taken into account for the latter paper. } that this process is well-defined at the point $(Y_s(\omega),Z_s(\omega))$. However, under appropriate continuity conditions on the map $(y,z)\longmapsto Df(s,\cdot, y,z)$, these negligible sets can actually be aggregated into a universal one, outside of which $Df(s,Y_s, Z_s)$ is indeed well-defined.\vspace{0.3em}

\noindent Nonetheless, let us point out an alternative approach for which no extra conditions on the Malliavin derivative of $f$ is required. The main problem is that the Malliavin derivative of a random variable is in general only defined $\mathbb P$-a.s. (except for instance when it is a cylindrical random variable), as a limit in probability of a sequence of random variables (which are defined for every $\omega$, again since they are cylindrical functions). There exists however a notion of limit, called medial limits $($lim med for short$)$, which has the particular property that under very general set theoretic axioms (see below), we have the following result (see \textit{e.g.} \cite{Moko}):
\vspace{0.3em}

\noindent Let $(Z_n)$ be a sequence of random variables, then $Z(\omega):= \underset{n\to +\infty}{\text{lim med}}\; Z_n(\omega)$ is universally measurable and if $Z_n$ converges to some random variable $Z^\mathbb P$ in probability, then $Z=Z^\mathbb P$, $\mathbb P$-a.s.\vspace{0.3em}

\noindent In our case, let $F$ be in $\mathbb D^{1,2}$, there exists a sequence of cylindrical elements $F_n$ which converges to $F$ in $\mathbb D^{1,2}$. Hence, $DF^n$ converges in $L^2(H)$ to the Malliavin derivative of $F$ denoted by $DF$, defined $\mathbb P$-a.s. Let $\widetilde{DF}$ be the medial limit of $DF^n$, defined for every $\omega$. By the above result, $DF=\widetilde{DF},\; \mathbb P-a.s.$ \vspace{0.3em}

\noindent This approach, which as far as we know has not been considered in the context of Malliavin calculus before (but see \cite{nutz} for its use for stochastic integrals), allows to give a complete pathwise definition of the Malliavin derivative of any random variable in $\mathbb D^{1,2}$. We emphasize nonetheless that the existence of medial limits depends on set-theoretic framework that one is using for instance Zermelo-Fraenkel set theory, plus the axiom of choice (ZFC for short), and either the continuum hypothesis or Martin's axiom (which is compatible with the negation of the continuum hypothesis). See \textit{e.g.} the footnote in \cite[Remark 4.1]{ptz} for more explanations and the weakest known conditions ensuring the existence of medial limits.
\end{Remark}

\noindent Before going further, we compare these assumptions with those made in \cite{MastroliaPossamaiReveillac_MDBSDE}. Assumptions $\mathbf{(DsL^{p,\beta})}$ $(i)$ and $(ii)$ seem quite reasonable in order to prove that the Malliavin derivatives of $Y_t$ and $Z_t$ are well-defined as the solution in $\mathbb S_{2,\beta, a}\times\mathbb H_{2,\beta, a}$ to the stochastic linear BSDE \eqref{chapBioeq:DY:thm} below, in view of Theorem \ref{chapBioexistence:slipsch}. We now turn to Assumption $\mathbf{(DsL^{p,\beta})}$ $(iii)$ which is less natural and stronger than its equivalent $(H_2)$ in \cite{MastroliaPossamaiReveillac_MDBSDE}. Indeed, if we compare for instance \eqref{chapBioassumpcts1} with its equivalent $(H_2)$ in \cite{MastroliaPossamaiReveillac_MDBSDE}, we first notice that we assume that \textbf{there exists $\boldsymbol{\eta>0}$} such that
$$\|f_y(\cdot,\omega+\varepsilon_n h,Y^n_\cdot,Z_\cdot) -f_y(\cdot,\omega,Y_\cdot,Z_\cdot)\|_{\mathbf{L^{2\boldsymbol{+\eta}}([0,T])}}\underset{n\to+\infty}{\longrightarrow}0,$$ which provide a condition of order strictly more than 2, unlike Assumption $(H_2)$ in \cite{MastroliaPossamaiReveillac_MDBSDE} which deals with an $L^2$ norm. This assumption is necessary for our study and comes in fact directly from the {\it a priori} estimates in Theorem \ref{chapBioexistence:slipsch} (see \cite[Proposition 3.6]{wang_ran_chen}) and the definition of $\mathbb H_{2p,\beta, a}$. We now turn to the second assumption in \eqref{chapBioassumpcts1}. This assumption is quite strong, and is intrinsically linked to the fact $Z\in \mathbb H_{2p,\beta, a}$. Indeed, to obtain \eqref{chapBioeq:passuper} in the proof of the Theorem \ref{chapBiothm:diff:sl} below, we are not able to conclude without this assumption since an H\"older Inequality will provide a term with $Z_s^{2+\eta}$ in the integral and in view of the definition of the space $\mathbb H_{2p,\beta, a}$, we can not prove the convergence. Concerning $(iv)$, this assumption is quite similar to those obtained in the following Section \ref{sto:lip:2emeapproche}, and is satisfied as soon as the stochastic integral of $r$ is for instance a BMO-martingale.\vspace{0.5em}

\noindent We thus have the following theorem.
 
 \begin{Theorem}\label{chapBiothm:diff:sl} Let $p$ be in $\in\left(\frac12, 1\right)$, $\beta> \text{max } \{2/(2p-1); 3\}$ and assume that Assumptions $\mathbf{(sL^{1,\beta})}$ and $\mathbf{(DsL^{p,\beta})}$ hold. Then, for any $t\in [0,T]$, $Y_t\in \mathbb D^{1,2}$ and $Z\in L^2([t,T]; \mathbb D^{1,2}) $. Besides, a version of 
$$(D_u Y_t, D_u Z_t)_{0\leq u\leq t, 0\leq t\leq T},$$ is given as the solution to the affine BSDE:
\begin{align}
\nonumber
D_u Y_t &= D_u \xi +\int_t^T \left(D_u f(s,Y_s,Z_s) +f_y(s,Y_s,Z_s)D_u Y_s+ f_z(s,Y_s,Z_s) D_u Z_s\right)ds \\
\label{chapBioeq:DY:thm}&-\int_t^T D_u Z_s dW_s.
\end{align}
\end{Theorem}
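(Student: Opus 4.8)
The plan is to realise the Malliavin derivatives of $Y_t,Z_t$ as limits, along Cameron--Martin directions $h\in H$, of the difference quotients $\varepsilon^{-1}(Y_t\circ\tau_{\varepsilon h}-Y_t)$ and $\varepsilon^{-1}(Z_t\circ\tau_{\varepsilon h}-Z_t)$, to identify the limit with the solution of a stochastic Lipschitz affine BSDE obtained from \eqref{chapBioedsrslips} by formal differentiation, using the a priori estimates of Theorem~\ref{chapBioexistence:slipsch}, and then to invoke the $L^p$-characterisation of Theorem~\ref{thm:newcarD12}; once $Y_t,Z_t\in\mathbb D^{1,2}$ has been established, \eqref{chapBioeq:DY:thm} follows by differentiating \eqref{chapBioedsrslips} directly. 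So first I would fix $h\in H$ and set up the candidate: since $\mathbf{(sL^{1,\beta})}(i)$ forces $|f_y(t,\cdot)|\le r_t$ and $|f_z(t,\cdot)|\le\theta_t$, the affine BSDE obtained by pairing \eqref{chapBioeq:DY:thm} against $h$ --- terminal condition $\langle\nabla\xi,h\rangle_H$, generator $\langle\nabla f(s,Y_s,Z_s),h\rangle_H+f_y(s,Y_s,Z_s)\,y+f_z(s,Y_s,Z_s)\,z-\dot h_sZ_s$, martingale integrand $z$ --- is stochastic Lipschitz with exactly the same process $a$; Assumption $\mathbf{(DsL^{p,\beta})}(i)$--$(ii)$ (integrability of $\langle\nabla\xi,h\rangle_H$ and of $a_s^{-1}\langle\nabla f(s,Y_s,Z_s),h\rangle_H$), together with $Z\in\mathbb H_{2,\beta,a}$ from Theorem~\ref{chapBioexistence:slipsch} and --- with some care, cf.\ the discussion of obstacles below --- the extra driver $a_s^{-1}\dot h_sZ_s$, make Theorem~\ref{chapBioexistence:slipsch} applicable, so that this BSDE admits a unique solution $(\nabla^hY,\nabla^hZ)$ in $(\mathbb S_{2,\beta,a}\cap\mathbb H^a_{2,\beta,a})\times\mathbb H_{2,\beta,a}$; the $\mathbb P$-null sets on which $\omega\mapsto Df(s,\omega,\cdot,\cdot)$ is defined are aggregated as in Remark~\ref{rk:pbDF}.

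Next I would write the difference-quotient equation and pass to the limit. By the Cameron--Martin formula together with the shift rule for stochastic integrals (legitimate because $h\in H$), $(Y\circ\tau_{\varepsilon h},Z\circ\tau_{\varepsilon h})$ solves \eqref{chapBioedsrslips} with data $\xi\circ\tau_{\varepsilon h}$ and generator $(s,\omega,y,z)\mapsto f(s,\omega+\varepsilon h,y,z)-\varepsilon\dot h_sz$; applying \eqref{chapBioestimatesL} to this shifted data, together with equivalence of the shifted law with $\mathbb P$, one checks that $Y\circ\tau_{\varepsilon h}\to Y$ and $Z\circ\tau_{\varepsilon h}\to Z$ in $\mathbb S_{2p,\beta,a}\times\mathbb H_{2p,\beta,a}$ as $\varepsilon\to0$. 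Subtracting the shifted equation from the unshifted one, dividing by $\varepsilon$, and Taylor-expanding $f$ in $(y,z)$ (licit because stochastic Lipschitzness and the differentiability hypotheses make $(y,z)\mapsto f(s,\cdot,y,z)$ of class $C^1$), the pair $U^\varepsilon:=\varepsilon^{-1}(Y\circ\tau_{\varepsilon h}-Y)$, $V^\varepsilon:=\varepsilon^{-1}(Z\circ\tau_{\varepsilon h}-Z)$ solves an affine BSDE with terminal value $\varepsilon^{-1}(\xi\circ\tau_{\varepsilon h}-\xi)$, coefficients $f_y,f_z$ evaluated at $\omega+\varepsilon h$ and at intermediate points between the shifted and unshifted $(Y_s,Z_s)$, and free term $\varepsilon^{-1}\big(f(s,\omega+\varepsilon h,Y_s,Z_s)-f(s,\omega,Y_s,Z_s)\big)-\dot h_s\,Z_s\circ\tau_{\varepsilon h}$. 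Applying \eqref{chapBioestimatesL} (for $p\in(\frac12,1)$, where $\mathbf{(DsL^{p,\beta})}(iv)$ controls the moments of $\int_0^T r_s\,ds$ entering the constant) to the affine BSDE satisfied by the \emph{difference} $(U^\varepsilon-\nabla^hY,V^\varepsilon-\nabla^hZ)$ bounds $\|U^\varepsilon-\nabla^hY\|_{\mathbb S_{2p,\beta,a}}+\|V^\varepsilon-\nabla^hZ\|_{\mathbb H_{2p,\beta,a}}$ by the distance between the respective data --- the terms $\varepsilon^{-1}(\xi\circ\tau_{\varepsilon h}-\xi)-\langle\nabla\xi,h\rangle_H$, $a_s^{-1}\big(\varepsilon^{-1}(f(s,\omega+\varepsilon h,Y_s,Z_s)-f(s,\omega,Y_s,Z_s))-\langle\nabla f(s,Y_s,Z_s),h\rangle_H\big)$, $a_s^{-1}\dot h_s(Z_s\circ\tau_{\varepsilon h}-Z_s)$, $a_s^{-1}(f_y(\cdot,\omega+\varepsilon h,\cdot)-f_y)\nabla^hY_s$, and $a_s^{-1}(f_z(\cdot,\omega+\varepsilon h,\cdot)-f_z)\nabla^hZ_s$ --- each of which vanishes in the limit by $\mathbf{(DsL^{p,\beta})}(i)$--$(iii)$ and the convergence just recorded. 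Hence $U^\varepsilon\to\nabla^hY$ and $V^\varepsilon\to\nabla^hZ$ in these weighted norms, and therefore in $L^q$ for a suitable $q<2$ by \eqref{inegalite:norme} and H\"older's inequality.

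Finally I would conclude. Since $h\mapsto\nabla^hY_t$ is linear and, by the previous estimate, bounded from $H$ into $L^q(\real)$, Riesz representation gives $\mathcal DY_t\in L^q(H)$ with $\nabla^hY_t=\langle\mathcal DY_t,h\rangle_H$ for every $h$, and likewise $\mathcal DZ_t$ for $Z$; Theorem~\ref{thm:newcarD12} then yields $Y_t\in\mathbb D^{1,2}$ for each $t$, $Z_t\in\mathbb D^{1,2}$ for a.e.\ $t$ with $Z\in L^2([t,T];\mathbb D^{1,2})$ after integrating the $\mathbb H_{2,\beta,a}$-bound in $t$, and $\nabla Y_t=\mathcal DY_t$, $\nabla Z_t=\mathcal DZ_t$. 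With $Y_t,Z_t\in\mathbb D^{1,2}$ and $(u,s)\mapsto D_uZ_s$ square-integrable in hand, applying $D_u$ to \eqref{chapBioedsrslips} --- the chain rule for $D_uf(s,Y_s,Z_s)$ (with the proviso of Remark~\ref{rk:pbDF}), commutation of $D_u$ with the Lebesgue integral, and $D_u\int_t^TZ_sdW_s=\int_t^TD_uZ_sdW_s$ for $u<t$ --- produces \eqref{chapBioeq:DY:thm} for $0\le u\le t\le T$; the extra drift $-\dot h_sZ_s$ in the directional BSDE of the first step is reabsorbed through the diagonal identity $D_tY_t=Z_t$, $dt$-a.e., which serves here as a consistency check rather than a step of the proof.

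\medskip
\noindent\textbf{Main obstacle.} The crux is the passage to the limit. Because the stochastic Lipschitz setting rules out BMO arguments ($\int_0^\cdot a_s\,dW_s$ need not be a BMO-martingale under $\mathbf{(sL^{p,\beta})}$), every error term must be estimated in the weighted $\beta$-spaces; in particular the $f_z$-difference contribution, after a H\"older inequality, produces an integrand of the type $|\nabla^hZ_s|^2$ (resp.\ $|V^\varepsilon_s|^2$) times $|f_z(\cdot,\omega+\varepsilon_nh,\cdot)-f_z(\cdot,\omega,\cdot)|^2$ which cannot be controlled in $L^2$ --- this is exactly what forces the essential-supremum form of the $f_z/a$ hypothesis, and the $L^{2+\eta}$ (rather than $L^2$) form of the $f_y$ hypothesis, in \eqref{chapBioassumpcts1}--\eqref{chapBioassumpcts2}, since Theorem~\ref{chapBioexistence:slipsch} controls $Z$ only in $\mathbb H_{2p,\beta,a}$ with $p$ possibly below $1$. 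A secondary delicate point is that the driver $\dot h_sZ_s$ must lie in $\mathbb H_{2,\beta,a}$, i.e.\ one must control $Z/a$; this may require first treating $h$ with bounded $\dot h$ and then passing to general $h\in H$ by density.
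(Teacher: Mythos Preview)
Your proposal follows essentially the same route as the paper: set up the candidate affine BSDE $(\tilde Y^h,\tilde Z^h)$, apply the a~priori estimate \eqref{chapBioestimatesL} of Theorem~\ref{chapBioexistence:slipsch} to the difference $(Y^\varepsilon-\tilde Y^h,\,Z^\varepsilon-\tilde Z^h)$, drive each remainder to zero using $\mathbf{(DsL^{p,\beta})}(i)$--$(iii)$ together with a de~La~Vall\'ee--Poussin uniform-integrability argument, and conclude via Theorem~\ref{thm:newcarD12}. The only substantive divergence is that you track the drift correction $-\dot h_s Z_s$ coming from the shift of the stochastic integral, whereas the paper (following \cite{MastroliaPossamaiReveillac_MDBSDE}) writes both the shifted BSDE and the candidate \eqref{chapBioeq:DY,h} \emph{without} this term; consequently the $Z/a\in\mathbb H_{2,\beta,a}$ obstacle you flag at the end never arises in the paper's formulation.
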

 \begin{proof}  We only consider the case where \eqref{chapBioassumpcts1} holds under Assumption $\mathbf{(DsL^{p,\beta}})$ $(iii)$, since the other situation can be treated similarly. We aim at applying Theorem \ref{thm:newcarD12} with $F=Y_t$ and $F=\int_t^T Z_s dW_s$. The proof is similar to the proof of Theorem 5.1 in \cite{MastroliaPossamaiReveillac_MDBSDE} and we recall here the main ideas. Let $\varepsilon >0$, $h\in H$ and $p\in \left(\frac12, 1\right)$. We have 
 $$ Y_s\circ \tau_{\varepsilon h} =\xi\circ\tau_{\varepsilon h} +\int_s^T f(r,Y_r,Z_r) \circ\tau_{\varepsilon h} dr -\int_s^T Z_r\circ\tau_{\varepsilon h} dW_r, \quad \forall s \in [t,T], \; \P_0-a.s.  $$
As a consequence, setting for simplicity
$$Y_s^\varepsilon:=\frac1\varepsilon (Y_s\circ \tau_{\varepsilon h}-Y_s),\quad Z_s^\varepsilon:=\frac1\varepsilon (Z_s\circ \tau_{\varepsilon h}-Z_s), \quad \xi^\varepsilon:=\frac1\varepsilon (\xi\circ \tau_{\varepsilon h}-\xi), \; s \in [t,T],$$ 
we have that $(Y^\varepsilon, Z^\varepsilon)$ solves the BSDE:
\begin{equation}\label{chapBioedsr_accroissement}
Y_s^\varepsilon=\xi^\varepsilon+ \int_s^T\left( \tilde{A}_r^\varepsilon+\tilde{A}_r^{y,\varepsilon} Y_r^\varepsilon+\tilde{A}_r^{z,\varepsilon} Z_r^\varepsilon\right) dr-\int_s^T Z_r^\varepsilon dW_r,
\end{equation}
with
\begin{align*}
    &\tilde{A}_r^{y,\varepsilon}:=\int_0^1 f_y(r,\cdot+\varepsilon h, Y_r+\theta (Y_r\circ \tau_{\varepsilon h}-Y_r), Z_r)d\theta,\\
  &\tilde{A}_r^{z,\varepsilon}:= \int_0^1f_z(r,\cdot+\varepsilon h, Y_r\circ \tau_{\varepsilon h}, Z_r+\theta (Z_r\circ \tau_{\varepsilon h}-Z_r)) d\theta,\\
  & \tilde{A}_r^\varepsilon:=\frac1\varepsilon (f(r,\cdot+\varepsilon h, Y_r, Z_r)-f(r,\cdot,Y_r,Z_r)).
 \end{align*}

\vspace{0.5em}
\noindent Let us now consider the following stochastic affine BSDE on $[t,T]$, which admits a unique solution $(\tilde{Y}^h, \tilde{Z}^h)\in (\mathbb S_{2,\beta,a} \cap \mathbb H_{2,\beta, a}^a) \times \mathbb H_{2,\beta, a}$ according to Theorem \ref{chapBioexistence:slipsch} under Assumption $\mathbf{(DsL^{p,\beta}})$
\begin{align}
\label{chapBioeq:DY,h}
\nonumber \tilde{Y}^h_s&=\langle D\xi,\dot h\rangle_{L^2([0,T])} -\int_s^T \tilde{Z}_r^h dW_r\\
&\hspace{1em}+\int_s^T \left(\langle Df(r,Y_r,Z_r),\dot h\rangle_{L^2([0,T])}+\tilde{Y}_r^hf_y(r,Y_r,Z_r)+\tilde{Z}_r^hf_z(r,Y_r,Z_r) \right)dr.
\end{align}
Hence, using Theorem \ref{chapBioexistence:slipsch} together with Inequality \eqref{inegalite:norme}, we obtain
\begin{align*}
\| Y^\varepsilon- \tilde{Y}^h\|_{\mathcal S^{2p}}^{2p}+\|Z^\varepsilon-\tilde{Z}^h \|_{\mathcal H^{2p} }^{2p}&\leq \| Y^\varepsilon- \tilde{Y}^h\|_{\mathbb S_{2p,\beta, a}}^{2p}+\|Z^\varepsilon-\tilde{Z}^h \|_{\mathbb H_{2p,\beta, a} }^{2p} \\
&\leq C_{1,\beta}\left( \Xi^{p,a,\beta}_\varepsilon +X^{\varepsilon}_T+X^{y,\varepsilon}_T+ X^{z,\varepsilon}_T\right)
\end{align*}
where
\begin{align*}
\Xi^{p,a,\beta}_\varepsilon&:= \mathbb E\left[ e^{p\beta A^a_T} \left|\xi^\varepsilon-\langle \nabla\xi,  h\rangle_{H} \right|^{2p}\right],\ X^\varepsilon_T:=\left\|\frac{\tilde A_t^\varepsilon- \langle \nabla f(t,Y_t, Z_t), h \rangle_H}{a_t}\right\|^{2p}_{\mathbb H_{2p,\beta, a}},\\
X^{y,\varepsilon}_T&:= \left\|\tilde{Y}^h_t\frac{\tilde A_t^{y,\varepsilon}- f_y(t,Y_t,Z_t)}{a_t}\right\|^{2p}_{\mathbb H_{2p,\beta, a}},\ X^{z,\varepsilon}_T:=\left\|\tilde{Z}^h_t\frac{\tilde A_t^{z,\varepsilon}- f_z(t,Y_t,Z_t)}{a_t}\right\|^{2p}_{\mathbb H_{2p,\beta, a}}.
\end{align*}
First notice that under Assumption $\mathbf{(DsL^{p,\beta}})$ $(i)$ and $(ii)$, we have 
\begin{equation}\label{chapBioestimation1}
 \lim\limits_{\varepsilon \to 0}\left(\Xi_\varepsilon^{p,a,\beta} +X^{\varepsilon}_T\right) =0.
\end{equation}
We now turn to $X_T^{y,\varepsilon}$. We have
\begin{align*}
X_T^{y,\varepsilon}&=\mathbb E\left[ \left( \int_0^T e^{\beta A^a_t} |\tilde{Y}^h_t|^2\left|\frac{ A_t^{y,\varepsilon}- f_y(t,Y_t,Z_t)}{a_t} \right|^2 dt\right)^p \right].
\end{align*}
According to Assumption $\mathbf{(DsL^{p,\beta}})$ $(iii)$, there exists $\eta>0$ such that
$$\left\| \frac{ A_t^{y,\varepsilon}- f_y(t,Y_t,Z_t)}{a_t}\right\|_{L^{2+\eta}([0,T])}^{2+\eta}=\int_0^T \left|\frac{ A_t^{y,\varepsilon}- f_y(t,Y_t,Z_t)}{a_t}\right|^{2+\eta}dt\overset{\text{ proba }}{ \underset{\varepsilon \to 0}{\longrightarrow} }0.$$
Hence, using H\"older Inequality with $q>1$ such that $2q=2+\eta$ and denoting by $\overline q$ its conjugate and using the fact that $\tilde{Y}^h\in \mathbb S_{2,\beta, a}$, we have for some constant $C>0$
\begin{align*}
& \int_0^T e^{\beta A^a_t} |\tilde{Y}^h_t|^2\left|\frac{ A_t^{y,\varepsilon}- f_y(t,Y_t,Z_t)}{a_t} \right|^2 dt\\
&  \leq C\left( \int_0^T e^{\overline q\beta A^a_t} |\tilde{Y}^h_t|^{2\overline q} dt \right)^{1/\overline q}  \left\| \frac{ A_t^{y,\varepsilon}- f_y(t,Y_t,Z_t)}{a_t}\right\|_{L^{2+\eta}([0,T])}\\
&\leq C \| \tilde{Y}^h\|_{\mathbb S_{2,\beta}}^2  \left\| \frac{ A_t^{y,\varepsilon}- f_y(t,Y_t,Z_t)}{a_t}\right\|_{L^{2+\eta}([0,T])}\\
&\underset{\varepsilon \to 0 }\longrightarrow 0, \text{ in probability}.
\end{align*}
Now, let $\eta>0$ small enough such that $2(p+\eta)\in (1,2)$. Then, by noticing that there exists a positive constant $c$, such that $\left|\frac{ A_t^{y,\varepsilon}- f_y(t,Y_t,Z_t)}{a_t} \right|^2 \leq c r_t$, since $|f_y(t,y,z)|\leq r_t$ for any $(t,y,z)\in [0,T]\times \mathbb R^2$ and from $(iv)$, there exists a positive constant $C$ such that
\begin{align*}
&\underset{\varepsilon \in (0,1)}\sup \mathbb E\left[ \left( \int_0^T e^{\beta A^a_t} |\tilde{Y}^h_t|^2\left|\frac{ A_t^{y,\varepsilon}- f_y(t,Y_t,Z_t)}{a_t} \right|^2 dt\right)^{p+\eta} \right]\\
&\leq  C \mathbb E\left[ \underset{t\in [0,T]}{\sup} e^{(p+\eta)\beta A^a_t} |\tilde{Y}^h_t|^{2(p+\eta)}\right]\\
&<+\infty,
\end{align*}
since $2(p+\eta)<2$ and $\tilde{Y}^h\in \mathbb S_{2,\beta, a}$. Hence, using de La Vall\'ee-Poussin Criterion, we deduce that the family of random variables 
$$\left\{\left( \int_0^T e^{\beta A^a_t} |\tilde{Y}^h_t|^2\left|\frac{ A_t^{y,\varepsilon}- f_y(t,Y_t,Z_t)}{a_t} \right|^2 dt\right)^{p}\right\},  \; \varepsilon\in (0,1),$$ is uniformly integrable. Hence, by the dominated convergence theorem, we deduce that
\begin{equation}\label{chapBioestimation2}
X_T^{y,\varepsilon} \underset{\varepsilon \to 0}{\longrightarrow} 0.
\end{equation}
We now turn to $X_T^{z,\varepsilon}$. By proceeding similarly, we have
\begin{align*}
X_T^{z,\varepsilon}&=\mathbb E\left[ \left( \int_0^T e^{\beta A^a_t} |\tilde{Z}^h_t|^2\left|\frac{ A_t^{z,\varepsilon}- f_z(t,Y_t,Z_t)}{a_t} \right|^2 dt\right)^p \right].
\end{align*}
According to Assumption $\mathbf{(DsL^{p,\beta}})$ $(iii)$ and using the fact that $\tilde{Z}^h\in \mathbb H_{2,\beta, a}$ we know that for any $t\in [0,T]$
\begin{equation}
\label{chapBioeq:passuper} \int_0^T e^{\beta A^a_t} |\tilde{Z}^h_t|^2\left|\frac{ A_t^{z,\varepsilon}- f_z(t,Y_t,Z_t)}{a_t} \right|^2 dt\overset{\text{ proba }}{ \underset{\varepsilon \to 0}{\longrightarrow} }0.
\end{equation}
Let $\eta>0$ small enough such that $2(p+\eta)\in (1,2)$. Then, we can show similarly that there exists a positive constant $C$ such that
\begin{align*}
&\underset{\varepsilon \in (0,1)}\sup \mathbb E\left[ \left( \int_0^T e^{\beta A^a_t} |\tilde{Z}^h_t|^2\left|\frac{ A_t^{z,\varepsilon}- f_z(t,Y_t,Z_t)}{a_t} \right|^2 dt\right)^{p+\eta} \right]\\
&\leq  C \mathbb E\left[  \left( \int_0^T e^{\beta A^a_t} |\tilde{Z}^h_t|^2 dt\right)^{p+\eta}\right]\\
&<+\infty,
\end{align*}
since $2(p+\eta)<2$ and $\tilde{Z}^h\in \mathbb H_{2,\beta, a}$. Hence, using de La Vall\'ee-Poussin Criterion, \eqref{chapBioeq:passuper} and the dominated convergence theorem, we deduce that
\begin{equation}\label{chapBioestimation3}
X_T^{z,\varepsilon} \underset{\varepsilon \to 0}{\longrightarrow} 0.
\end{equation}
Finally, from \eqref{chapBioestimation1}, \eqref{chapBioestimation2} and \eqref{chapBioestimation3}, we thus obtain for $p\in \left(\frac12,1 \right)$
$$\| Y^\varepsilon- \tilde{Y}^h\|_{\mathcal S^{2p}}^{2p}+\|Z^\varepsilon-\tilde{Z}^h \|_{\mathcal H^{2p} }^{2p}\underset{\varepsilon \to 0}{\longrightarrow} 0.$$
The rest of the proof is then similar to the proof of Theorem 5.1 in \cite{MastroliaPossamaiReveillac_MDBSDE} and by applying Theorem \ref{thm:newcarD12}, we deduce that $Y_t\in \mathbb D^{1,2}$ and using \cite[Lemma 2.3]{PardouxPeng_BSDEQuasilinearParabolicPDE}, one shows that $Z$ belongs to $L^2([t,T]; \mathbb D^{1,2})$. Besides, we can prove that a version of 
$$(D_u Y_t, D_u Z_t)_{0\leq u\leq t, 0\leq t\leq T},$$ is given as the solution to the affine BSDE:
\begin{align*}
\nonumber D_u Y_t& = D_u \xi +\int_t^T \left(D_u f(s,Y_s,Z_s) +f_y(s,Y_s,Z_s)D_uY_s+ f_z(s,Y_s,Z_s) D_u Z_s\right)ds\\
& -\int_t^T D_u Z_s dW_s,
\end{align*}
which admits, from Assumption $\mathbf{(DsL^{p,\beta}})$ and Theorem \ref{chapBioexistence:slipsch}, or \cite{ELKHuang}, a unique solution in $\mathbb S_{2,\beta, a}\times \mathbb H_{2,\beta, a}$.
 \end{proof}
 
 \subsection{Regularity of BSDE \eqref{chapBioedsrslips}: an approach inspired by \cite{AIR, BriandConfortola}}\label{sto:lip:2emeapproche} In this section, we will study the Malliavin differentiability of BSDE \eqref{chapBioedsrslips} in the stochastic Lipschitz case by using the theory developed in \cite{BriandConfortola}. A similar theory, using the BMO theory was also developed in \cite{AIR} but for particular stochastic Lipschitz BSDE (see BSDE (16) in \cite[Section 4]{AIR}). We recall Assumptions A1. and A2. from \cite{BriandConfortola}.

\vspace{0.5em}
\noindent $\mathbf{(BC_1)}$ Assume that there exists a real predictable process $K$ bounded from below by $1$ and a constant $\alpha\in (0,1)$ such that
 \begin{itemize}
 \item[$(i)$] For each $t\in [0,T]$, $(y,z)\longmapsto f(t,y,z)$ is continuous,
 \item[$(ii)$] For any $(t,y,y',z,z')\in [0,T]\times \mathbb R^2\times (L^2([0,T]))^2 $ ,
 $$(y-y')(f(t,y,z)-f(t,y',z))\leq K_t^{2\alpha}|y-y'|^2,$$
 and
 $$|f(t,y,z)-f(t,y,z')|\leq K_t\|z-z'\|_{L^2([0,T])}. $$
 \item[$(iii)$] There exists a constant $C>0$ such that for any stopping time $\tau\leq T$:
 $$\mathbb E\left[ \int_\tau^T |K_s|^2 ds \Big| \mathcal F_\tau\right]\leq C^2. $$ 
 \noindent We denote by $N$ the smallest constant $C$ which satisfies this statement.
 \end{itemize}

\noindent Notice that if the previous Assumption $\mathbf{(BC_1)}$ $(iii)$ is satisfied then for any $u\in L^2([0,T])$ with $1=\| u\|_{L^2([0,T])}$, $\left(M_t:= \int_0^t K_s u_sdW_s\right)_{t\in [0,T]}$ is a BMO-martingale and $\|M \|_{\text{BMO}}=N$.
Let now
$$\Phi(p):= \left( 1+\frac{1}{p^2} \log\left( \frac{2p-1}{2(p-1)}\right)\right)^{\frac12}-1, $$
and $q^\star$ such that $\Phi(q^\star)=N$. We then defined $p_\star$ the conjugate of $q^\star$, defined by
$$ \frac1{q^\star}+ \frac1{p_\star}=1.$$ We now recall Assumption A3. and A4. of \cite{BriandConfortola}.

\vspace{0.5em}
\noindent $\mathbf{(BC_2)}$ There exists $p^\star>p_\star>1$ such that 
 $$\mathbb E\left[|\xi|^{p^\star}+\left(\int_0^T |f(s,0,0)| ds\right)^{p^\star} \right]<+\infty. $$
$\mathbf{(BC_3)}$ There exists a non negative predictable process $F$ such that 
 $$\mathbb E\left[\left(\int_0^T |F_s| ds\right)^{p^\star} \right]<+\infty, $$
 and
 $$ \forall (t,y,z)\in [0,T]\times \mathbb R\times \mathbb R, \; |f(t,y,z)|\leq F_t +K_t^{2\alpha}|y|+K_t|z|, \,\mathbb P-a.s.$$
 Then, we have the following {\it a priori} estimates for solutions to BSDE \eqref{chapBioedsrslips}.
 \begin{Theorem}[see Corollary 3.4 and Theorem 3.5 in \cite{BriandConfortola}]\label{Mbio:thm:apriori:BC}
 Assume that Assumptions $\mathbf{(BC_1)},$ $\mathbf{(BC_2)}$ and $\mathbf{(BC_3)}$ hold. Then, BSDE \eqref{chapBioedsrslips} admits a unique solution $(Y,Z)\in \mathcal S_p\times \mathcal H_p$ for any $p<p^\star$. Besides, for each $p\in (p_\star,p^\star)$,
 \begin{align}
\nonumber \| Y\|_{\mathcal S_p}+\| Z\|_{\mathcal H_p} &\leq C\mathbb E\left[|\xi|^{p^\star}+\left(\int_0^T |f(s,0,0)| ds\right)^{p^\star} \right]^\frac1{p^\star}\\
\label{apriori:briandconfortola} &\hspace{1em}\times \left( 1+\mathbb E\left[\left( \int_0^T K_s^{2\alpha}+K_s^2 ds\right)^\frac{P^\star}{2} \right]^{\frac{1}{P^\star}}\right),
 \end{align} 
 where $P^\star=p(p^\star+p)/(p^\star-p)$ and $C$ is a positive constant.
 \end{Theorem}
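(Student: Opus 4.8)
The statement is taken verbatim from \cite{BriandConfortola}, so a complete argument would essentially reproduce theirs; I only outline the route I would follow.

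The first task is the \emph{a priori} bound \eqref{apriori:briandconfortola}, assuming a solution $(Y,Z)$ is at hand. The plan is to run the $L^p$ machinery for monotone generators, adapted to the stochastic Lipschitz coefficient $K$. Applying It\^o's formula to $(|Y_t|^2+\delta)^{p/2}$ and letting $\delta\downarrow 0$, then using the monotonicity inequality $\mathbf{(BC_1)}(ii)$ together with the growth bound $\mathbf{(BC_3)}$, one reaches an inequality of the form
\[
|Y_t|^p + c_p\int_t^T \mathbf{1}_{\{Y_s\neq 0\}}|Y_s|^{p-2}|Z_s|^2\,ds \leq |\xi|^p + C\int_t^T |Y_s|^{p-1}\big(F_s + K_s^{2\alpha}|Y_s| + K_s|Z_s|\big)\,ds + \mathcal{M}_{t,T},
\]
where $c_p>0$ depends on $p$ and $\mathcal{M}_{t,T}$ is a local martingale term.

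The delicate contribution is $\int_t^T K_s|Y_s|^{p-1}|Z_s|\,ds$. Rather than absorbing it into the $|Z_s|^2$ term (which is awkward for $p\in(1,2)$), I would linearise: write $f(s,Y_s,Z_s)=f(s,Y_s,0)+\lambda_s Z_s$ with $|\lambda_s|\le K_s$ and carry out the Girsanov change of measure with density $\mathcal{E}\big(\int_0^\cdot \lambda_s\,dW_s\big)_T$. By $\mathbf{(BC_1)}(iii)$ the martingale $\int_0^\cdot \lambda_s\,dW_s$ is BMO with norm at most $N$, so by Theorem~\ref{kazamakithm} its stochastic exponential is a uniformly integrable martingale; more precisely, a reverse H\"older inequality holds, the density lying in $L^r$ for every $r<q^\star$ with $q^\star$ the Kazamaki exponent solving $\Phi(q^\star)=N$. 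Under the new measure $\mathbb{Q}$ the equation has the purely monotone generator $y\mapsto f(s,y,0)$, satisfying $|f(s,y,0)|\le F_s+K_s^{2\alpha}|y|$, to which the standard $L^p$-type estimates for monotone BSDEs apply; a H\"older inequality then brings the bound back under $\mathbb{P}$, at the price of a power of the Girsanov density. Keeping the conjugate exponents below $q^\star$ throughout is exactly what forces $p>p_\star$, while $p<p^\star$ makes $|\xi|^p$ and $\big(\int_0^T F_s\,ds\big)^p$ integrable by $\mathbf{(BC_2)}$ and $\mathbf{(BC_3)}$; the precise bookkeeping produces the factor $\big(1+\mathbb{E}\big[\big(\int_0^T K_s^{2\alpha}+K_s^2\,ds\big)^{P^\star/2}\big]^{1/P^\star}\big)$ with $P^\star=p(p^\star+p)/(p^\star-p)$. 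The estimate on $\|Z\|_{\mathcal{H}_p}$ follows from the Burkholder--Davis--Gundy inequality applied to the martingale part of \eqref{chapBioedsrslips}.

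Existence and uniqueness are then classical consequences. For existence, truncate and mollify $f$ in $y$ to obtain generators that are globally Lipschitz, for which $L^p$ well-posedness is known; the bounds on $(F,K,\xi)$ are stable under this procedure, so the \emph{a priori} estimate above holds uniformly along the approximating sequence $(Y^n,Z^n)$, and a parallel It\^o computation on the differences yields a stability bound showing that $(Y^n,Z^n)$ is Cauchy in $\mathcal{S}_p\times\mathcal{H}_p$; its limit solves \eqref{chapBioedsrslips}. For uniqueness, the difference of two solutions solves a BSDE with zero terminal condition, generator monotone in the $y$-increment and stochastic Lipschitz in the $z$-increment, and vanishing free term, so \eqref{apriori:briandconfortola} gives $\|Y-Y'\|_{\mathcal{S}_p}+\|Z-Z'\|_{\mathcal{H}_p}=0$. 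The genuine difficulty throughout is not the It\^o computations but the bookkeeping of integrability exponents around the change of measure: because $K$ is unbounded, the Girsanov density is only $L^r$-integrable for $r$ strictly below the Kazamaki threshold $q^\star=\Phi^{-1}(N)$, so every H\"older splitting must be arranged to respect this, and extracting the clean constant with the explicit $P^\star$ requires choosing the auxiliary exponents optimally --- this is the heart of \cite{BriandConfortola}.
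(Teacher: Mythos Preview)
The paper does not prove this theorem at all: it is stated as a direct quotation of Corollary~3.4 and Theorem~3.5 of \cite{BriandConfortola}, with the reference already in the theorem header, and no proof environment follows. So there is nothing in the paper to compare your sketch against.

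That said, your outline is a faithful summary of the Briand--Confortola argument: the It\^o computation on $|Y|^p$, the linearisation $f(s,Y_s,Z_s)=f(s,Y_s,0)+\lambda_s Z_s$ with $|\lambda|\le K$, the Girsanov change of measure whose density is controlled via the reverse H\"older inequality for BMO martingales (Kazamaki), and the H\"older bookkeeping that forces $p\in(p_\star,p^\star)$ and produces the exponent $P^\star$. Since the paper merely imports the result, your sketch is an appropriate stand-in; if anything, it goes further than the paper does.
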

 \noindent We now set the following assumptions

\vspace{0.5em}
\noindent $\mathbf{(sD^\infty)}$ For any $p>1$, $\xi$ belongs to $\mathbb D^{1,p}$ and $\omega \longmapsto f(t,\omega,y,z)$ belongs to $L^2([0,T]; \mathbb D^{1,p})$.

\vspace{0.5em}
\noindent ($\mathbf{sH_{1,\infty}}$) For any $p>1$ and for any $h\in H$
$$\hspace{-1em}\lim\limits_{\varepsilon \to 0}\ \E\left[ \left(\int_0^T\left|\frac{f(s,\cdot+\varepsilon h, Y_s, Z_s)-f(s,\cdot,Y_s,Z_s)}{\varepsilon}-\langle Df(s,\cdot,Y_s,Z_s),\dot h\rangle_{\mathfrak h}\right| ds\right)^p\right]=0.$$

\vspace{0.5em}
\noindent ($\mathbf{sH_{2,\infty}}$) Let $(\varepsilon_k)_{k\in \mathbb N}$ be a sequence in $(0,1]$ such that $\lim\limits_{k\to +\infty} \varepsilon_k=0$, and let  $(Y^k,Z^k)_k$ be a sequence of random variables which converges in $\S_p\times \H_p$ for any $p<p^*$ to some $(Y,Z)$. Then for all $h\in H$, the following convergences hold in probability
\begin{align}
\label{chap2assumpcts1SBC}
&\|f_y(\cdot,\omega+\varepsilon_k h,Y^k_\cdot,Z_\cdot) -f_y(\cdot,\omega,Y_\cdot,Z_\cdot)\|_{L^2([0,T])}\underset{k\to+\infty}{\longrightarrow}0 \nonumber\\
& \|f_z(\cdot,\omega+\varepsilon_k h,Y^k_\cdot,Z^k_\cdot)-  f_z(\cdot,\omega,Y_\cdot,Z_\cdot)\|_{L^2([0,T])}\underset{k\to+\infty}{\longrightarrow}0,\end{align}
or
\begin{align}
\label{chap2assumpcts2SBC}
\|f_y(\cdot,\omega+\varepsilon_k h,Y^k_\cdot,Z^k_\cdot) -f_y(\cdot,\omega,Y_\cdot,Z_\cdot)\|_{L^2([0,T])}\underset{k\to +\infty}{\longrightarrow}0\nonumber\\
 \|f_z(\cdot,\omega+\varepsilon_k h,Y_\cdot,Z^k_\cdot)   -f_z(\cdot,\omega,Y_\cdot,Z_\cdot)\|_{L^2([0,T])}\underset{k\to +\infty}{\longrightarrow}0.
\end{align}

\begin{Remark}\label{remark:pourtoutp}
Notice that Assumption $\mathbf{(sH_{1,\infty})}$ implies that both $\mathbf{(BC2)}$ and $\mathbf{(BC3)}$ are true for any $p^*>1$. Thus, Theorem \ref{Mbio:thm:apriori:BC} holds under $\mathbf{(BC1)}$ and $(\mathbf{sH_{1,\infty}})$ and Inequality \eqref{apriori:briandconfortola} is satisfied for any $p>1$ with a corresponding $p^*$ which can be chosen greater than $p_*$ defined by $\mathbf{(BC1)}$.
\end{Remark} 
\noindent We can now state the main result of this section.
\begin{Theorem}\label{chapBiothm:diff:sl:BC} Assume that $(\mathbf{BC_1})$-$(\mathbf{BC_3})$, $(\mathbf{ D^{1,\infty}})$, $(\mathbf{sH_{1,\infty}})$ and $(\mathbf{sH_{2,\infty}})$ hold. Then, for any $p>1$ and $t\in [0,T]$, $Y_t\in \mathbb D^{1,p}$ and $Z\in L^2([t,T]; \mathbb D^{1,p}) $. Besides, a version of 
$$(D_u Y_t, D_u Z_t)_{0\leq u\leq t, 0\leq t\leq T},$$ is given as the solution to the affine BSDE:
\begin{align}
\nonumber
D_u Y_t& = D_u\xi +\int_t^T \left(D_u f(s,Y_s,Z_s) +f_y(s,Y_s,Z_s)D_u Y_s+ f_z(s,Y_s,Z_s) D_u Z_s\right)ds\\
\label{chapBioeq:DY:thm:BC}& -\int_t^T D_u Z_s dW_s.
\end{align}
\end{Theorem}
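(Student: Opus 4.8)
\noindent The strategy is to follow the proof of Theorem \ref{chapBiothm:diff:sl} (itself modelled on the proof of Theorem 5.1 in \cite{MastroliaPossamaiReveillac_MDBSDE}), replacing the \textit{a priori} estimates of Theorem \ref{chapBioexistence:slipsch} by those of Theorem \ref{Mbio:thm:apriori:BC} and working in the spaces $\mathcal S_p\times\mathcal H_p$ rather than in the $\beta$-spaces. I would apply the difference-quotient characterisation of Malliavin differentiability, Theorem \ref{thm:newcarD12}, to $F=Y_t$ and to $F=\int_t^T Z_s\,dW_s$. Fix $h\in H$ and $\varepsilon>0$; since $h\in H$, $\tau_{\varepsilon h}$ is a measurable shift on $\Omega$, and setting $Y^\varepsilon_s:=\varepsilon^{-1}(Y_s\circ\tau_{\varepsilon h}-Y_s)$ and likewise $Z^\varepsilon$, $\xi^\varepsilon$, one checks exactly as in the proof of Theorem \ref{chapBiothm:diff:sl} that $(Y^\varepsilon,Z^\varepsilon)$ solves on $[t,T]$ a linear BSDE with terminal value $\xi^\varepsilon$, source $\tilde A^\varepsilon_r:=\varepsilon^{-1}(f(r,\cdot+\varepsilon h,Y_r,Z_r)-f(r,\cdot,Y_r,Z_r))$ and linear coefficients $\tilde A^{y,\varepsilon}_r$, $\tilde A^{z,\varepsilon}_r$ obtained, via the fundamental theorem of calculus, as averages of $f_y$, resp. $f_z$, over an intermediate point parametrised by $\theta\in[0,1]$; in particular $|\tilde A^{y,\varepsilon}_r|\le K_r^{2\alpha}$ and $|\tilde A^{z,\varepsilon}_r|\le K_r$.

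\noindent The candidate for the derivative is the affine BSDE \eqref{chapBioeq:DY:thm:BC}, read in its integrated-against-$\dot h$ form, i.e. with terminal condition $\langle D\xi,\dot h\rangle_{\mathfrak h}$, source $\langle Df(\cdot,Y_\cdot,Z_\cdot),\dot h\rangle_{\mathfrak h}$ and linear coefficients $f_y(\cdot,Y_\cdot,Z_\cdot)$, $f_z(\cdot,Y_\cdot,Z_\cdot)$. I would first verify that this equation falls within the scope of Theorem \ref{Mbio:thm:apriori:BC}: the monotonicity and $z$-Lipschitz structure required by $(\mathbf{BC_1})$ are inherited from the bounds $|f_y|\le K^{2\alpha}$ and $|f_z|\le K$ (the sign condition being automatic for a linear driver), while the integrability in every $L^p$ of $\langle D\xi,\dot h\rangle_{\mathfrak h}$ and of $\langle Df(\cdot,Y_\cdot,Z_\cdot),\dot h\rangle_{\mathfrak h}$ follows from $(\mathbf{sD^\infty})$ and $(\mathbf{sH_{1,\infty}})$; see Remark \ref{remark:pourtoutp}. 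Hence \eqref{chapBioeq:DY:thm:BC} admits a unique solution $(\tilde Y^h,\tilde Z^h)\in\mathcal S_p\times\mathcal H_p$ for every $p>1$.

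\noindent Subtracting, the pair $(Y^\varepsilon-\tilde Y^h,Z^\varepsilon-\tilde Z^h)$ solves an affine BSDE with linear coefficients $\tilde A^{y,\varepsilon}$, $\tilde A^{z,\varepsilon}$ (still dominated by $K^{2\alpha}$, $K$, so that the constant and exponent in \eqref{apriori:briandconfortola} can be taken uniform in $\varepsilon$), terminal value $\xi^\varepsilon-\langle D\xi,\dot h\rangle_{\mathfrak h}$ and source
$$S_r:=\big(\tilde A^\varepsilon_r-\langle Df(r,Y_r,Z_r),\dot h\rangle_{\mathfrak h}\big)+\tilde Y^h_r\big(\tilde A^{y,\varepsilon}_r-f_y(r,Y_r,Z_r)\big)+\tilde Z^h_r\big(\tilde A^{z,\varepsilon}_r-f_z(r,Y_r,Z_r)\big).$$
By \eqref{apriori:briandconfortola} (available for all $p$ by Remark \ref{remark:pourtoutp}) it then suffices to prove that the $L^p$-norm of $\xi^\varepsilon-\langle D\xi,\dot h\rangle_{\mathfrak h}$ and the $L^p(\Omega;L^1([t,T]))$-norm of $S$ tend to $0$ as $\varepsilon\to0$. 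The former does, by Theorem \ref{thm:newcarD12} applied to $\xi$ (via $(\mathbf{sD^\infty})$); the term $\tilde A^\varepsilon-\langle Df,\dot h\rangle_{\mathfrak h}$ vanishes by $(\mathbf{sH_{1,\infty}})$. For the remaining two terms I would use $(\mathbf{sH_{2,\infty}})$: \eqref{chap2assumpcts1SBC} (or \eqref{chap2assumpcts2SBC}) gives convergence to $0$ in probability of $\tilde A^{y,\varepsilon}-f_y(\cdot,Y,Z)$ and $\tilde A^{z,\varepsilon}-f_z(\cdot,Y,Z)$ in $L^2([0,T])$; combining a Cauchy--Schwarz inequality in time with $\tilde Y^h\in\mathcal S_p$, $\tilde Z^h\in\mathcal H_p$, the bounds $|\tilde A^{y,\varepsilon}-f_y|\le 2K^{2\alpha}$, $|\tilde A^{z,\varepsilon}-f_z|\le 2K$, and the fact that $\int_0^T K_s^2\,ds$ has finite moments of every order (a consequence of $(\mathbf{BC_1})(iii)$, which makes the stochastic integrals of $K$ BMO-martingales, together with standard BMO estimates, cf. Theorem \ref{kazamakithm}), yields an $\varepsilon$-uniform bound in $L^{p+\eta}$ for a small $\eta>0$, hence uniform integrability; de la Vall\'ee-Poussin's criterion and dominated convergence then give the convergence of these terms.

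\noindent The step I expect to be the main obstacle is the control of $\tilde Z^h_r(\tilde A^{z,\varepsilon}_r-f_z(r,Y_r,Z_r))$: since $\tilde Z^h$ lies only in $\mathcal H_p$ and in no sup-normed space, one cannot factor it out, and the naive H\"older split produces a $|\tilde Z^h|$-power that cannot be absorbed. The remedy is to exploit that this error is pointwise dominated by $2K$, so that, after Cauchy--Schwarz in time, the whole term is controlled by $\big(\int_0^T|\tilde Z^h_r|^2\,dr\big)^{1/2}\big(\int_0^T K_r^2\,dr\big)^{1/2}$, whose powers are integrable thanks to the BMO property of $K$; this upgrades the in-probability convergence to convergence in $L^p$. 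Once all error terms are shown to vanish one obtains $\|Y^\varepsilon-\tilde Y^h\|_{\mathcal S_p}+\|Z^\varepsilon-\tilde Z^h\|_{\mathcal H_p}\to0$, so in particular the difference quotients of $Y_t$ and of $\int_t^T Z_s\,dW_s$ converge in $L^p$ to $\tilde Y^h_t$ and $\int_t^T\tilde Z^h_s\,dW_s$; Theorem \ref{thm:newcarD12} then gives $Y_t\in\mathbb D^{1,p}$ with $\langle\nabla Y_t,h\rangle_H=\tilde Y^h_t$, while, arguing as in \cite[Lemma 2.3]{PardouxPeng_BSDEQuasilinearParabolicPDE} together with the It\^o isometry, one deduces $Z\in L^2([t,T];\mathbb D^{1,p})$ and recovers the kernel representation \eqref{chapBioeq:DY:thm:BC} for $(D_uY_t,D_uZ_t)$.
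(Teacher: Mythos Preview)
Your plan is essentially the paper's own proof: both shift by $\tau_{\varepsilon h}$, form the difference quotient BSDE with coefficients $\tilde A^{y,\varepsilon},\tilde A^{z,\varepsilon}$, compare it with the candidate linear BSDE \eqref{chapBioeq:DY:thm:BC}, and feed the difference into the Briand--Confortola estimate \eqref{apriori:briandconfortola} (using Remark \ref{remark:pourtoutp} to get it for every $p$), then finish via Theorem \ref{thm:newcarD12} and \cite[Lemma 2.3]{PardouxPeng_BSDEQuasilinearParabolicPDE}. The paper is in fact terser than you are --- it simply writes down the estimate, notes that $\mathbb E\big[(\int_0^T (K_s^{2\alpha}+K_s^2)\,ds)^{P^\star/2}\big]<\infty$ thanks to $(\mathbf{BC_1})(iii)$, and then refers back to the computations in Theorem \ref{chapBiothm:diff:sl} and \cite{MastroliaPossamaiReveillac_MDBSDE} for the uniform-integrability step that you spell out.

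One small imprecision worth flagging: you write $|f_y|\le K^{2\alpha}$ and $|\tilde A^{y,\varepsilon}-f_y|\le 2K^{2\alpha}$, but $(\mathbf{BC_1})(ii)$ is a one-sided (monotonicity) condition and only yields $f_y\le K^{2\alpha}$. This does not affect the verification that the linearised drivers satisfy $(\mathbf{BC_1})$ (the sign condition needs only the upper bound), but your de~la~Vall\'ee-Poussin argument for the $\tilde Y^h(\tilde A^{y,\varepsilon}-f_y)$ term, as written, uses a two-sided pointwise domination that is not granted by the hypotheses; the paper sidesteps this by not giving details at all.
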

\begin{proof}  The proof is similar to that of Theorem \ref{chapBiothm:diff:sl}. We only consider the case where \eqref{chap2assumpcts1SBC} holds in Assumption $(\mathbf{sH_{1,\infty}})$ since the other one can be treated similarly. First notice that under Assumption $\mathbf{(sD^\infty)}$, $(\mathbf{BC_1})$-$(\mathbf{BC_3})$ and according to Theorem \ref{Mbio:thm:apriori:BC} together with Remark \ref{remark:pourtoutp}, for any $p>1$,  $\| Y\|_{\mathcal S_p}+\| Z\|_{\mathcal H_p}<+\infty$   . We aim at applying Theorem \ref{thm:newcarD12} (see \cite{MastroliaPossamaiReveillac_MDBSDE}) with $F=Y_t$ and $F=\int_t^T Z_s dW_s$. The proof is very close to the proof of Theorem 5.1 in \cite{MastroliaPossamaiReveillac_MDBSDE} and we recall here the main ideas. Let $\varepsilon >0$ and $h\in H$. We have 
\begin{equation}
\label{Mbio:edsr:affine:epsilon}Y_s\circ \tau_{\varepsilon h} =\xi\circ\tau_{\varepsilon h} +\int_s^T f(r,Y_r,Z_r) \circ\tau_{\varepsilon h} dr -\int_s^T Z_r\circ\tau_{\varepsilon h} dW_r, \quad \forall s \in [t,T], \; \P_0-a.s. 
\end{equation}
As a consequence, setting for simplicity
$$Y_s^\varepsilon:=\frac1\varepsilon (Y_s\circ \tau_{\varepsilon h}-Y_s),\quad Z_s^\varepsilon:=\frac1\varepsilon (Z_s\circ \tau_{\varepsilon h}-Z_s), \quad \xi^\varepsilon:=\frac1\varepsilon (\xi\circ \tau_{\varepsilon h}-\xi), \; s \in [t,T],$$ 
we have that $(Y^\varepsilon, Z^\varepsilon)$ solves the BSDE:
\begin{equation}\label{chapBioedsr_accroissement:BC}
Y_s^\varepsilon=\xi^\varepsilon+ \int_s^T (\tilde{A}_r^\varepsilon+\tilde{A}_r^{y,\varepsilon} Y_r^\varepsilon+\tilde{A}_r^{z,\varepsilon} Z_r^\varepsilon )dr-\int_s^T Z_r^\varepsilon dW_r,
\end{equation}
with
\begin{align*}
    &\tilde{A}_r^{y,\varepsilon}:=\int_0^1 f_y(r,\cdot+\varepsilon h, Y_r+\theta (Y_r\circ \tau_{\varepsilon h}-Y_r), Z_r)d\theta,\\
  &\tilde{A}_r^{z,\varepsilon}:= \int_0^1f_z(r,\cdot+\varepsilon h, Y_r\circ \tau_{\varepsilon h}, Z_r+\theta (Z_r\circ \tau_{\varepsilon h}-Z_r)) d\theta,\\
  & \tilde{A}_r^\varepsilon:=\frac1\varepsilon (f(r,\cdot+\varepsilon h, Y_r, Z_r)-f(r,\cdot,Y_r,Z_r)).
 \end{align*}
Hence, under Assumptions  $(\mathbf{BC_1})$-$(\mathbf{BC_3})$, $(\mathbf{sD^{1,\infty}})$, according to Theorem \ref{Mbio:thm:apriori:BC}, $(Y^\varepsilon, Z^\varepsilon)$ is the unique solution of BSDE \eqref{Mbio:edsr:affine:epsilon} in $\mathcal S_p\times \mathcal H_p$ for any $p>1$.\vspace{0.5em}

\noindent Consider now the following stochastic affine BSDE on $[t,T]$, which admits a unique solution $(\tilde{Y}^h, \tilde{Z}^h)\in (\mathcal S_{p} \times\mathcal H_{p})$ for any $p>1$ according to Theorem \ref{Mbio:thm:apriori:BC} under Assumption  $(\mathbf{BC_1})$-$(\mathbf{BC_3})$, $(\mathbf{sD^{1,\infty}})$,
\begin{align}
\label{chapBioeq:DY,h:BC}
\nonumber \tilde{Y}^h_s&=\langle D\xi,\dot h\rangle_{L^2([0,T])}-\int_s^T \tilde{Z}_r^h dW_r\\
&+\int_s^T \left(\langle Df(r,Y_r,Z_r),\dot h\rangle_{L^2([0,T])}+\tilde{Y}_r^hf_y(r,Y_r,Z_r)+\tilde{Z}_r^hf_z(r,Y_r,Z_r) \right)dr.
\end{align}
Hence, using Theorem \ref{Mbio:thm:apriori:BC} we obtain for any $p^*>p>1$
\begin{align*}
&\| Y^\varepsilon- \tilde{Y}^h\|_{\mathcal S^{p}}+\|Z^\varepsilon-\tilde{Z}^h \|_{\mathcal H^{p} }\\
&\leq C\mathbb E\left[|\xi^\varepsilon-\langle D\xi,\dot h\rangle_{L^2([0,T])}|^{p^\star}+\left(\int_0^T |X_s^{\varepsilon}+ X_s^{y,\varepsilon}+X_s^{z,\varepsilon}| ds\right)^{p^\star} \right]^\frac1{p^\star}\\
&\hspace{1em}\times \left( 1+\mathbb E\left[\left( \int_0^T (K_s^{2\alpha}+K_s^2 )ds\right)^{P^\star/2} \right]^{1/{P^\star}}\right),
\end{align*}
where
\begin{align*}
X_s^\varepsilon&:=\tilde{A}_s^\varepsilon-  \langle Df(s,Y_s,Z_s),\dot h\rangle_{L^2([0,T])}\\
X^{y,\varepsilon}_s&:= \tilde{Y}_s^h(\tilde{A}_s^{y,\varepsilon}-f_y(s,Y_s,Z_s))\\
X^{z,\varepsilon}_s&:=\tilde{Z}_s^h(\tilde{A}_s^{z,\varepsilon}-f_z(s,Y_r,Z_r) ).
\end{align*}
Notice that under $\mathbf{(BC1)}$ $(iii)$ we have 
$$ \mathbb E\left[\left( \int_0^T (K_s^{2\alpha}+K_s^2 )ds\right)^{P^\star/2} \right]<+\infty.$$
Hence, after the same kind of calculations than those made in the proofs of Theorem 5.1 in \cite{MastroliaPossamaiReveillac_MDBSDE} or Theorem \ref{chapBiothm:diff:sl} above, we deduce that for any $t\in [0,T]$ and any $p>1$, $Y_t\in \mathbb D^{1,p}$ and $Z\in L^2([t,T]; \mathbb D^{1,p}) $ and that their Malliavin derivatives are solutions to \eqref{chapBioeq:DY:thm:BC}.
 \end{proof}
 
\subsection{Discussion and comparison of results}\label{Mbio:comparaison}
We begin with Assumption $\mathbf{(DsL^{p,\beta})}$ and the first approach inspired by \cite{wang_ran_chen}. Even if Assumption $(i)$ is not too restrictive in view of the theory developed in \cite{wang_ran_chen}, in practice we could have some difficulties to verify $(ii)$ and $(iii)$. Indeed, in $(ii)$ we have to control the norm in $\mathbb H_{2p,\beta,a}$ of $1/a_t$, and $(iii)$ requires a control of the ess sup of the derivative of $f$ with respect to $z$. As soon as $r$ and $\theta$ are random, these assumptions restrict significantly the range of possible applications. As explained above, these assumptions are strongly linked to \textit{{\it a priori}} estimates obtained in \cite{wang_ran_chen}, which suggests to modify the proofs in \cite{wang_ran_chen} to try to obtain weaker conditions, if possible.
\vspace{0.5em}

\noindent Concerning the second approach, \textit{{\it a priori}} estimates \eqref{apriori:briandconfortola} seem to be better, since they are similar to those obtained in the Lipschitz or quadratic case (see \cite{BriandDelyonHuPardouxStoica}). Notice however that the order of these {\it a priori} estimates depends closely on the BMO-norm of the stochastic integral of the Lipschitz constant $K$, which in practice, could be quite difficult to control. We provide conditions in $\mathbb D^{1,p}$ for any $p>1$ due to the control of the norm of $Y$ and $Z$ at an order depending on this BMO-norm. Assumptions $\mathbf{(sD^\infty)}$  and $\mathbf{(sH_{1,\infty})}$ are not so surprising, since they are similar to conditions obtained in Section 7 in \cite{MastroliaPossamaiReveillac_MDBSDE} when dealing with quadratic growth BSDEs.\vspace{0.5em}

\noindent From now, we set the following two assumptions.

\vspace{0.5em}
\noindent $\mathbf{(EKH^{p,\beta})}$ Let Assumptions $\mathbf{(sL^{1,\beta})}$ and $\mathbf{(DsL^{p,\beta})}$ hold.\vspace{0.5em}

\noindent $\mathbf{(BC)}$ Let Assumptions $(\mathbf{BC_1})$-$(\mathbf{BC_3})$, $(\mathbf{s D^{\infty}})$, $(\mathbf{sH_{1,\infty}})$ and $(\mathbf{sH_{2,\infty}})$ hold.

\subsection{Example: affine BSDE with unbounded coefficients}\label{sectionaffinestoBSDE}
We now study a particular stochastic Lipschitz BSDE in the non-Markovian case:
\begin{equation}\label{chapBioedsraffine}
Y_t=\xi+\int_t^Tf(s,Y_s,Z_s)ds -\int_t^T Z_s dW_s, \; \forall t\in [0,T], \; \mathbb P-a.s.
\end{equation}
where 
\begin{align*}
f: [0,T]\times\Omega\times \mathbb R\times\mathbb R &\longrightarrow \mathbb R\\
(t,\omega,y,z)&\longmapsto \lambda_s(\omega)+\mu_s(\omega) y+\nu_s(\omega) z,
\end{align*}
and where $\xi$ is an $\mathcal{F}_T$-measurable random variable and $\lambda,\mu,\nu:[0,T]\times\Omega \longrightarrow \real$ are $\F$-progressively measurable processes. 
\begin{Remark}The BSDE \eqref{chapBioedsraffine} studied in this section generalizes \cite[BSDE (5)]{AIR} for affine BSDEs, since the generator of \eqref{chapBioedsraffine} is affine in both $Y$ and $Z$. By adding a Lipschitz coefficient with respect to $Y$ and $Z$ in the generator satisfying Assumption $(A3)$ in \cite{AIR}, one could show that we strictly extend \cite[Section 3]{AIR}. Besides, we insist on the fact that $\lambda$, $\mu$ and $\nu$ are not bounded, which also extend the results in \cite[Section 6.2]{MastroliaPossamaiReveillac_MDBSDE}.
\end{Remark}

\noindent $\mathbf{(A_1)}$ There exists a constant $C>0$ such that for any stopping time $\tau\leq T$:
 $$\mathbb E\left[ \int_\tau^T |\nu_s|^2 ds \Big| \mathcal F_\tau\right]\leq C^2. $$

 \noindent $\mathbf{(A_2)}$ For any $p>1$,
 \begin{itemize}
 \item[$(i)$]$\exp\left( \int_0^\cdot |\mu_s| ds\right)\in \mathcal S_p, $
 and 
 \item[$(ii)$]$\mathbb E\left[ |\xi|^p +\left( \int_0^T |\lambda_t| dt\right)^p\right]<+\infty. $
 \end{itemize}

\noindent Before going further, notice that $\mathbf{(A_1)}$ is equivalent to saying that $\int_0^\cdot \nu_s dW_s$ is a BMO-martingale, which corresponds to Assumption $($A2$)$ in \cite{AIR} or Assumption A1. in \cite{BriandConfortola}. However, we do not assume that the same statement holds for $\int_0^\cdot \mu_s dW_s$. Indeed, in $\mathbf{(A_2)}$ we just assume that the process $\left( \int_0^\cdot \mu_s ds\right)_{t\in [0,T]}$ has exponential moments of all orders. 
 \begin{Theorem}\label{thm:affine:existence}
 Assume that Assumptions $\mathbf{(A_1)}$ and $\mathbf{(A_2)}$ hold. Then, BSDE \eqref{chapBioedsraffine} admits a unique solution $(Y,Z)\in \mathcal S_p\times \mathcal H_p$ for any $p>1$. Besides, Estimate \eqref{apriori:briandconfortola} holds for any $1<p<p^*$. 
 \end{Theorem}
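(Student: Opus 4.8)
The plan is to eliminate the affine-in-$Y$ term by a pathwise multiplicative change of unknown, thereby reducing \eqref{chapBioedsraffine} to a BSDE whose generator is affine in $Z$ only and whose coefficients fit exactly the Briand--Confortola framework of Theorem \ref{Mbio:thm:apriori:BC}. Set $\Lambda_t:=\exp\!\big(\int_0^t\mu_s\,ds\big)$ and define $\bar Y_t:=\Lambda_tY_t$, $\bar Z_t:=\Lambda_tZ_t$, $\bar\xi:=\Lambda_T\xi$, $\bar\lambda_t:=\Lambda_t\lambda_t$. An application of It\^o's formula shows that $(Y,Z)$ solves \eqref{chapBioedsraffine} if and only if $(\bar Y,\bar Z)$ solves
\[
\bar Y_t=\bar\xi+\int_t^T\big(\bar\lambda_s+\nu_s\bar Z_s\big)\,ds-\int_t^T\bar Z_s\,dW_s,\qquad t\in[0,T],\ \mathbb P\text{-a.s.},
\]
whose generator $\bar f(t,\omega,y,z)=\bar\lambda_t(\omega)+\nu_t(\omega)z$ does not depend on $y$.

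Next I would verify that $\bar f$ satisfies $(\mathbf{BC_1})$--$(\mathbf{BC_3})$ for \emph{every} exponent $p^\star>1$, with $K_t:=1\vee|\nu_t|$ and an arbitrary $\alpha\in(0,1)$. Indeed $(\mathbf{BC_1})(i)$ is immediate; the monotonicity inequality in $(\mathbf{BC_1})(ii)$ holds with left-hand side $0\le K_t^{2\alpha}|y-y'|^2$ since $\bar f$ is $y$-independent, and the $z$-Lipschitz bound holds because $|\nu_t|\le K_t$; for $(\mathbf{BC_1})(iii)$, Assumption $\mathbf{(A_1)}$ yields, for any stopping time $\tau\le T$,
\[
\mathbb E\!\left[\int_\tau^T K_s^2\,ds\,\Big|\,\mathcal F_\tau\right]\le (T-\tau)+\mathbb E\!\left[\int_\tau^T\nu_s^2\,ds\,\Big|\,\mathcal F_\tau\right]\le T+C^2<+\infty,
\]
so the constant $N$ and hence $p_\star$ of \cite{BriandConfortola} are finite. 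For $(\mathbf{BC_2})$ and $(\mathbf{BC_3})$ (with $F_t:=|\bar\lambda_t|$) it suffices to note that $\Lambda_t\le\Lambda^\star:=\exp\!\big(\int_0^T|\mu_s|\,ds\big)$, that $\Lambda^\star$ has moments of all orders by $\mathbf{(A_2)}(i)$, and that $\xi$ and $\int_0^T|\lambda_s|\,ds$ have moments of all orders by $\mathbf{(A_2)}(ii)$; a H\"older split then gives $\mathbb E\big[|\bar\xi|^{p^\star}+(\int_0^T|\bar\lambda_s|\,ds)^{p^\star}\big]<+\infty$ for all $p^\star>1$. Fixing, for a given $p>1$, an exponent $p^\star>\max(p,p_\star)$ and applying Theorem \ref{Mbio:thm:apriori:BC} then yields a unique $(\bar Y,\bar Z)\in\mathcal S_p\times\mathcal H_p$ together with Estimate \eqref{apriori:briandconfortola} written for the data $\bar\xi$, $\bar\lambda_s=\bar f(s,0,0)$ and $K$; since $p^\star$ is arbitrary, $(\bar Y,\bar Z)$ lies in $\mathcal S_p\times\mathcal H_p$ for every $p>1$ and the estimate holds for all $p\in(p_\star,+\infty)$.

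It then remains to transfer back to $(Y,Z)=(\Lambda_\cdot^{-1}\bar Y,\Lambda_\cdot^{-1}\bar Z)$: from $\Lambda_t^{-1}\le\Lambda^\star$ one gets $\sup_t|Y_t|\le\Lambda^\star\sup_t|\bar Y_t|$ and $\int_0^TZ_s^2\,ds\le(\Lambda^\star)^2\int_0^T\bar Z_s^2\,ds$, so a H\"older split (using that $\Lambda^\star$ has all moments and $(\bar Y,\bar Z)\in\mathcal S_{2p}\times\mathcal H_{2p}$) gives $(Y,Z)\in\mathcal S_p\times\mathcal H_p$ for every $p>1$, while the It\^o computation above shows $(Y,Z)$ solves \eqref{chapBioedsraffine}; uniqueness follows by the same transformation and the uniqueness part of Theorem \ref{Mbio:thm:apriori:BC}. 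An alternative route for the existence/representation statements is a Girsanov change of measure removing the $\nu_sZ_s$ term (legitimate since $\int_0^\cdot\nu_s\,dW_s$ is a BMO-martingale by $\mathbf{(A_1)}$, hence $\mathcal E\big(\int_0^\cdot\nu_s\,dW_s\big)$ is uniformly integrable by Theorem \ref{kazamakithm}), which turns \eqref{chapBioedsraffine} into a BSDE linear in $Y$ only with the explicit solution $Y_t=\mathbb E^{\mathbb Q}_t\big[\Lambda_{t,T}\xi+\int_t^T\Lambda_{t,s}\lambda_s\,ds\big]$, $\Lambda_{t,s}:=\exp(\int_t^s\mu_u\,du)$. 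Either way, the step requiring the most care is the integrability transfer between $\mathbb P$ and the transformed (or equivalent) problem: this is where $\mathbf{(A_1)}$ and $\mathbf{(A_2)}$ are genuinely used, and one must keep track that the BMO-norm of $\int_0^\cdot\nu_s\,dW_s$ only controls a finite range of exponents, a limitation that is exactly absorbed by the fact that $\xi$, $\int_0^T|\lambda_s|\,ds$ and $\Lambda^\star$ belong to $L^q$ for every $q\ge1$.
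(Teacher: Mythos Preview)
Your argument is correct and takes a genuinely different route from the paper. The paper does not perform any change of unknown: it argues that $\mathbf{(A_1)}$ and $\mathbf{(A_2)}(i)$ are weaker than $\mathbf{(BC_1)}$, and then asserts that if one re-reads the proofs of Lemma~3.2, Lemma~3.3, Corollary~3.4 and Theorem~3.5 in \cite{BriandConfortola} line by line, the BMO hypothesis is only ever used for the $z$-Lipschitz constant, while for the $y$-monotonicity constant only the exponential-moment bound (their Relation~(2), i.e.\ $\mathbf{(A_2)}(i)$) is invoked; hence the conclusions of Theorem~\ref{Mbio:thm:apriori:BC} persist under $\mathbf{(A_1)}$--$\mathbf{(A_2)}$. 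In other words, the paper treats Briand--Confortola as a white box and weakens its hypotheses from the inside.

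Your approach instead treats Theorem~\ref{Mbio:thm:apriori:BC} as a black box: the multiplicative change $\bar Y=\Lambda Y$ kills the $\mu_sY_s$ term so that the transformed generator is $y$-independent, whence $\mathbf{(BC_1)}$ holds trivially with $K_t=1\vee|\nu_t|$ and only $\mathbf{(A_1)}$ is needed for the BMO condition; $\mathbf{(A_2)}$ is then used solely to control the moments of $\Lambda^\star$ when transferring integrability back and forth. This is cleaner and more self-contained, since it does not ask the reader to revisit the proofs in \cite{BriandConfortola}. The price you pay is that the a~priori estimate you obtain is \eqref{apriori:briandconfortola} for the \emph{transformed} data $(\bar\xi,\bar\lambda,K)$ rather than directly for $(\xi,\lambda)$; converting it back introduces extra factors of $\|\Lambda^\star\|_{L^q}$, so the resulting bound for $(Y,Z)$ is not literally \eqref{apriori:briandconfortola} but an equivalent estimate. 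The paper's route, by contrast, yields \eqref{apriori:briandconfortola} in its original form (with the understanding that the $K$ appearing there involves only $\nu$), at the cost of a less explicit proof.
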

 
 \begin{proof} $\mathbf{(A_1)}$ and $\mathbf{(A_2)} (i)$ are weaker assumptions than $\mathbf{(BC_1)}$, so we cannot apply directly Corollary 3.4 and Theorem 3.5 in \cite{BriandConfortola}. However, by reproducing the proof of Lemma 3.2, Lemma 3.3, Corollary 3.4 and Theorem 3.5 in \cite{BriandConfortola}, one notices that we only need to have a BMO-property for $\int_0^\cdot \nu_s dW_s$, since only Relation $(2)$ in \cite{BriandConfortola}, corresponding to $\mathbf{(A_2)} (i)$, is used to deal with terms depending on $\mu$. Hence, for affine BSDE \eqref{chapBioedsraffine} we can make replace Assumption $\mathbf{(BC_1)}$ with Assumptions $\mathbf{(A_1)}$ and $\mathbf{(A_2)}$. We then deduce that BSDE \eqref{chapBioedsraffine} admits a unique solution $(Y,Z)\in \mathcal S_p\times \mathcal H_p$ for any $p>1$ and \eqref{apriori:briandconfortola} holds for any $1<p<p^*$. 
 \end{proof}

\noindent In this particular case, Assumptions $\mathbf{(sD^\infty)}$, $\mathbf{(sH_{1,\infty})}$ and $\mathbf{(sH_{2,\infty})}$ become

\vspace{0.5em}
\noindent $\mathbf{(DA_1)}$ For any $p>1$, $\xi$ belongs to $\mathbb D^{1,p}$ and the stochastic processes $$(t,\omega) \longmapsto \lambda_t(\omega), \mu_t(\omega), \nu_t(\omega)$$ belong to $L^2([0,T]; \mathbb D^{1,p})$.

\vspace{0.5em}
\noindent ($\mathbf{DA_{2}}$) Let $(\varepsilon_k)_{k\in \mathbb N}$ be a sequence in $(0,1]$ such that $\lim\limits_{k\to +\infty} \varepsilon_k=0$, and let  $(Y^k,Z^k)_k$ be a sequence of random variables which converges in $\S_p\times \H_p$ for any $p>1$ to some $(Y,Z)$. Then for all $h\in H$, the following convergences hold in probability
\begin{align*}
&\|\mu_\cdot(\omega+\varepsilon_k h) -\mu_\cdot(\omega)\|_{L^2([0,T])}\underset{k\to+\infty}{\longrightarrow}0, \nonumber\\
&\|\nu_\cdot(\omega+\varepsilon_k h) -\nu_\cdot(\omega)\|_{L^2([0,T])}\underset{k\to+\infty}{\longrightarrow}0.\end{align*}

\begin{Theorem}\label{chapBiothm:diff:affine:BC} Assume that $\mathbf{(A_1)}$, $\mathbf{(A_2)}$, $\mathbf{(DA_1)}$ and $\mathbf{(DA_2)}$ hold. Then, by denoting $(Y,Z)$ the unique solution of \eqref{chapBioedsraffine}, for any $p>1$ and $t\in [0,T]$, $Y_t\in \mathbb D^{1,p}$ and $Z\in L^2([t,T]; \mathbb D^{1,p}) $. Besides, a version of 
$$(D_u Y_t, D_u Z_t)_{0\leq u\leq t, 0\leq t\leq T},$$ is given as the solution to the affine BSDE:
\begin{equation}
\label{chapBioeq:DY:thm:BC:affine}
D_u Y_t = D_u \xi +\int_t^T \left(D_u \lambda_s +D_u\mu_s Y_s+ D_u\nu_s Z_s+ \mu_s D_u Y_s+ \nu_s D_u Z_s\right)ds -\int_t^T D_u Z_s dW_s,
\end{equation}
\end{Theorem}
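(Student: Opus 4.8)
The plan is to specialise the general machinery of Theorem~\ref{chapBiothm:diff:sl:BC} to the affine generator $f(t,\omega,y,z)=\lambda_t(\omega)+\mu_t(\omega)y+\nu_t(\omega)z$, so that the bulk of the work consists of checking that Assumptions $(\mathbf{A_1})$, $(\mathbf{A_2})$, $(\mathbf{DA_1})$ and $(\mathbf{DA_2})$ imply the hypotheses of that theorem. First I would invoke Theorem~\ref{thm:affine:existence}: under $(\mathbf{A_1})$ and $(\mathbf{A_2})$ the BSDE~\eqref{chapBioedsraffine} has a unique solution $(Y,Z)\in\mathcal S_p\times\mathcal H_p$ for every $p>1$, with the \emph{a priori} estimate~\eqref{apriori:briandconfortola} available. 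As noted in the paragraph preceding the statement, $(\mathbf{A_1})$ plays exactly the role of $(\mathbf{BC_1})(iii)$ for the process $\nu$, while $(\mathbf{A_2})(i)$ replaces the need for a BMO property of $\int_0^\cdot\mu_sds$; since the generator is affine, $f_y=\mu$ and $f_z=\nu$ are bounded above by $K_t^{2\alpha}$-type and $K_t$-type quantities in the degenerate sense that the monotonicity and Lipschitz-in-$z$ conditions of $(\mathbf{BC_1})(i)$--$(ii)$ hold trivially, and $(\mathbf{A_2})(ii)$ gives $(\mathbf{BC_2})$ and $(\mathbf{BC_3})$. So the structural assumptions of Theorem~\ref{chapBiothm:diff:sl:BC} (in the form adapted via Theorem~\ref{thm:affine:existence}) are in force.

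Next I would translate the differentiability assumptions. Assumption $(\mathbf{sD^\infty})$ requires $\xi\in\mathbb D^{1,p}$ for all $p>1$ and $\omega\longmapsto f(t,\omega,y,z)\in L^2([0,T];\mathbb D^{1,p})$; since $f$ is affine and $(y,z)$ enters linearly, $D_uf(t,\omega,y,z)=D_u\lambda_t+yD_u\mu_t+zD_u\nu_t$, which lies in $L^2([0,T];\mathbb D^{1,p})$ precisely when $\lambda,\mu,\nu$ do — this is exactly $(\mathbf{DA_1})$. For $(\mathbf{sH_{1,\infty}})$, writing out the difference quotient for the affine $f$ gives
\[
\frac{f(s,\cdot+\varepsilon h,Y_s,Z_s)-f(s,\cdot,Y_s,Z_s)}{\varepsilon}
=\frac{\lambda_s\circ\tau_{\varepsilon h}-\lambda_s}{\varepsilon}
+Y_s\frac{\mu_s\circ\tau_{\varepsilon h}-\mu_s}{\varepsilon}
+Z_s\frac{\nu_s\circ\tau_{\varepsilon h}-\nu_s}{\varepsilon},
\]
so the $L^p$-convergence of the quotient to $\langle Df(s,\cdot,Y_s,Z_s),\dot h\rangle_{\mathfrak h}$ reduces, by Theorem~\ref{thm:newcarD12} applied to $\lambda_s$, $\mu_s$, $\nu_s$ separately together with the $\mathcal S_p/\mathcal H_p$-integrability of $Y,Z$ from Theorem~\ref{thm:affine:existence}, to $(\mathbf{DA_1})$ and a uniform-integrability argument of exactly the type carried out in the proof of Theorem~\ref{chapBiothm:diff:sl}. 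Finally $(\mathbf{sH_{2,\infty}})$ concerns the continuity of $f_y$ and $f_z$; but here $f_y=\mu$ and $f_z=\nu$ do not depend on $(y,z)$ at all, so both display lines in~\eqref{chap2assumpcts1SBC} collapse to $\|\mu_\cdot(\omega+\varepsilon_k h)-\mu_\cdot(\omega)\|_{L^2([0,T])}\to0$ and $\|\nu_\cdot(\omega+\varepsilon_k h)-\nu_\cdot(\omega)\|_{L^2([0,T])}\to0$ in probability, which is precisely $(\mathbf{DA_2})$.

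Having verified all hypotheses, I would conclude by Theorem~\ref{chapBiothm:diff:sl:BC} that $Y_t\in\mathbb D^{1,p}$ for every $t$ and $p>1$, that $Z\in L^2([t,T];\mathbb D^{1,p})$, and that a version of $(D_uY_t,D_uZ_t)$ solves~\eqref{chapBioeq:DY:thm:BC}; substituting $D_uf(s,Y_s,Z_s)=D_u\lambda_s+D_u\mu_s\,Y_s+D_u\nu_s\,Z_s$, $f_y(s,Y_s,Z_s)=\mu_s$, $f_z(s,Y_s,Z_s)=\nu_s$ into that equation yields~\eqref{chapBioeq:DY:thm:BC:affine}. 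The main obstacle I anticipate is not conceptual but bookkeeping: one must make sure the weakening of $(\mathbf{BC_1})$ to $(\mathbf{A_1})$--$(\mathbf{A_2})$ — which removes the BMO control of $\mu$ and is only legitimised by the affine structure and the re-derivation of the Briand--Confortola estimates carried out in Theorem~\ref{thm:affine:existence} — is genuinely sufficient to run the fixed-point / \emph{a priori}-estimate step of Theorem~\ref{chapBiothm:diff:sl:BC}'s proof for the linearised BSDE~\eqref{chapBioeq:DY,h:BC}, whose coefficients are the same $\mu,\nu$; since~\eqref{chapBioeq:DY,h:BC} is itself affine with the same $f_y,f_z$, Theorem~\ref{thm:affine:existence} applies verbatim to it, so this obstacle is handled, but it is the point that requires care.
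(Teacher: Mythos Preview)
Your proposal is correct and follows essentially the same approach as the paper: invoke Theorem~\ref{thm:affine:existence} for existence and estimates, verify that $(\mathbf{DA_1})$ and $(\mathbf{DA_2})$ imply $(\mathbf{sD^\infty})$, $(\mathbf{sH_{1,\infty}})$ and $(\mathbf{sH_{2,\infty}})$ for the affine generator, and then apply Theorem~\ref{chapBiothm:diff:sl:BC}. The paper's own proof is terser---it simply asserts that the three abstract hypotheses are ``automatically satisfied'' under $(\mathbf{DA_1})$ and $(\mathbf{DA_2})$---whereas you spell out the verification and flag the point about the weakened $(\mathbf{BC_1})$, but the route is the same.
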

\begin{proof} Under $\mathbf{(A_1)}$ and $\mathbf{(A_2)}$ and according to Theorem \ref{thm:affine:existence}, BSDE \eqref{chapBioedsraffine} admits a unique solution $(Y,Z)\in \mathcal S_p\times \mathcal H_p$ for any $p>1$. Now, Assumptions $\mathbf{(sD^\infty)}$, $\mathbf{(sH_{1,\infty})}$ and $\mathbf{(sH_{2,\infty})}$ are automatically satisfied under $\mathbf{(DA_1)}$ and $\mathbf{(DA_2)}$. Hence, by applying Theorem \ref{chapBiothm:diff:sl:BC}, we deduce that for any $p>1$ and $t\in [0,T]$, $Y_t\in \mathbb D^{1,p}$ and $Z\in L^2([t,T]; \mathbb D^{1,p}) $ and a version of $(DY_t,DZ_t)$ is given by the solution to BSDE \eqref{chapBioeq:DY:thm:BC:affine}.
\end{proof}

 \section{Existence of densities for the $Y$ component}\label{Mbio:densityY}
 \subsection{The stochastic Lipschitz case}\label{chapBioMbio:section:density:stolip}
We now aim at applying Bouleau and Hirsch's Criterion (see Theorem \ref{BH}) to the $Y$ component of the solution $(Y,Z)$ of BSDE \eqref{chapBioedsrslips}. We set the following assumption

\vspace{0.5em}
\noindent \textbf{(A$^{p,\beta}$)} Let $p$ be in $\in\left(\frac12, 1\right)$, $\beta> \text{max } \{2/(2p-1); 3\}$ and let Assumption $\mathbf{(ELK)^{p,\beta}}$ holds and assume moreover that $\int_0^\cdot \theta_s dW_s\in \text{BMO}(\mathbb P)$, where $\theta$ is defined in Assumption $\mathbf{(sL^{1,\beta})}$.

\vspace{0.5em}
\noindent 
Notice that under Assumption \textbf{(A$^{p,\beta}$)} or Assumption \textbf{(BC)}, we have proved that $Y_t\in \mathbb D^{1,p}$ and $Z\in L^2([0,T];\mathbb D^{1,p})$ for some $p>1$ and that their Malliavin derivatives $(D_rY_t, D_rZ_t)$ satisfy the linear BSDE \eqref{chapBioeq:DY:thm} (see Theorems \ref{chapBiothm:diff:sl} and \ref{chapBiothm:diff:sl:BC}). We then have the following theorem which gives conditions ensuring that, given a time $t$, the law of the first component $Y_t$ of the solution of the non-Markovian BSDE \eqref{chapBioedsrslips} admits a density.
\begin{Theorem}\label{Mbio:THM:density}
Let $\mathbf{(A^{p,\beta})}$ or $\mathbf{(BC)}$ hold. Denote by $(Y,Z)$ the unique solution of BSDE \eqref{chapBioedsrslips}.  If there exists $A\subset\Omega$ such that $\mathbb{P}(A)>0$ and one of the following two assumptions holds for $t\in (0,T]$ and $s\in [t,T]$

\begin{align*} 
(\textsc{sH+})&\hspace{0.8em} D_u \xi \geq  0,\; D_u f(s,Y_s,Z_s) \geq 0,\ \lambda(du)-a.e.,\text{ and }  D_u \xi > 0,\ \lambda(du)-a.e.\text{ on } A\\
&\\
(\textsc{sH-})&\hspace{0.8em}  D_u \xi \leq  0,\; D_u f(s,Y_s,Z_s) \leq 0,\ \lambda(du)-a.e.,\text{ and }  D_u \xi < 0,\ \lambda(du)-a.e.\text{ on } A,
\end{align*}
\noindent then the law of $Y_t$ is absolutely continuous with respect to Lebesgue measure.
\end{Theorem}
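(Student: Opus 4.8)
The plan is to apply the Bouleau--Hirsch criterion (Theorem~\ref{BH}) to the random variable $F=Y_t$. Under either $\textbf{(A}$^{p,\beta}$\textbf{)}$ or $\textbf{(BC)}$ we already know (Theorems~\ref{chapBiothm:diff:sl} and~\ref{chapBiothm:diff:sl:BC}, as recalled just before the statement) that $Y_t\in\mathbb D^{1,p}$ for some $p>1$ and that, for fixed $u$, a version of $s\longmapsto(D_uY_s,D_uZ_s)$ solves on $[t,T]$ the linear BSDE~\eqref{chapBioeq:DY:thm}. Hence the whole proof reduces to checking that $\|DY_t\|_{\mathfrak h}^2=\int_0^t|D_uY_t|^2\,du>0$, $\mathbb P$-a.s.

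For this, fix $u\le t$ and read~\eqref{chapBioeq:DY:thm} as a scalar linear BSDE in $s\in[t,T]$ with terminal value $D_u\xi$, linear coefficients $f_y(s,Y_s,Z_s)$ and $f_z(s,Y_s,Z_s)$ in front of $D_uY_s$ and $D_uZ_s$, and exogenous source $D_uf(s,Y_s,Z_s)$ (meaningful along $(Y,Z)$ after the identification of Remark~\ref{rk:pbDF}). The usual linearisation yields
$$D_uY_t=\mathbb E_t\!\left[\Gamma_{t,T}\,D_u\xi+\int_t^T\Gamma_{t,s}\,D_uf(s,Y_s,Z_s)\,ds\right],\qquad \Gamma_{t,s}:=\exp\!\left(\int_t^s f_y\,dr+\int_t^s f_z\,dW_r-\tfrac12\int_t^s f_z^{\,2}\,dr\right)>0,$$
where $f_y,f_z$ are evaluated at $(r,Y_r,Z_r)$. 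The legitimacy of this representation rests on the Dol\'eans--Dade exponential appearing in $\Gamma$ being a true, uniformly integrable martingale, which is exactly where the BMO hypotheses are used: under $\textbf{(A}$^{p,\beta}$\textbf{)}$ one has $|f_z|\le\theta$ with $\int_0^\cdot\theta_s\,dW_s\in\mathrm{BMO}(\mathbb P)$, and under $\textbf{(BC)}$ the bound $|f_z|\le K$ together with $\mathbf{(BC_1)}$ $(iii)$ plays the same role, so that Theorem~\ref{kazamakithm} applies in both cases. (One may equally bypass the explicit formula and invoke the comparison principle for linear BSDEs.)

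Under $(\textsc{sH+})$ every term inside the conditional expectation is non-negative, hence $D_uY_t\ge\mathbb E_t\!\left[\Gamma_{t,T}\,D_u\xi\,\mathbf 1_A\right]\ge0$ for $\lambda$-a.e. $u\le t$, $\mathbb P$-a.s. Since $\Gamma_{t,T}>0$ and, on $A$, $D_u\xi>0$ for $\lambda$-a.e. $u$, a Fubini argument reconciling the ``$\lambda(du)$-a.e.'' and ``$\mathbb P$-a.s.'' quantifiers shows that $A\subseteq\{\|DY_t\|_{\mathfrak h}>0\}$ up to a $\mathbb P$-null set: indeed, on the event $\{\|DY_t\|_{\mathfrak h}=0\}\in\mathcal F_t$ the vanishing of a non-negative random variable with zero $\mathcal F_t$-conditional expectation forces $\Gamma_{t,T}D_u\xi=0$, hence $D_u\xi=0$, for $\lambda$-a.e. $u\le t$, which is incompatible with the strict positivity on $A$. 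One then upgrades this to $\|DY_t\|_{\mathfrak h}>0$ $\mathbb P$-a.s. (equivalently, one applies the version of the Bouleau--Hirsch criterion localised on $\{\|DY_t\|_{\mathfrak h}>0\}$), and concludes the absolute continuity of the law of $Y_t$. The case $(\textsc{sH-})$ reduces to $(\textsc{sH+})$ by applying the result to $(-Y,-Z)$, the solution of~\eqref{chapBioedsrslips} with data $\big(-\xi,\,(s,y,z)\mapsto-f(s,-y,-z)\big)$, which then satisfies $(\textsc{sH+})$.

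The main obstacle is this last step: turning the pointwise sign information on the Malliavin derivative $D_uY_t$ into the almost sure strict positivity of $\|DY_t\|_{\mathfrak h}$ required by Theorem~\ref{BH}. The two tools that make it work are the strict positivity of the weight $\Gamma$, which lets one transfer the strict inequality from $D_u\xi$ to $D_uY_t$, and a careful bookkeeping of the joint measurability in $(u,\omega)$ together with the ``conditional expectation of a non-negative variable'' argument on the $\mathcal F_t$-set $\{\|DY_t\|_{\mathfrak h}=0\}$.
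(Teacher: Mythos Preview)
Your approach coincides with the paper's: both linearise BSDE~\eqref{chapBioeq:DY:thm} for $(D_uY,D_uZ)$, invoke the BMO hypothesis (on $\theta$ under $\mathbf{(A^{p,\beta})}$, on $K$ under $\mathbf{(BC)}$) together with Theorem~\ref{kazamakithm} to legitimise the stochastic exponential, and read off the sign of $D_uY_t$ from the resulting representation. The paper phrases this via an explicit Girsanov change to a measure $\mathbb Q$, writing
\[
D_uY_t=\mathbb E^{\mathbb Q}_t\Big[D_u\xi\,e^{\int_t^T f_y(s,Y_s,Z_s)\,ds}+\int_t^T e^{\int_t^s f_y(r,Y_r,Z_r)\,dr}\,D_uf(s,Y_s,Z_s)\,ds\Big];
\]
your formula with the weight $\Gamma_{t,s}$ under $\mathbb P$ is the same identity. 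Your reduction of $(\textsc{sH-})$ to $(\textsc{sH+})$ by passing to $(-Y,-Z)$ is a harmless variant of the paper's ``the proof is similar''.

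The substantive difference is at the step you yourself flag as ``the main obstacle''. Your careful argument establishes only that $A\subseteq\{\|DY_t\|_{\mathfrak h}>0\}$ up to a null set; the asserted ``upgrade'' to $\|DY_t\|_{\mathfrak h}>0$ $\mathbb P$-a.s.\ is not justified, and the localised Bouleau--Hirsch criterion gives absolute continuity only of the law of $Y_t$ restricted to $\{\|DY_t\|_{\mathfrak h}>0\}$, not of the full law. The paper does not argue this point more carefully: it simply writes $D_uY_t\ge\mathbb E^{\mathbb Q}_t[\mathbf 1_A\,D_u\xi\,e^{\int_t^T f_y}]>0$ and proceeds, i.e.\ it makes exactly the leap you are trying to justify, only more tersely. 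Note that this conditional expectation can vanish on any $\mathcal F_t$-event $B$ with $\mathbb P(A\cap B)=0$; without an extra hypothesis of the type $\mathbb P(A\mid\mathcal F_t)>0$ a.s.\ (automatic, for instance, when $A$ depends only on the increments of $W$ after time $t$), neither your argument nor the paper's closes this gap.
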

\begin{proof}
Under Assumptions \textbf{(A$^{p,\beta}$)} or \textbf{(BC)}, we know from respectively Theorems \ref{chapBioexistence:slipsch} and \ref{chapBiothm:diff:sl} or Theorems \ref{Mbio:thm:apriori:BC} and \ref{chapBiothm:diff:sl:BC}, that BSDE \eqref{Mbio:thm:apriori:BC} admits a unique solution $(Y,Z)$ which is Malliavin differentiable, whose Malliavin derivatives $(D_uY_t,D_uZ_t)_{0\leq u\leq t\leq T}$ are solutions to the following linear BSDE
\begin{align*}
D_u Y_t& = D_u \xi +\int_t^T \left(D_u f(s,Y_s,Z_s) +f_y(s,Y_s,Z_s)D_u Y_s+ f_z(s,Y_s,Z_s) D_u Z_s\right)ds\\
& -\int_t^T D_u Z_s dW_s,\; \forall 0\leq u\leq t\leq T, \; \mathbb P-a.s.
\end{align*}
Notice that for any $(t,y,z)\in [0,T]\times \mathbb R^2$, $|f_z(t,y,z)|\leq \theta_t$ under Assumption \textbf{(A$^{p,\beta}$)}  or $|f_z(t,y,z)|\leq K_t$ under Assumption \textbf{(BC)}. Hence, we can define a probability measure $\mathbb Q$ by
$$\frac{d\mathbb Q}{d\mathbb P}:= \mathcal E\left(\int_0^T f_z(s,Y_s,Z_s)dW_s\right)=e^{\int_0^Tf_z(s,Y_s,Z_s) dW_s-\frac12\int_0^T |f_z(s,Y_s,Z_s)|^2ds},$$ where $ \mathcal E\left(\int_0^\cdot f_z(s,Y_s,Z_s)dW_s\right)$ is a uniformly integrable martingale according to \cite[Theorem 2.3]{Kazamaki}. Changing the Brownian motion according to Girsanov's Theorem and using a linearisation (see \cite{ELKPengQuenez}), we obtain for any $0\leq u\leq t\leq T$
\begin{align*}
D_u Y_t&=\mathbb E^{\mathbb Q}_t \left[D_u \xi e^{\int_t^Tf_y(s,Y_s,Z_s) ds} +\int_t^Te^{\int_t^sf_y(s,Y_s,Z_s)du} D_u f(s,Y_s,Z_s)ds\right]\\
&\geq 0, \; du\otimes d\mathbb P-a.e.
\end{align*}
Moreover, let $A$ be such that $\mathbb P(A)>0$, and $D_u \xi>0$ on $A$. We obtain

\begin{align*}
 D_u Y_t&\geq  \mathbb E_t^\mathbb Q\left[\mathbf 1_A D_u \xi e^{\int_t^T f_y(s,Y_s,Z_s)ds}\right]\\
  &>0.
 \end{align*}
 
\noindent Thus, $\|DY_t \|^2_{L^2([0,T])}>0, \; \mathbb P-a.s.$ and from Theorem \ref{BH} the law of $Y_t$ is absolutely continuous with respect to the Lebesgue measure.\vspace{0.5em}

\noindent The proof under Assumption $($\textsc{sH-}$)$ is similar.
\end{proof}

 \subsection{Non-Markovian Lipschitz BSDEs} \label{chapBioMbio:section:dnesity:lip}
In this section, we study a particular class of stochastic Lipschitz BSDE \eqref{chapBioedsrslips}, which the generator is Lipschitz in its space variables with a nonnegative Lipschitz constant. We provide weaker conditions than Conditions (\textsc{sH+}) and (\textsc{sH-}) ensuring that the law of the component $Y_t$ of the solution to the corresponding Lipschitz BSDE has a density. We consider the following non-Markovian Lipschitz BSDE
\begin{equation}\label{bsde:lipschitz:bio}
Y_t=\xi+\int_t^T f(s,Y_s,Z_s) ds -\int_t^T Z_s dW_s, \; \forall t\in [0,T], \; \mathbb P-a.s.
\end{equation}
 where $\xi$ is an $\mathcal{F}_T$-measurable random variable and $f:[0,T]\times\Omega\times\real^2 \longrightarrow \real$ is an $\F$-progressively measurable process where as usual the $\omega$-dependence is omitted.
We set the following assumption
\begin{itemize}
\item[$\mathbf{(L)}$] 
\begin{itemize}
\item[$(i)$]  The map $(y,z)\longmapsto f(\cdot,y,z)$ is differentiable with continuous partial derivatives uniformly bounded by a positive constant $m$. We denote by $f_y$ (resp. $f_z$) the derivative of $f$ with respect to $y$ (resp. $z$). 
\item[$(ii)$] We have
$$\mathbb E\left[|\xi|^2+\int_0^T |f(s,0,0)|^2 ds\right]<+\infty.$$
\end{itemize}
\end{itemize}

\begin{Theorem}[\cite{ELKPengQuenez}]\label{chapBiolipschitz:existence} Under Assumption $\mathbf{(L)}$, there exists a unique pair of adapted processes $(Y,Z)$ which solves BSDE \eqref{bsde:lipschitz:bio} in $\mathcal S_2 \times \mathcal H_2$. 
\end{Theorem}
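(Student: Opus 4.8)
The plan is to establish existence and uniqueness for the Lipschitz BSDE \eqref{bsde:lipschitz:bio} under Assumption $\mathbf{(L)}$ by the now-classical fixed-point argument of \cite{PardouxPeng_AdaptedSolutionBSDE} and \cite{ELKPengQuenez}. First I would observe that $\mathbf{(L)}(i)$ implies the standard uniform Lipschitz property: for all $(t,y,y',z,z')\in[0,T]\times\mathbb R^4$,
\begin{equation*}
|f(t,y,z)-f(t,y',z')|\leq m|y-y'|+m|z-z'|,
\end{equation*}
by the mean value theorem applied coordinate-wise, since $f_y$ and $f_z$ are bounded by $m$. Together with $\mathbf{(L)}(ii)$, which gives $\xi\in L^2(\mathcal F_T)$ and $f(\cdot,0,0)\in\mathcal H_2$, this places us exactly in the framework of the Pardoux--Peng theorem, so the result is in fact a direct citation of \cite{ELKPengQuenez}; but since the statement is presented as a theorem I would include the short self-contained argument.

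The core of the proof is a contraction mapping argument on the Banach space $\mathcal S_2\times\mathcal H_2$ (or, more conveniently, on $\mathcal H_2\times\mathcal H_2$ equipped with an equivalent norm $\|(Y,Z)\|_\beta^2:=\mathbb E[\int_0^T e^{\beta s}(|Y_s|^2+|Z_s|^2)\,ds]$ for a suitably large $\beta$). Given $(U,V)\in\mathcal H_2\times\mathcal H_2$, I would define $(Y,Z)$ as the unique solution of the BSDE with driver $f(s,U_s,V_s)$, which exists because $s\mapsto f(s,U_s,V_s)$ belongs to $\mathcal H_2$ (using the Lipschitz bound and $f(\cdot,0,0)\in\mathcal H_2$): explicitly, $Y_t=\mathbb E_t[\xi+\int_t^T f(s,U_s,V_s)\,ds]$ and $Z$ is read off from the martingale representation theorem applied to the square-integrable martingale $\mathbb E_t[\xi+\int_0^T f(s,U_s,V_s)\,ds]$. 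Then I would apply It\^o's formula to $e^{\beta t}|Y_t-Y_t'|^2$ for the images $(Y,Z),(Y',Z')$ of two inputs $(U,V),(U',V')$, take expectations, use the Lipschitz bound together with Young's inequality to absorb cross terms, and choose $\beta$ large enough (depending only on $m$) to obtain a strict contraction in the $\|\cdot\|_\beta$ norm. Banach's fixed point theorem then yields a unique $(Y,Z)\in\mathcal H_2\times\mathcal H_2$, and a further application of the Burkholder--Davis--Gundy inequality upgrades the regularity of $Y$ to $\mathcal S_2$, giving the claimed solution in $\mathcal S_2\times\mathcal H_2$; uniqueness in this larger class follows from the same It\^o estimate applied directly to the difference of two solutions.

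The only mild subtlety — and the step I would flag as requiring a word of care rather than genuine difficulty — is the measurability and integrability of the composed driver $s\mapsto f(s,U_s,V_s)$: progressive measurability follows from progressive measurability of $f$ and of $(U,V)$, and the $L^2$ bound from $|f(s,U_s,V_s)|\leq |f(s,0,0)|+m|U_s|+m|V_s|$. Everything else is entirely standard, so I would keep the write-up brief and simply refer to \cite{ELKPengQuenez} for the details, noting that Assumption $\mathbf{(L)}$ is strictly stronger than what is needed (it additionally requires differentiability of $f$, which will only be used later for the Malliavin-differentiability results).
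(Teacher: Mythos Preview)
Your proposal is correct and follows the classical Pardoux--Peng contraction argument, which is precisely the content of the cited reference \cite{ELKPengQuenez}. Note that the paper itself gives no proof of this theorem: it is stated purely as a citation result, so there is nothing to compare against beyond observing that your sketch reproduces the standard argument underlying that reference.
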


\noindent We now turn to the Malliavin differentiability of the solution $(Y,Z)$ of BSDE \eqref{bsde:lipschitz:bio}. This problem was studied in \cite{PardouxPeng_BSDEQuasilinearParabolicPDE} in the Markovian case with Lipschitz coefficients (\textit{i.e.} when the data $\xi,f(t,\cdot,y,z)$ are functions of the solution of a Brownian SDE). It was extended in \cite{ELKPengQuenez} to the non-Markovian case with Lipschitz coefficients. This question was then studied in \cite{GeissSteinicke} for L\'evy driven BSDEs and in \cite{MastroliaPossamaiReveillac_MDBSDE} in which the conditions improve those in \cite{ELKPengQuenez} (see \cite[Section 6.3]{MastroliaPossamaiReveillac_MDBSDE}). In this section, we recall the results of \cite{MastroliaPossamaiReveillac_MDBSDE} where a new criterion ensuring that a random variable is in $\mathbb D^{1,2}$ has been proved. Set the following assumption

\begin{itemize}
\item[$\mathbf{(\textbf{\textsc{lD}})}$] 
\begin{itemize}
\item $\xi\in \mathbb D^{1,2}$, for any $(y,z)\in\mathbb R^2$, $(t,\omega)\longmapsto f(t,\omega,y,z)$ is in $L^2([0,T];\mathbb D^{1,2})$, $f(\cdot,y,z)$ and $Df(\cdot,y,z)$ are $\mathbb F$-progressively measurable, and
$$ \mathbb E\left[\int_0^T \|D_\cdot f(s,Y_s,Z_s)\|_{\mathfrak h}^2 ds\right]<+\infty. $$
\item There exists $p\in(1,2)$ such that for any $h\in H$
$$\hspace{-2em} \lim\limits_{\varepsilon \to 0}\ \E\left[ \left(\int_0^T\left|\frac{f(s,\cdot+\varepsilon h,Y_s,Z_s)-f(s,Y_s,Z_s)}{\varepsilon}-\langle Df(s,Y_s,Z_s),\dot h\rangle_{\mathfrak h}\right| ds\right)^p\right]=0,$$ 
\item Let $(\varepsilon_n)_{n\in \mathbb N}$ be a sequence in $(0,1]$ such that $\lim\limits_{n\to +\infty} \varepsilon_n=0$, and let  $(Y^n,Z^n)_n$ be a sequence of random variables which converges in $\mathcal S_2\times \mathcal H_2$ to some $(Y,Z)$. Then for all $h\in H$, the following convergences hold in probability
\begin{align}
\label{assumpcts1:Lipsch}
&\|f_y(\cdot,\omega+\varepsilon_n h,Y^n_\cdot,Z_\cdot) -f_y(\cdot,\omega,Y_\cdot,Z_\cdot)\|_{\mathfrak h}\underset{n\to+\infty}{\longrightarrow}0 \nonumber\\
& \|f_z(\cdot,\omega+\varepsilon_n h,Y^n_\cdot,Z^n_\cdot)-  f_z(\cdot,\omega,Y_\cdot,Z_\cdot)\|_{\mathfrak h}\underset{n\to+\infty}{\longrightarrow}0,\end{align}
or
\begin{align}
\label{assumpcts2:Lipsch}
\|f_y(\cdot,\omega+\varepsilon_n h,Y^n_\cdot,Z^n_\cdot) -f_y(\cdot,\omega,Y_\cdot,Z_\cdot)\|_{\mathfrak h}\underset{n\to +\infty}{\longrightarrow}0\nonumber\\
 \|f_z(\cdot,\omega+\varepsilon_n h,Y_\cdot,Z^n_\cdot)   -f_z(\cdot,\omega,Y_\cdot,Z_\cdot)\|_{\mathfrak h}\underset{n\to +\infty}{\longrightarrow}0.
\end{align}

\end{itemize}

\end{itemize}

\begin{Theorem}[Theorem 5.1 in \cite{MastroliaPossamaiReveillac_MDBSDE}]\label{Mbio:thm:d12} Let $(Y,Z)$ be the solution of BSDE \eqref{bsde:lipschitz:bio} under Assumption $\mathbf{(L)}$. Let Assumption $\mathbf{(\textbf{\textsc{lD}})}$ be satisfied, then for any $t\in [0,T]$, $Y_t\in \mathbb D^{1,2}$ and $Z\in L^2([t,T]; \mathbb D^{1,2})$. 
\vspace{0.5em}

\noindent Besides, by denoting $DY_t$ (resp. $DZ_t$) the Malliavin derivative of $Y_t$ (resp. $Z_t$), the pair $(DY,DZ)$ satisfies the following (linear) BSDE
\begin{align}\nonumber
D_u Y_t&=D_u \xi +\int_t^T \left(D_u f(s, Y_s,Z_s) + f_y(s,Y_s,Z_s) D_u Y_s +f_z(s,Y_s,Z_s)D_u Z_s\right)ds \\
&\label{edsrderiv:Mbio}-\int_t^T D_uZ_s dW_s, \; 0\leq u \leq t\leq T, \ \mathbb P-a.s.\end{align}

\end{Theorem}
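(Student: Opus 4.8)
The plan is to mirror the proof of Theorem \ref{chapBiothm:diff:sl}, replacing the $\beta$-weighted norms by the plain spaces $\mathcal S_2$ and $\mathcal H_2$ and using the classical a priori estimates for Lipschitz BSDEs in place of those of \cite{wang_ran_chen}. Fix $h\in H$ and $\varepsilon>0$. As in the proof of Theorem \ref{chapBiothm:diff:sl}, the pair $(Y\circ\tau_{\varepsilon h},Z\circ\tau_{\varepsilon h})$ satisfies the BSDE \eqref{bsde:lipschitz:bio} driven by the shifted data $(\xi\circ\tau_{\varepsilon h},f(\cdot,\cdot+\varepsilon h,\cdot,\cdot))$, so that, setting the difference quotients $Y^\varepsilon_s:=\varepsilon^{-1}(Y_s\circ\tau_{\varepsilon h}-Y_s)$, $Z^\varepsilon_s:=\varepsilon^{-1}(Z_s\circ\tau_{\varepsilon h}-Z_s)$ and $\xi^\varepsilon:=\varepsilon^{-1}(\xi\circ\tau_{\varepsilon h}-\xi)$, and Taylor-expanding $f$ to first order in its spatial variables, one obtains exactly as in \eqref{chapBioedsr_accroissement} that $(Y^\varepsilon,Z^\varepsilon)$ solves an affine BSDE with inhomogeneous term $\tilde A^\varepsilon:=\varepsilon^{-1}(f(\cdot,\cdot+\varepsilon h,Y,Z)-f(\cdot,\cdot,Y,Z))$ and with linear coefficients $\tilde A^{y,\varepsilon},\tilde A^{z,\varepsilon}$ that are averages of $f_y,f_z$ along interpolating arguments, hence bounded by $m$.

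I would then introduce the candidate limit, namely the linear BSDE \eqref{edsrderiv:Mbio} for $(\tilde Y^h,\tilde Z^h)$ with terminal value $\langle D\xi,\dot h\rangle_{\mathfrak h}$ and generator $\langle Df(\cdot,Y,Z),\dot h\rangle_{\mathfrak h}+\tilde Y^h f_y(\cdot,Y,Z)+\tilde Z^h f_z(\cdot,Y,Z)$. Since $f_y,f_z$ are bounded by $m$ and, by the first bullet of $\mathbf{(\textbf{\textsc{lD}})}$, $\langle Df(\cdot,Y,Z),\dot h\rangle_{\mathfrak h}\in\mathcal H_2$ and $\langle D\xi,\dot h\rangle_{\mathfrak h}\in L^2$, this BSDE has a unique solution in $\mathcal S_2\times\mathcal H_2$ by Theorem \ref{chapBiolipschitz:existence}. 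Subtracting it from the BSDE for $(Y^\varepsilon,Z^\varepsilon)$ and applying the standard Lipschitz a priori estimate to the difference, one bounds $\|Y^\varepsilon-\tilde Y^h\|_{\mathcal S_2}^2+\|Z^\varepsilon-\tilde Z^h\|_{\mathcal H_2}^2$ by a universal constant times $\mathbb E\big[|\xi^\varepsilon-\langle D\xi,\dot h\rangle_{\mathfrak h}|^2\big]+\mathbb E\big[(\int_0^T|\tilde A^\varepsilon_s-\langle Df(s,Y_s,Z_s),\dot h\rangle_{\mathfrak h}|\,ds)^2\big]+\mathbb E\big[(\int_0^T|\tilde Y^h_s|\,|\tilde A^{y,\varepsilon}_s-f_y(s,Y_s,Z_s)|\,ds)^2\big]+\mathbb E\big[(\int_0^T|\tilde Z^h_s|\,|\tilde A^{z,\varepsilon}_s-f_z(s,Y_s,Z_s)|\,ds)^2\big]$.

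The core of the proof is to send these four terms to $0$ as $\varepsilon\to0$. The first two vanish by $\xi\in\mathbb D^{1,2}$ and the second bullet of $\mathbf{(\textbf{\textsc{lD}})}$. For the last two, I would first observe that along any subsequence $\varepsilon_n\to0$ the interpolated arguments $Y+\theta(Y\circ\tau_{\varepsilon_n h}-Y)$ and $Z+\theta(Z\circ\tau_{\varepsilon_n h}-Z)$ converge to $(Y,Z)$ in $\mathcal S_2\times\mathcal H_2$, so that \eqref{assumpcts1:Lipsch} (or \eqref{assumpcts2:Lipsch}) gives $\tilde A^{y,\varepsilon_n}-f_y(\cdot,Y,Z)\to0$ and $\tilde A^{z,\varepsilon_n}-f_z(\cdot,Y,Z)\to0$ in $\mathfrak h$ in probability; combined with the domination $|\tilde A^{y,\varepsilon}-f_y|,|\tilde A^{z,\varepsilon}-f_z|\le 2m$ and with $\tilde Y^h\in\mathcal S_2$, $\tilde Z^h\in\mathcal H_2$, a de La Vall\'ee-Poussin and dominated convergence argument (as in the proof of Theorem \ref{chapBiothm:diff:sl}, but much simpler here because the derivatives are bounded) upgrades this to convergence of the expectations. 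I expect this uniform-integrability and subsequence bookkeeping — together with the fact that Theorem \ref{thm:newcarD12} only asks for convergence of the difference quotient in some $L^q$ with $q<2$ — to be the only delicate point.

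Finally, once $\|Y^\varepsilon-\tilde Y^h\|_{\mathcal S_2}+\|Z^\varepsilon-\tilde Z^h\|_{\mathcal H_2}\to0$ has been established, Theorem \ref{thm:newcarD12} applied with $F=Y_t$ yields $Y_t\in\mathbb D^{1,2}$ with $\langle\nabla Y_t,h\rangle_H=\tilde Y^h_t$, and hence $(D_uY_t,D_uZ_t)$ solves \eqref{edsrderiv:Mbio}; the membership $Z\in L^2([t,T];\mathbb D^{1,2})$ follows from the $\mathcal H_2$-convergence together with \cite[Lemma 2.3]{PardouxPeng_BSDEQuasilinearParabolicPDE}, exactly as at the end of the proof of Theorem \ref{chapBiothm:diff:sl}.
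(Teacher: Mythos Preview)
The paper does not give its own proof of this statement; it is simply quoted from \cite{MastroliaPossamaiReveillac_MDBSDE}. Your outline is nonetheless precisely the Lipschitz specialisation of the argument the present paper \emph{does} carry out for Theorems \ref{chapBiothm:diff:sl} and \ref{chapBiothm:diff:sl:BC}, and is the method of the cited reference, so in substance the approach is the intended one.

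One inconsistency in your sketch: you set up the stability estimate in $\mathcal S_2\times\mathcal H_2$ and then claim the first two error terms vanish ``by $\xi\in\mathbb D^{1,2}$ and the second bullet of $\mathbf{(\textbf{\textsc{lD}})}$''. But $\xi\in\mathbb D^{1,2}$ only yields, via Theorem \ref{thm:newcarD12}, convergence of $\xi^\varepsilon$ to $\langle D\xi,\dot h\rangle_{\mathfrak h}$ in $L^q$ for some $q<2$, and the second bullet of $\mathbf{(\textbf{\textsc{lD}})}$ is explicitly stated for some $p\in(1,2)$, not for $p=2$. You therefore cannot control the $\mathcal S_2\times\mathcal H_2$ difference; the correct move --- and the one taken in the proof of Theorem \ref{chapBiothm:diff:sl} with $2p\in(1,2)$ --- is to run the Lipschitz a priori estimate in $\mathcal S_p\times\mathcal H_p$ for this $p\in(1,2)$ (such $L^p$ estimates are available e.g.\ from \cite{BriandDelyonHuPardouxStoica}). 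You acknowledge at the end that Theorem \ref{thm:newcarD12} only needs $L^q$, $q<2$, so this is a presentational slip rather than a structural gap, but the estimate as written at order $2$ does not follow from the hypotheses.
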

\noindent We now aim at applying Bouleau and Hirsch's Criterion (see Theorem 2.1.2 in \cite{Nualart_Book}) to the $Y$ component of the solution $(Y,Z)$ of BSDE \eqref{bsde:lipschitz:bio}. The existence of a density for $Y_t$ when $t\in (0,T]$ when $f$ is Lipschitz in its space variable was solved in \cite{AntonelliKohatsu} in the Markovian case. We want to extend this result to the non-Markovian case. The following theorem gives conditions which ensure that, given a time $t$, the first component $Y_t$ of the solution of the non-Markovian BSDE \eqref{bsde:lipschitz:bio} admits a density under $(\mathbf{L})$ and $\mathbf{(\textbf{\textsc{lD}})}$. These conditions are similar to those of \cite[Theorem 3.1]{AntonelliKohatsu} in the Lipschitz Markovian case.
Following \cite{AntonelliKohatsu, AbouraBourguin,MastroliaPossamaiReveillac_Density}, let $A$ be a subset of $\Omega$ such that $\mathbb P(A)>0$. We set 
\begin{align*}
 \underline{d\xi}&:=\max \left\{ M\in \mathbb R, \;  D_u \xi \geq M,\  du\otimes \mathbb P-a.e.\right\} ,\\
 \underline{df}(t)&:=\max \left\{  M\in \mathbb R, \; \forall s\in [t,T] \ D_u f(s,Y_s,Z_s) \geq M,\  du\otimes \mathbb P-a.e.\right\},\\
 \underline{d\xi}_A:&=\max \left\{  M\in \mathbb R, \;  D_u\xi \geq M,\ du-a.e. \text{ on } A.\right\},
\end{align*}
\begin{align*}
 \overline{d\xi}&:=\min \left\{  M\in \mathbb R, \; D_u \xi \leq M,\  du\otimes\mathbb P-a.e.\right\} ,\\
\overline{df}(t)&:=\min \left\{  M\in \mathbb R, \; \forall s\in [t,T] \ D_u f(s,Y_s,Z_s) \leq M,\  du\otimes \mathbb P-a.e.\right\} ,\\
\overline{d\xi}_A&:=\min \left\{  M\in \mathbb R, \; D_u \xi \leq M,\  du-a.e. \text{ on } A\right\}.
\end{align*}

\begin{Theorem}\label{chapBiothm:densityb}
Let $(Y,Z)$ be the solution of BSDE \eqref{bsde:lipschitz:bio} under Assumptions $\mathbf{(L)}$ and $\mathbf{(\textbf{\textsc{lD}})}$. Fix some $t\in(0,T]$. If there exists $A\subset\Omega$ such that $\mathbb{P}(A)>0$ and one of the two following assumptions holds

\begin{align*}
 &(\textsc{H+})\quad \begin{cases}
\displaystyle \underline{d\xi}e^{-\text{sgn}(\underline{d\xi})m(T-t)}+\underline{df}(t)\int_t^T e^{-\text{sgn}(\underline{df}(t))m(s-t)}ds\geq0 \\
\displaystyle \underline{d\xi}_Ae^{-\text{sgn}(\underline{d\xi}_A)m(T-t)}+\underline{df}(t)\int_t^T e^{-\text{sgn}(\underline{df}(t))m(s-t)}ds>0
\end{cases}\\[0.3em]
&(\textsc{H-})\quad \begin{cases}
\displaystyle  \overline{d\xi}e^{-\text{sgn}(\overline{d\xi})m(T-t)}+\overline{df}(t)\int_t^T e^{-\text{sgn}(\overline{df}(t))m(s-t)}ds\leq 0 \\
\displaystyle \overline{d\xi}^Ae^{-\text{sgn}(\overline{d\xi}^A)m(T-t)}+\overline{df}(t)\int_t^T e^{-\text{sgn}(\overline{df}(t))m(s-t)}ds<0,
\end{cases}
\end{align*}

then $Y_t$ has a law absolutely continuous with respect to the Lebesgue measure.
\end{Theorem}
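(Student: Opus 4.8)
The plan is to reduce the statement to the Bouleau--Hirsch criterion, Theorem~\ref{BH}, applied to $F=Y_t$: it suffices to show that $\|DY_t\|_{\mathfrak h}>0$, $\mathbb P$-a.s. By Theorem~\ref{chapBiolipschitz:existence} the solution $(Y,Z)$ exists and is unique, and by Theorem~\ref{Mbio:thm:d12}, under $(\mathbf L)$ and $(\mathbf{\textsc{lD}})$, we already know that $Y_t\in\mathbb D^{1,2}$, $Z\in L^2([t,T];\mathbb D^{1,2})$, and that $(D_uY,D_uZ)_{0\le u\le t\le T}$ solves the linear BSDE \eqref{edsrderiv:Mbio}. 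Since $Y_t$ is $\mathcal F_t$-measurable, $D_uY_t=0$ for $u>t$, so $\|DY_t\|_{\mathfrak h}^2=\int_0^t (D_uY_t)^2\,du$ and the entire task is to bound this integrand from below.

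First I would linearise \eqref{edsrderiv:Mbio}. By $(\mathbf L)(i)$ we have $|f_z(s,Y_s,Z_s)|\le m$, so $\int_0^\cdot f_z(s,Y_s,Z_s)\,dW_s$ is a bounded, hence $\mathrm{BMO}$, martingale and, by Theorem~\ref{kazamakithm}, $\mathcal E\big(\int_0^\cdot f_z(s,Y_s,Z_s)\,dW_s\big)$ is a uniformly integrable martingale; it defines an equivalent measure $\mathbb Q$ under which $W^{\mathbb Q}:=W-\int_0^\cdot f_z(s,Y_s,Z_s)\,ds$ is a Brownian motion. Absorbing the $D_uZ$ term into $W^{\mathbb Q}$ and solving the resulting affine BSDE with the integrating factor $\Gamma^s_t:=\exp\big(\int_t^s f_y(r,Y_r,Z_r)\,dr\big)$, exactly as in the proof of Theorem~\ref{Mbio:THM:density}, I obtain for a.e.\ $u\in[0,t]$
\begin{equation*}
D_uY_t=\mathbb E^{\mathbb Q}_t\!\left[D_u\xi\,\Gamma^T_t+\int_t^T \Gamma^s_t\,D_uf(s,Y_s,Z_s)\,ds\right].
\end{equation*}

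Next comes the sign bookkeeping, which is the core of the argument under $(\textsc{H+})$. Since $|f_y(s,Y_s,Z_s)|\le m$ by $(\mathbf L)(i)$, one has $e^{-m(s-t)}\le\Gamma^s_t\le e^{m(s-t)}$ for $t\le s\le T$; combining these bounds with $D_u\xi\ge\underline{d\xi}$ and $D_uf(s,Y_s,Z_s)\ge\underline{df}(t)$ for $s\in[t,T]$, and distinguishing the cases $\underline{d\xi}\ge0$ vs.\ $\underline{d\xi}<0$ and $\underline{df}(t)\ge0$ vs.\ $\underline{df}(t)<0$ (the sign deciding whether the worst $\Gamma$ is the lower or the upper exponential bound, which is precisely where the factors $e^{-\text{sgn}(\cdot)m(\cdot)}$ come from), the bracket above is bounded below, $du\otimes d\mathbb P$-a.e., by $\underline{d\xi}\,e^{-\text{sgn}(\underline{d\xi})m(T-t)}+\underline{df}(t)\int_t^T e^{-\text{sgn}(\underline{df}(t))m(s-t)}\,ds\ge0$, the first line of $(\textsc{H+})$; this already yields $D_uY_t\ge0$ for a.e.\ $u$, a.s. Redoing the estimate on $A$, where $D_u\xi\ge\underline{d\xi}_A$, shows that the bracket is moreover $\ge\mathbf 1_A\,c_A$, $du\otimes d\mathbb P$-a.e., with $c_A:=\underline{d\xi}_A\,e^{-\text{sgn}(\underline{d\xi}_A)m(T-t)}+\underline{df}(t)\int_t^T e^{-\text{sgn}(\underline{df}(t))m(s-t)}\,ds>0$ by the second line of $(\textsc{H+})$. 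Hence $D_uY_t\ge c_A\,\mathbb Q(A\,|\,\mathcal F_t)$ for a.e.\ $u$, and since $t>0$,
\begin{equation*}
\|DY_t\|_{\mathfrak h}^2=\int_0^t (D_uY_t)^2\,du\ \ge\ t\,c_A^2\,\mathbb Q(A\,|\,\mathcal F_t)^2\ >\ 0,\qquad \mathbb P\text{-a.s.},
\end{equation*}
so Theorem~\ref{BH} applies. The case $(\textsc{H-})$ would be handled by the mirror-image argument: replacing the lower bounds by the upper bounds $\overline{d\xi}$, $\overline{df}(t)$, $\overline{d\xi}_A$ and reversing all inequalities, one gets $D_uY_t\le \overline{c_A}\,\mathbb Q(A\,|\,\mathcal F_t)<0$ for a.e.\ $u$, a.s., whence the same conclusion.

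The step I expect to demand the most care is the passage from the sign hypotheses to the strict positivity of $\|DY_t\|_{\mathfrak h}$. One must (i) run the four-way sign analysis cleanly enough that exactly the exponents $-\text{sgn}(\cdot)m(\cdot)$ of $(\textsc{H}\pm)$ survive, and in particular that it is $\underline{d\xi}$, not $\underline{d\xi}_A$, that controls the integrand off $A$, so that the refined bound $\mathbf 1_A c_A$ is legitimate; and (ii) argue, as in the proof of Theorem~\ref{Mbio:THM:density}, that the localised contribution $c_A\,\mathbb Q(A\,|\,\mathcal F_t)$ is genuinely positive $\mathbb P$-a.s. Everything else — existence and uniqueness, Malliavin differentiability of $(Y,Z)$, the $\mathrm{BMO}$/Girsanov change of measure and the linear-BSDE representation — is quoted directly from Theorems~\ref{chapBiolipschitz:existence}, \ref{Mbio:thm:d12}, \ref{kazamakithm} and the proof of Theorem~\ref{Mbio:THM:density}.
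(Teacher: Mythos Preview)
Your proposal is correct and follows essentially the same route as the paper: apply Theorem~\ref{Mbio:thm:d12} to get the linear BSDE \eqref{edsrderiv:Mbio}, perform the Girsanov change of measure using that $|f_z|\le m$, obtain the representation $D_uY_t=\mathbb E^{\mathbb Q}_t[D_u\xi\,\Gamma^T_t+\int_t^T\Gamma^s_t\,D_uf(s,Y_s,Z_s)\,ds]$, then bound the bracket below via the sign analysis that produces the $e^{-\text{sgn}(\cdot)m(\cdot)}$ factors and invoke Theorem~\ref{BH}. Your treatment of the strict positivity via $c_A\,\mathbb Q(A\mid\mathcal F_t)$ is a slightly more explicit version of the paper's own step $D_uY_t\ge\underline{d\xi}_A\,\mathbb E_t^{\mathbb Q}[\mathbf 1_A e^{\int_t^T f_y}]+\underline{df}(t)\,\mathbb E_t^{\mathbb Q}[\int_t^T e^{\int_t^s f_y}ds]>0$, and your caution about point~(ii) is well placed, since the paper makes exactly the same leap there.
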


\begin{proof} The proof follows the same line than the one of  \cite[Theorem 3.1]{AntonelliKohatsu}. Assume that $($\textsc{H+}$)$ holds. We aim at applying Bouleau-Hirsch Criterion (Theorem 2.1.2 in \cite{Nualart_Book}). From Theorem \ref{Mbio:thm:d12}, $Y_t \in \mathbb D^{1,2}$ and $Z\in L^2([t,T]; \mathbb D^{1,2})$. Let $ 0\leq u\leq t\leq T$, using the linearisation method for BSDE (see  \cite{ELKPengQuenez}) we have
\begin{align*}
D_u Y_t&=D_u \xi +\int_t^T \left(D_uf(s,Y_s,Z_s) + f_y(s,Y_s,Z_s) D_u Y_s +f_z(s,Y_s,Z_s)D_u Z_s\right)ds\\
&\quad -\int_t^T D_u Z_s dW_s, \\
&=\mathbb E_t^\mathbb Q\left[ D_u\xi e^{\int_t^T f_y(s,Y_s,Z_s)ds}+\int_t^T e^{\int_t^s f_y(\alpha,Y_\alpha,Z_\alpha)d\alpha}D_uf(s,Y_s,Z_s)ds\right]\\
&\geq \underline{d\xi} \mathbb E_t^\mathbb Q\left[e^{\int_t^T f_y(s, Y_s,Z_s)ds}\right] + \underline{df(t)} \mathbb E_t^\mathbb Q\left[\int_t^Te^{\int_t^s f_y(\alpha,Y_\alpha,Z_\alpha)d\alpha}ds\right]\\
&\geq \underline{d\xi} e^{-\text{sgn}(\underline{d\xi})m(T-t)} + \underline{df(t)} \int_t^Te^{-\text{sgn}(\underline{df}(s))m(s-t)}ds.  \end{align*}
Hence, $D_u Y_t\geq 0, \; du\otimes \mathbb P-a.e$. Moreover, let $A$ be such that $\mathbb P(A)>0$, we obtain

\begin{align*}
 D_u Y_t&\geq \underline{d\xi}_A \ \mathbb E_t^\mathbb Q\left[\mathbf 1_A e^{\int_t^T f_y(s,Y_s,Z_s)ds}\right] +\underline{df}(t)\mathbb E_t^\mathbb Q\left[\int_t^Te^{\int_t^s f_y(\alpha,Y_\alpha,Z_\alpha)d\alpha}ds\right]\\
 &>0.
 \end{align*}
 
\noindent Thus, $\|DY_t \|^2_{L^2([0,T])}>0, \; \mathbb P-a.s.$ and from Theorem \ref{BH} the law of $Y_t$ is absolutely continuous with respect to the Lebesgue measure.\vspace{0.5em}

\noindent The proof under Assumption $($\textsc{H-}$)$ is similar.
\end{proof}
\subsection{Affine BSDEs with unbounded coefficients}\label{chapBioMbio:section:malliavindiff:affine}
\noindent We study the existence of a density for the first component of the solution to BSDE \eqref{chapBioedsraffine}. We set 
\begin{align*}
 \underline{d\lambda}(t)&:=\max \left\{  M\in \mathbb R, \; \forall s\in [t,T] \ D_u\lambda_s \geq M,\  du\otimes \mathbb P-a.e.\right\} ,\\
\end{align*}
and
\begin{align*}
\overline{d\lambda}(t)&:=\min \left\{  M\in \mathbb R, \; \forall s\in [t,T] \ D_u\lambda_s \leq M,\  du\otimes \mathbb P-a.e.\right\} .\\
\end{align*}
We set also the following assumption
\begin{itemize}
\item[$(\mathcal P+)$] Let $(Y_t,Z_t)$ be the unique solution of BSDE \eqref{chapBioedsraffine} for any $t\in [0,T]$. Then, for any $s\in [0,t]$,
$$D_u \mu_s Y_s+D_u \nu_s Z_s\geq 0, du\otimes \mathbb P-a.s. $$
 \item[$(\mathcal P-)$] Let $(Y_t,Z_t)$ be the unique solution of BSDE \eqref{chapBioedsraffine} for any $t\in [0,T]$. Then, for any $s\in [0,t]$,
$$D_u\mu_s Y_s+D_u\nu_s Z_s\leq 0, du\otimes \mathbb P-a.s. $$
\end{itemize}
\begin{Remark}\label{rem:pb:signZ}
Notice that if $(Y,Z)$ is the unique solution to BSDE \eqref{chapBioedsraffine}, hence using a linearisation method (see \cite{ELKPengQuenez})
$$Y_t:=\mathbb E_t^\mathbb Q\left[ \xi e^{\int_t^T \mu_s ds } +\int_t^T \lambda_s e^{\int_s^t \mu_\alpha d\alpha} ds\right], $$ which is non negative as soon as $\xi$ and $\lambda$ are non negative, where 
$$\frac{d\mathbb Q}{d\mathbb P}:= e^{\int_0^T \nu_sdW_s -\frac12\int_0^T |\nu_s|^2ds},$$ as soon as $\mathcal E\left(\int_0^\cdot \nu_s dW_s\right)$ is a martingale (see Assumption $($\textsc{BMO}$)$ below together with \cite[Theorem 2.3]{Kazamaki}). Thus, as soon as $Y$ is non negative, if $\mu$ is a semi-martingale, we can give conditions which ensures that $D \mu_t$ is non negative using a Lamperti transform (see e.g. \cite[Step 1 of the proof of Theorem 3.3]{AbouraBourguin}).\\

\noindent Concerning the $Z$ process, conditions ensuring that $Z$ is non negative has been obtained in \cite{dosreis_dosreis} in the Markovian case only. In the non-Markovian case, this problem is still open, as far as we know. However, if $\nu$ is deterministic, conditions $(\mathcal P+)$ and $(\mathcal P-)$ can be simplified (see Section \ref{Mbio:section:finance}).

\end{Remark}

\begin{Theorem}\label{chapBiothm:densityaffine} Assume that $\mathbf{(A_1)}, \mathbf{(A_2)}$, $\mathbf{(DA_1)}$ and $\mathbf{(DA_2)}$ hold and let $(Y,Z)$ be the solution of BSDE \eqref{chapBioedsraffine}. If there exists $A\subset\Omega$ such that $\mathbb{P}(A)>0$ and one of the two following assumptions holds

\vspace{0.5em}
\noindent $\mathbf{(\textbf{\textsc{aH}}+)}$ $D_u \xi \geq  0,\ \lambda(du)-a.e.,\;   D_u \xi > 0,\ \lambda(du)-a.e.\text{ on } A,\; \underline{d\lambda}(t) \geq 0 \text{ and } (\mathcal P+) \text{ holds}$,

\vspace{0.5em}
\noindent $\mathbf{(\textbf{\textsc{aH}}-)}$ $D_u \xi \leq  0,\ \lambda(du)-a.e.,\;   D_u \xi < 0,\ \lambda(du)-a.e.\text{ on } A,\; \underline{d\lambda}(t) \leq 0  \text{ and } (\mathcal P-) \text{ holds},$
then $Y_t$ has a law absolutely continuous with respect to the Lebesgue measure.
\end{Theorem}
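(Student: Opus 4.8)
The plan is to mimic the proof of Theorem \ref{Mbio:THM:density}, exploiting the affine structure of BSDE \eqref{chapBioedsraffine} to obtain an explicit representation for the Malliavin derivative $D_u Y_t$ and then to apply the Bouleau-Hirsch Criterion (Theorem \ref{BH}). First I would invoke Theorem \ref{chapBiothm:diff:affine:BC}: under $\mathbf{(A_1)}$, $\mathbf{(A_2)}$, $\mathbf{(DA_1)}$ and $\mathbf{(DA_2)}$, for every $t\in(0,T]$ the random variable $Y_t$ belongs to $\mathbb{D}^{1,p}$ for all $p>1$, $Z\in L^2([t,T];\mathbb{D}^{1,p})$, and a version of $(D_uY_t,D_uZ_t)_{0\le u\le t}$ solves the affine BSDE \eqref{chapBioeq:DY:thm:BC:affine}. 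Since $\int_0^\cdot \nu_s\,dW_s$ is a BMO-martingale by $\mathbf{(A_1)}$, Theorem \ref{kazamakithm} guarantees that $\mathcal E\big(\int_0^\cdot \nu_s\,dW_s\big)$ is a uniformly integrable martingale, so we may define the probability measure $\mathbb{Q}$ via $\frac{d\mathbb{Q}}{d\mathbb{P}}:=\mathcal E\big(\int_0^T \nu_s\,dW_s\big)$, and under $\mathbb{Q}$ the process $W^{\mathbb{Q}}_t:=W_t-\int_0^t\nu_s\,ds$ is a Brownian motion.

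Next I would linearise \eqref{chapBioeq:DY:thm:BC:affine}. Rewriting the driver as $D_u\lambda_s + D_u\mu_s\,Y_s + D_u\nu_s\,Z_s + \mu_s\,D_uY_s + \nu_s\,D_uZ_s$ and absorbing the $\nu_s\,D_uZ_s$ term into the stochastic integral through the change of measure, the standard variation-of-constants formula for linear BSDEs (see \cite{ELKPengQuenez}) yields, for $0\le u\le t\le T$,
\begin{align*}
D_u Y_t=\mathbb{E}_t^{\mathbb{Q}}\!\left[D_u\xi\, e^{\int_t^T\mu_s\,ds}+\int_t^T e^{\int_t^s\mu_\alpha\,d\alpha}\big(D_u\lambda_s+D_u\mu_s\,Y_s+D_u\nu_s\,Z_s\big)\,ds\right].
\end{align*}
Assume $\mathbf{(\textsc{aH}+)}$. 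Then $D_u\xi\ge 0$ $\lambda(du)$-a.e., $D_u\lambda_s\ge \underline{d\lambda}(t)\ge 0$ for $s\in[t,T]$, and $(\mathcal P+)$ gives $D_u\mu_s\,Y_s+D_u\nu_s\,Z_s\ge 0$, $du\otimes\mathbb{P}$-a.e.; since $e^{\int_t^s\mu_\alpha\,d\alpha}>0$ and conditional expectation is monotone, every term above is nonnegative, hence $D_uY_t\ge 0$, $du\otimes d\mathbb{P}$-a.e. Using the set $A$ with $\mathbb{P}(A)>0$ and $D_u\xi>0$ $\lambda(du)$-a.e. on $A$, one bounds $D_uY_t\ge \mathbb{E}_t^{\mathbb{Q}}\big[\mathbf 1_A\,D_u\xi\,e^{\int_t^T\mu_s\,ds}\big]>0$ on a set of positive probability, so $\|DY_t\|_{\mathfrak h}^2=\int_0^t |D_uY_t|^2\,du>0$, $\mathbb{P}$-a.s. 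Theorem \ref{BH} then gives that the law of $Y_t$ is absolutely continuous with respect to Lebesgue measure. The case $\mathbf{(\textsc{aH}-)}$ is symmetric, replacing the inequalities by their reverses and using $(\mathcal P-)$ together with $\underline{d\lambda}(t)$ replaced by $\overline{d\lambda}(t)\le 0$.

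The main obstacle is not the linearisation itself but making the variation-of-constants representation rigorous under the weakened integrability: the discount factor $e^{\int_t^s\mu_\alpha\,d\alpha}$ is only known to have moments of all orders (via $\mathbf{(A_2)}(i)$) rather than to be bounded, and $\mu$ is not required to generate a BMO-martingale. One must therefore check, using $\mathbf{(A_2)}$, $\mathbf{(DA_1)}$, the a priori estimate \eqref{apriori:briandconfortola} applied to the linear BSDE \eqref{chapBioeq:DY:thm:BC:affine} (which holds by Theorem \ref{chapBiothm:diff:affine:BC}), together with H\"older's inequality, that all the $\mathbb{Q}$-conditional expectations above are finite and that the representation formula is legitimate — essentially the same integrability bookkeeping already carried out in the proof of Theorem \ref{thm:affine:existence}. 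The positivity and density conclusions are then immediate consequences of monotonicity.
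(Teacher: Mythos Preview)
Your proposal is correct and follows essentially the same route as the paper: invoke Theorem \ref{chapBiothm:diff:affine:BC} for Malliavin differentiability, pass to $\mathbb Q$ via the BMO property of $\int_0^\cdot \nu_s\,dW_s$ and Girsanov, derive the closed-form representation \eqref{dY_affine} by linearisation, and then apply Bouleau--Hirsch exactly as in Theorems \ref{Mbio:THM:density} and \ref{chapBiothm:densityb}. The paper's proof is terser (it writes down \eqref{dY_affine} and then says ``by reproducing the proof of Theorem \ref{chapBiothm:densityb}''), whereas you spell out the sign argument and flag the integrability bookkeeping for the unbounded discount factor $e^{\int_t^s\mu_\alpha d\alpha}$; note also that the positivity on $A$ should yield $D_uY_t>0$ $\mathbb P$-a.s.\ (not merely on a set of positive probability), which is what Bouleau--Hirsch requires.
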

\begin{proof} We prove the previous theorem under Assumption $\mathbf{(\textbf{\textsc{aH+}})}$. Let $t\in (0,T]$. We know from Theorem \ref{thm:affine:existence} and Theorem \ref{chapBiothm:diff:affine:BC} that BSDE \eqref{chapBioedsraffine} admits a unique solution $(Y_t,Z_t)_{t\in [0,T]}$ such that $Y_t\in \mathbb D^{1,p}$ and $Z\in L^2([t,T]; \mathbb D^{1,p})$ for any $p>1$ with derivatives $(DY_t,DZ_t)$ satisfying the following linear BSDE
\begin{align*}\nonumber
D_u Y_t&=D_u \xi +\int_t^T (D_u \lambda_s+ D_u\mu_s Y_s + D_u \nu_s Z_s+ \mu_s D_u Y_s +\nu_s D_u Z_s) ds \\
&-\int_t^T D_uZ_s dW_s, \; 0\leq u\leq t\leq T, \ \mathbb P-a.s.\end{align*}

\noindent Set $\mathbb Q$ a probability measure defined by
$$\frac{d\mathbb Q}{d\mathbb P}:= \mathcal E\left(\int_0^T \nu_sdW_s\right)=e^{\int_0^T \nu_s dW_s-\frac12\int_0^T |\nu_s|^2ds},$$ where $ \mathcal E\left(\int_0^\cdot \nu_sdW_s\right)$ is a uniformly martingale according to \cite[Theorem 2.3]{Kazamaki}. Changing the Brownian motion according to Girsanov's Theorem and using a linearisation, we obtain for any $0\leq u\leq t\leq T$

\begin{equation}\label{dY_affine}
D_u Y_t=\mathbb E^{\mathbb Q}_t \left[D_u \xi e^{\int_t^T \mu_s ds} +\int_t^T (D_u \lambda_s+ D_u\mu_s Y_s + D_u \nu_s Z_s)e^{\int_t^s \mu_\alpha d\alpha}ds\right].
\end{equation}
Thus, by reproducing the proof of Theorem \ref{chapBiothm:densityb}, we show that $\|DY_t \|_{L^2([0,T])}>0, \; \mathbb P-a.s.$ and from Theorem \ref{BH} the law of $Y_t$ is absolutely continuous with respect to the Lebesgue measure.\\

\noindent The proof under $\mathbf{(\textbf{\textsc{aH-}})}$ is similar. 
\end{proof}
\begin{Remark} \label{chapBioremarksigne}Under the same assumptions than in the previous theorem and assuming that $($\textsc{aD+}$)$ holds (resp. $($\textsc{aD-}$)$ holds), the proof shows that in fact $D_uY_t\geq 0, du\otimes\mathbb P-a.e.$ (resp. $D_uY_t\leq 0, du\otimes\mathbb P-a.e.$). 
\end{Remark}
\section{Existence of a density for the $Z$-component: a still open problem in the non-Markovian case}
\label{chapBioMbio:section:discussion}
In this section we turn to conditions ensuring the existence of densities for the laws of $Z_t$ components of solutions to stochastic Lipschitz BSDEs. We begin to investigate this problem for a particular class of stochastic Lipschitz BSDE with a linear generator with respect to the $z$ component by following the same proofs that in \cite{AbouraBourguin} and we explain why we are not able to extend results obtained in \cite{MastroliaPossamaiReveillac_Density} to the non-Markovian case for general stochastic Lipschitz BSDEs.

 \subsection{A result for BSDE with linear generator with respect to $z$}\label{soussection:z:linear}
Consider the following BSDE

 \begin{equation}\label{chapBioedsr:linearbsde}
Y_t=\xi+\int_t^T(\tilde{f}(s,Y_s)+\theta_s Z_s)ds -\int_t^T Z_s dW_s, \; \forall t\in [0,T], \; \mathbb P-a.s.
\end{equation}

\noindent where $\theta$ is a square integrable adapted process. In this case, recall that under $\mathbf{(EKH)^{p,\beta}}$ for any $p$ be in $\in\left(\frac12, 1\right)$, $\beta> \text{max } \{2/(2p-1); 3\}$ or $\mathbf{(BC)}$, according to Theorem \ref{chapBiothm:diff:sl} or respectively Theorem \ref{chapBiothm:diff:sl:BC}, BSDE \eqref{chapBioedsr:linearbsde} admits a unique solution in $\mathcal S_p\times \mathcal H_p$ and $t\in [0,T]$, $Y_t\in \mathbb D^{1,2}$ and $Z\in L^2([t,T]; \mathbb D^{1,2}) $. Besides, a version of 
$$(D_u Y_t, D_u Z_t)_{0\leq u\leq t, 0\leq t\leq T}$$ is given by the solution to the affine BSDE:

\begin{equation}\label{edsr:linear:densityz} D_u Y_t = D_u\xi +\int_t^T \left(D_u \tilde f(s,Y_s)+f_y(s,Y_s)D_uY_s+\theta_sD_u Z_s\right)ds -\int_t^T D_u Z_s dW_s.
\end{equation}
As explain in Remark \ref{rem:pb:signZ}, in order to obtain a sign for the Malliavin derivative of the $Y$ component of the solution to an affine BSDE with unbounded coefficients when we have no information on the sign of the $Z$ process, we must assume that $\theta$ is deterministic to apply Theorem \ref{chapBiothm:densityaffine}. Thus, we set the following assumption
\begin{itemize}
\item[$\mathbf{(\Theta)}$] The process $\theta$ defined in BSDE \eqref{chapBioedsr:linearbsde} is deterministic.
\end{itemize}

\noindent Let now $Y$ be the first component of the solution to BSDE \eqref{edsr:linear:densityz}. We set for any $0\leq v\leq t\leq T$

\vspace{0.5em}
\noindent $\mathbf{(DY+)}$ $D_v \xi\geq 0, \; D_{v} \tilde f(t,Y_t)\geq 0,$ $\mathbb P-$a.s.,

\vspace{0.5em}
\noindent 
$\mathbf{(DY-)}$   $D_v \xi\leq 0, \; D_{v} \tilde f(t,Y_t)\leq 0, $ $\mathbb P-$a.s.

 \begin{Remark}
Similarly to Remark \ref{chapBioremarksigne}, Notice that the proof of Theorem \ref{Mbio:THM:density} shows that for any $0\leq v\leq s\leq T$:
\begin{itemize}
\item[] Under Assumption $\mathbf{(DY+)}$
\begin{equation}\label{densityZ:signeDY}
D_v Y_t \geq 0, \; \mathbb P-a.s.
\end{equation}
\item[] Under Assumption $\mathbf{(DY-)}$
\begin{equation}\label{densityZ:signeDY-}
D_v Y_t \leq 0, \; \mathbb P-a.s.
\end{equation}
\end{itemize}

 \end{Remark}
\noindent We have the following theorem which provide conditions on the data $\xi$ and $\tilde f$ ensuring that the law of $Z_t$ has a density with respect to the Lebesgue measure, which can be seen as an extension of \cite[Theorem 4.3]{AbouraBourguin} to the stochastic Lipschitz case
\begin{Theorem}\label{densityZ:linear} Let $\mathbf{(\Theta)}$ be hold and let $(Y,Z)$ be the unique solution of BSDE \eqref{chapBioedsr:linearbsde}. Let $\xi$ be in $\mathbb D^{2,2}$, assume moreover that $\tilde f$ is twice continuously differentiable with respect to $y$. We set the following assumptions for any $0\leq t,t'\leq T$:

\begin{itemize}
\item[$(f+)$]  $D_t \tilde f_y(t,Y_t),\;  (D_t \tilde f)_y(t,Y_t)\geq 0,$
\item[$(f-)$]   $D_t \tilde f_y(t,Y_t),\;  (D_t \tilde f)_y(t,Y_t)\leq 0.$
\end{itemize}

\noindent Assume that there exists $A$ such that $\mathbb P(A)>0$ such that one of the following assumptions is satisfied

\vspace{0.5em}
\noindent $\mathbf{(DZ+)}$
\begin{align}\nonumber &D_{t'}(D_t\xi)\geq 0,\ \mathbb P-a.e. \text{, } D_{t'}(D_t\xi)>0 \text{ on A,}\\
\label{edsr:linear:z:f}& D_{t'}(D_t \tilde f)(t,Y_t)\geq 0
\end{align}
and Assumptions $\mathbf{(DY+)}$ and $(f+)$ hold, or Assumptions $\mathbf{(DY-)}$ and $(f-)$ hold,

\vspace{0.5em}
\noindent $\mathbf{(DZ-)}$ 
\begin{align}\nonumber &D_{t'}(D_t\xi)\leq 0,\ \mathbb P-a.e. \text{, } D_{t'}(D_t\xi)<0 \text{ on A,}\\
\label{edsr:linear:z:fbis}&D_{t'}(D_t \tilde f)(t,Y_t)\leq 0
\end{align}

\noindent and Assumptions $\mathbf{(DY-)}$ and $(f+)$ hold, or Assumptions $\mathbf{(DY+)}$ and $(f-)$ hold.
\vspace{0.5em}

\noindent Then, the law of $Z_t$ is absolutely continuous with respect to Lebesgue's measure for any $t\in(0,T]$.
\end{Theorem}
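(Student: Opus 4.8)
The plan is to apply the Bouleau--Hirsch criterion (Theorem~\ref{BH}) to the random variable $Z_t$, exactly as in the proof of \cite[Theorem 4.3]{AbouraBourguin}, after first establishing that $Z_t\in\mathbb D^{1,2}$ and identifying a version of its Malliavin derivative. The starting point is that under $\mathbf{(EKH)^{p,\beta}}$ or $\mathbf{(BC)}$ we already know from Theorems~\ref{chapBiothm:diff:sl}, \ref{chapBiothm:diff:sl:BC} that $(D_uY_t,D_uZ_t)$ solves the affine BSDE~\eqref{edsr:linear:densityz}; in order to differentiate $Z_t$ a second time, I would first observe that the ``$Z$ part'' of \eqref{edsr:linear:densityz} can be read off via a martingale representation, namely $D_uZ_t$ is the integrand in the representation of the (conditioned) martingale obtained after the Girsanov change of measure $d\mathbb Q/d\mathbb P=\mathcal E(\int_0^\cdot\theta_s\,dW_s)$, which is legitimate since $\theta$ is deterministic (hence $\int_0^\cdot\theta_s\,dW_s\in\mathrm{BMO}(\mathbb P)$ by square-integrability on $[0,T]$, and Theorem~\ref{kazamakithm} applies). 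Concretely, after linearisation one gets, for $0\le u\le t$,
\begin{equation*}
D_uY_t=\mathbb E^{\mathbb Q}_t\!\left[D_u\xi\,e^{\int_t^T f_y(s,Y_s)\,ds}+\int_t^T e^{\int_t^s f_y(\alpha,Y_\alpha)\,d\alpha}D_u\tilde f(s,Y_s)\,ds\right],
\end{equation*}
and the representation of the right-hand side as a stochastic integral under $W$ identifies $D_uZ_t$.

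Next, assuming $\mathbf{(DZ+)}$ together with $\mathbf{(DY+)}$ and $(f+)$ (the other three sign configurations being identical mutatis mutandis), I would apply a \emph{second} Malliavin derivative $D_{t'}$ to the representation above. Because $\xi\in\mathbb D^{2,2}$, $\tilde f$ is twice continuously differentiable in $y$ with the requisite integrability (inherited from Assumption~$\mathbf{(sD^\infty)}$ or its analogue, plus $(f\pm)$), and $Y$ is already Malliavin differentiable with the sign information \eqref{densityZ:signeDY}, one obtains a closed affine BSDE for $(D_{t'}D_uY_t,\,D_{t'}D_uZ_t)$ whose inhomogeneous terms are: $D_{t'}(D_u\xi)$, $D_{t'}(D_u\tilde f)(s,Y_s)$, a term $D_t\tilde f_y(s,Y_s)\,D_{t'}Y_s$ and a term $(D_t\tilde f)_y(s,Y_s)\,D_{t'}Y_s$ coming from the chain rule \eqref{D2:definition:symetrie} applied to $D\tilde f(s,Y_s)$, plus the usual affine coefficients $f_y(s,Y_s)$ in front of $D_{t'}D_uY_s$ and $\theta_s$ in front of $D_{t'}D_uZ_s$. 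Linearising this BSDE under the same measure $\mathbb Q$, every source term is nonnegative: $D_{t'}(D_u\xi)\ge0$ and strictly positive on $A$ by \eqref{edsr:linear:z:f}; $D_{t'}(D_u\tilde f)(s,Y_s)\ge0$ by \eqref{edsr:linear:z:f}; and $D_t\tilde f_y(s,Y_s)D_{t'}Y_s\ge0$, $(D_t\tilde f)_y(s,Y_s)D_{t'}Y_s\ge0$ since $(f+)$ gives the first factors nonnegative and $\mathbf{(DY+)}$ gives $D_{t'}Y_s\ge0$. Hence $D_{t'}D_uY_t\ge0$ for a.e.\ $(t',u)$ and, using the positivity on $A$, $D_{t'}D_uY_t>0$ on a set of positive probability; evaluating at $u=t$ and reading off the $Z$-component then yields $D_{t'}Z_t>0$ on that set, so $\|DZ_t\|_{\mathfrak h}>0$ $\mathbb P$-a.s., and Theorem~\ref{BH} gives the density.

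The main obstacle I expect is the rigorous justification that $Z_t\in\mathbb D^{1,2}$ and, more delicately, the identification of $D_{t'}Z_t$: unlike $Y_t$, the process $Z$ is only defined $dt\otimes d\mathbb P$-a.e., so one cannot simply ``evaluate'' its Malliavin derivative at a fixed time without care. The clean route is to avoid differentiating $Z_t$ directly and instead differentiate the explicit conditional-expectation representation of $D_uY_t$ (which \emph{is} a genuine random variable for each fixed $u$), obtaining $D_{t'}D_uY_t$ and then $D_{t'}D_uZ_t$ as the integrand in its martingale representation — this is exactly the device used in \cite{AbouraBourguin} and it sidesteps the issue. A secondary technical point is checking that all the second-order terms live in $\mathbb D^{1,2}$ with enough integrability to make the linearised affine BSDE well posed in $\mathbb S_{2,\beta,a}\times\mathbb H_{2,\beta,a}$ (resp.\ $\mathcal S_2\times\mathcal H_2$); here one uses that $\theta$ is deterministic and bounded on $[0,T]$ together with the moment assumptions on $r$ (Assumption~$\mathbf{(DsL^{p,\beta})}\,(iv)$) or the BMO/$\mathbb D^{1,p}$ hypotheses of $\mathbf{(BC)}$, exactly as in the proofs of Theorems~\ref{chapBiothm:diff:sl} and \ref{chapBiothm:diff:sl:BC}.
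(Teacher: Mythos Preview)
Your overall strategy is sound and lands in the same place as the paper, but the paper takes a shorter and cleaner path. Rather than setting up and linearising a \emph{second-order} affine BSDE for $(D_{t'}D_uY,\,D_{t'}D_uZ)$ and then evaluating at $u=t$, the paper uses the Clark--Ocone identity $Z_t=D_tY_t$ at the outset: specialising the already-established linearised representation of $D_uY_t$ to $u=t$ gives directly
\[
Z_t=\mathbb E_t^{\mathbb Q}\!\left[D_t\xi\,e^{\int_t^T \tilde f_y(s,Y_s)\,ds}+\int_t^T D_t\tilde f(s,Y_s)\,e^{\int_t^s \tilde f_y(u,Y_u)\,du}\,ds\right],
\]
and one then applies a \emph{single} Malliavin derivative $D_v$ (with $0\le v\le t$) to this explicit conditional expectation. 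The resulting formula (equation~\eqref{d2Z:CO} in the paper) makes all the sign terms visible at once: $D_v(D_t\xi)$, $D_t\xi\cdot\!\int_t^T D_v\tilde f_y(s,Y_s)\,ds$, and $D^2_{v,t}\tilde f(s,Y_s)+D_t\tilde f(s,Y_s)\int_t^s D_v\tilde f_y\,du$ inside the exponential-weighted integral; the hypotheses $\mathbf{(DZ+)}$, $\mathbf{(DY\pm)}$, $(f\pm)$ then give $D_vZ_t>0$ directly.

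Your route would work, but it forces you to re-run the BSDE Malliavin-differentiability machinery (Theorems~\ref{chapBiothm:diff:sl} or \ref{chapBiothm:diff:sl:BC}) on the linear BSDE \eqref{edsr:linear:densityz}, checking all the integrability hypotheses at second order, whereas the paper only needs to differentiate an explicit $\mathcal F_t$-measurable functional. Also, your phrase ``evaluating at $u=t$ and reading off the $Z$-component'' conflates two different objects: what you need is $D_{t'}Z_t=D_{t'}(D_tY_t)$, not the process $D_{t'}D_uZ_t$ coming from the second-order BSDE. The paper's Clark--Ocone step makes this identification automatic and avoids the confusion.
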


\begin{proof}
Let $(Y,Z)$ be the unique solution to BSDE \eqref{edsr:linear:densityz} in $\mathbb D^{1,2}\times L^2([0,T];\mathbb D^{1,2})$, which the Malliavin derivatives are solutions to BSDE \eqref{edsr:linear:densityz}.\vspace{0.3em}

\noindent Let Assumption $\mathbf{(DZ+)}$ be true together with Assumption $\mathbf{(DY+)}$ and $(f+)$. We follow the proof of \cite[Theorem 4.3]{AbouraBourguin} by taking the advantage of the representation of the $Z$ process with Clark-Ocone Formula.   Using now a linearization and according to Clark-Ocone Formula, we obtain
$$Z_t=\mathbb E_t^{\mathbb Q}\left[D_t \xi e^{\int_t^T \tilde f_y(s,Y_s)ds}+\int_t^T D_t \tilde f(s,Y_s) e^{\int_t^s \tilde f_y(u,Y_u)du} ds\right], $$
with $\frac{d\mathbb Q}{d\mathbb P}= \exp\left(\int_0^T \theta_s dW_s-\frac12\int_0^T |\theta_s|^2 ds \right)$. Let $0\leq v\leq t$, we have
\begin{align}
\nonumber D_v Z_t&=\mathbb E_t^{\mathbb Q}\left[D_v(D_t \xi)e^{\int_t^T \tilde f_y(s,Y_s)ds}+ D_t \xi e^{\int_t^T \tilde f_y(s,Y_s)ds} \int_t^T D_v  \tilde f_y(s,Y_s)ds \right.\\
\label{d2Z:CO}&\left. +\int_t^T e^{\int_t^s \tilde f_y(u,Y_u)du} \left(D^2_{v,t} \tilde f(s,Y_s) + D_t \tilde f(s,Y_s)\int_t^s D_v \tilde f_y(u,Y_u)du\right) ds\right]. \end{align}

\noindent Hence, using the definition \eqref{D2:definition:symetrie} of $D^2_{v,t} \tilde f$, Inequality \eqref{densityZ:signeDY}, Assumption $(f+)$ and Assumption \eqref{edsr:linear:z:f}, we deduce that for any $0\leq v<t\leq T$, $D_v Z_t >0$. Thus, the law of $Z_t$ has a density for any $t\in(0,T]$ as a consequence of Theorem \ref{BH}.\vspace{0.3em}

\noindent The proof under Assumptions $\mathbf{(DZ+)}$, $\mathbf{(DY-)}$ and $(f-)$ is similar, by using \eqref{d2Z:CO}, Inequality \eqref{densityZ:signeDY-}, Assumption $(ii)$ and Assumption \eqref{edsr:linear:z:f}.\vspace{0.3em}

\noindent Concerning Assumption $\mathbf{(DZ-)}$ we follow exactly the same proof and for any $0\leq v\leq t\leq T$, we show that $D_v Z_t <0,\; \mathbb P-a.s.$.\end{proof}

\begin{Remark}
Theorem \ref{densityZ:linear} extends the results in \cite{AbouraBourguin}. In the present paper $\theta$ is assumed to be a deterministic map behind the $z$ part of the generator, unlike the model studied in \cite{AbouraBourguin} in which the coefficient behind $z$ is constant. Moreover, in our model $\tilde f$ is stochastic Lipschitz with respect to its $y$ variable, whereas it is assumed to be Lipschitz in \cite{AbouraBourguin}. Finally, we deal with the non-Markovian case for both the terminal condition and the generator of the BSDE, whereas \cite{AbouraBourguin} considers the case where only the terminal condition is non-Markovian. 
\end{Remark}
\subsection{Some remarks on the general stochastic Lipschitz case}\label{someremark:z}
Existence of density for the $Z$ component has been studied for quadratic growth BSDEs in \cite{MastroliaPossamaiReveillac_Density} in the Markovian case. We can in fact adapt this proof to the Markovian stochastic Lipschitz case and one could show that conditions ensuring that the law of $Z_t$ component has a density are similar to those obtained for Markovian quadratic growth BSDE (see \cite[Section 4.3]{MastroliaPossamaiReveillac_Density}). Although in the latter paper, the authors obtain conditions which ensure that $Z_t$ admits a density, we can not reproduce the proof here since it is essentially based on Ma-Zhang Representation (see \cite[Lemma 2.4]{MaZhang_PathRegularity}) which holds in the Markovian case. More precisely, we consider the following forward-backward SDE
\begin{equation}\label{chapBioMbio:fbsde}
\begin{cases}
\displaystyle X_t= X_0+\int_0^t b(s,X_s) ds +\int_0^t \sigma(s,X_s)dW_s,\ t\in[0,T],\ \P-a.s.\\
\displaystyle Y_t = g(X_T) +\int_t^T f (s,X_s,Y_s,Z_s) ds -\int_t^T Z_s dW_s, \ t\in [0,T],\ \P-a.s.
\end{cases} 
\end{equation}  
Then, under some conditions on the data of such forward-backward system, denoting by $(X,Y,Z)$ the solution of \eqref{chapBioMbio:fbsde}, there exists a version of $(D_u X_t, D_u Y_t, D_u Z_t)$ for all $0<u\leq t\leq T$ which satisfies:
\begin{align*}
D_u X_t&=\nabla X_t (\nabla X_u)^{-1}\sigma(u,X_u),\\
 D_u Y_t&=\nabla Y_t (\nabla X_u)^{-1}\sigma(u,X_u),\\
 D_u Z_t&=\nabla Z_t (\nabla X_u)^{-1}\sigma(u,X_u),
\end{align*}
where $(\nabla X,\nabla Y,\nabla Z)$ is the solution to the following FBSDE:
\begin{equation}\label{chapBioedsr_gradient}
\begin{cases}
\displaystyle \nabla X_t= \int_0^t b_x(s,X_s) \nabla X_s ds +\int_0^t \sigma_x(s,X_s)\nabla X_sdW_s,\\
\displaystyle \nabla Y_t = g'(X_T)\nabla X_T +\int_t^T\left( f_x(s,X_s,Y_s,Z_s)\nabla X_s + f_y(s,X_s,Y_s,Z_s)\nabla Y_s\right.\\
\displaystyle \hspace{11em}\left. + f_z(s,X_s,Y_s,Z_s)\nabla Z _s \right)ds-\int_t^T \nabla Z_s dW_s.
\end{cases} 
\end{equation} 
\noindent As far as we know, the same kind of decomposition is still open for path-dependent BSDEs. However, it seems to be hard to obtain a similar formula in the path-dependent framework. As an example, let $Y_T=\xi=\int_0^T B_s ds$. Hence, $Y_T\in \mathbb D^{1,2}$ and $D_rY_T= T-r$. In order to separate the Malliavin integration variable $r$ and the time variable $T$ as in \cite[Lemma 2.4]{MaZhang_PathRegularity} for Markovian BSDEs, we could similarly compute the gradient in space of $\xi$ using a Fr\'echet derivative, denoted by $\nabla_F \xi$. Let $x$ be in $\mathcal C([0,T];\mathbb R)$, that is the space of $\mathbb R$-valued continuous functions of $[0,T]$, and set for any $0\leq t,s \leq T$
$$ B_s^{t,x}:= x(s) \mathbf{1}_{s\leq t}+(x_t+B_s-B_t) \mathbf{1}_{s\geq t},$$ where $x(s)$ denotes the path of $x$ up to time $s$. Then, $\nabla_F B_s=1$ for any $s\in [0,T]$. We thus obtain $\nabla_F \xi=\int_0^T \nabla_F B_s ds=T$. The relation between $\nabla_F \xi$ and $D_r \xi$ is not clear and we can not hope to obtain a decomposition as \cite[Lemma 2.4]{MaZhang_PathRegularity} for path-dependent BSDEs using the same method. 
\vspace{0.5em}

\noindent An other approach to study the $Z$ component could consist in studying the path-dependent PDE associated with the path-dependent BSDE, see \textit{e.g.} \cite{PengWang, EkrenKellerTouziZhang, ETZ1, RenTouziZhang1}. Indeed, it is proved, in the latter papers, that the $Z$ component of the solution to a path-dependent BSDE can be expressed through the Dupire derivative of the solution to a path-dependent PDE. It will be then interesting to take advantage of this relation together with the lifting theorem \cite[Theorem 6.1]{ContFournie} to study the $Z$ component. 

\vspace{0.5em}

\noindent Notice nevertheless that in the biological example proposed in Section \ref{chapBio:section:gene}, only the existence of a density for the law of the $Y$ component is relevant to validate the proposed model. In the examples in Finance proposed in Section \ref{Mbio:section:finance}, the model of pricing studied will be reduced to solve BSDE \eqref{chapBioedsr:linearbsde}, hence we will prove that both the law of $Y_t$ and the law of $Z_t$ have densities with respect to the Lebesgue measure.

\section{Application to the gene expression modelling}\label{chapBio:section:gene}

\subsection{Stochastic model of gene expression}
\noindent Stochastic models predicting mRNA and
proteins fluctuations were introduced during the 70's (see \textit{e.g.} \cite{rigney_schieve}). It has become
during this last decades a prolific field in the studying of proteins synthesis known as the "gene expression noise". This section being a mathematical study of a biological problem, we consider one active gene which synthesises one protein and we give a very simplified explanation of the proteins degradation proceed, by focusing on the main step of the mechanism. For more details, see for instance \cite{paulsson}.
\vspace{0.3em}

\noindent \textbf{Step 1: Transcription.} The first step of the synthesis of the protein consists in the transcription of a gene, made of a piece of DNA, into mRNA. The synthesis of mRNA is catalysed by an enzyme, the RNA polymerase whose the activation rate is denoted by $R$.
\vspace{0.3em}

\noindent \textbf{Step 2: Translation.} In this step, the mRNA, previously synthesised, is decoded by a ribosome. A transfer RNA brings amino acids to the ribosome to produce an amino acid chain using the genetic code. The degradation rate of mRNA is denoted by $\rho$. At the end of this step, the protein is synthesised.
\vspace{0.3em}

\noindent Here, we assume that the present protein concentration is known and we want to study the previous protein concentrations which lead to the one observed. As an illustration of this phenomenon, we consider for instance a necrotic cells model, in which we want to control the initial protein concentration. It was showed in \cite{shamarova_etal.} that this
problem can be reduced to solve the following BSDE
\begin{equation}\label{bsdegene}
Y_t=\xi+\int_t^T( f(Y_s)-\rho Y_s) ds -\int_t^T Z_s dW_s,
\end{equation}
where $Y_t$ is the protein concentration at time $t$, $\xi$ is the terminal protein concentration, which is typically the observed data in a necrotic model, and $f$ is the degradation/syntetization rate of the protein depending on $R$,$\rho$ and a positive constant $a$. In this study, following \cite{shamarova_etal.} we assume that $f$ is the Hill function of the protein with coefficient $2$, \textit{i.e.}

$$ f(Y_s):= R\frac{aY_s^2}{1+aY_s^2}.$$
In biochemistry, $f$ quantifies the fraction of the ligand-binding sites on the receptor protein. The Hill coefficient is $2$, and describes cooperativeness effects. In order to validate their model, the authors of \cite{shamarova_etal.} need to compare the law of the protein concentration at time $t$ obtained by solving BSDE \eqref{bsdegene} with the data produced by Gillespie Method (see \cite{Gillespie77}). However, in \cite{shamarova_etal.}, the authors assumed implicitly that $Y_t$ admits a density. 
\vspace{0.3em}

\noindent We propose in this section to apply the results of Section \ref{chapBioMbio:section:dnesity:lip} to study mathematically the model proposed in \cite{shamarova_etal.} when $\xi:= c+W_T$, with the Malliavin calculus. It can be seen as a mathematical strengthening of the model developed in \cite{shamarova_etal.} by using Nourdin and Viens' Formula to obtain estimates of the density.
\begin{Proposition}\label{Mbio:propsigne}
Let $(Y,Z)$ be the unique solution of BSDE \eqref{bsdegene}. Assume that $\xi\geq 0, \; \mathbb P-a.s.$  (resp. $\xi\leq 0, \; \mathbb P-a.s.$), then, for any $t\in [0,T]$, $Y_t\geq 0, \; \mathbb P-a.s.$ (resp. $Y_t\leq 0, \; \mathbb P-a.s.$).
\end{Proposition}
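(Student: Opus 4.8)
The plan is to exploit the fact that the generator $y\longmapsto f(y)-\rho y$ of BSDE \eqref{bsdegene} vanishes at $0$ and is globally Lipschitz, so that, after a path-wise linearisation, BSDE \eqref{bsdegene} becomes a \emph{linear} BSDE with bounded coefficients, for which an explicit representation of the first component is available.

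First I would record the elementary properties of $f$. Since $f(y)=Ray^2/(1+ay^2)$ we have $f(0)=0$ and $f'(y)=2Ray/(1+ay^2)^2$, which is bounded on $\mathbb R$ by some constant $m>0$; hence $f$ is $m$-Lipschitz and, together with $\xi:=c+W_T\in L^2(\mathbb R)$, Assumption $\mathbf{(L)}$ holds, so that by Theorem \ref{chapBiolipschitz:existence} the solution $(Y,Z)$ of \eqref{bsdegene} exists and is unique in $\mathcal S_2\times\mathcal H_2$. Next, linearise the generator along the trajectory of $Y$ by setting
\[
\gamma_s:=\left(\frac{f(Y_s)-f(0)}{Y_s}-\rho\right)\mathbf{1}_{\{Y_s\neq 0\}}+\big(f'(0)-\rho\big)\mathbf{1}_{\{Y_s=0\}},\qquad s\in[0,T].
\]
By the mean value theorem and the Lipschitz property of $f$, $\gamma$ is an $\mathbb F$-adapted process with $\|\gamma\|_\infty\leq m+\rho$. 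Because $f(0)=0$, BSDE \eqref{bsdegene} rewrites as the linear BSDE
\[
Y_t=\xi+\int_t^T \gamma_s Y_s\,ds-\int_t^T Z_s\,dW_s,\qquad t\in[0,T].
\]

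The final step is the classical representation of the solution of a linear BSDE with bounded coefficients (see \cite{ELKPengQuenez}). Introducing the integrating factor $\Gamma_t:=\exp\!\big(\int_0^t\gamma_s\,ds\big)$, Itô's formula gives $d(\Gamma_t Y_t)=\Gamma_t Z_t\,dW_t$; since $\Gamma$ is bounded and $Y\in\mathcal S_2$, the process $\Gamma Y$ is a uniformly integrable martingale, so that $\Gamma_t Y_t=\mathbb E_t[\Gamma_T\xi]$, i.e.
\[
Y_t=\mathbb E_t\!\left[\xi\exp\!\Big(\int_t^T\gamma_s\,ds\Big)\right],\qquad t\in[0,T].
\]
Since the exponential factor is $\mathbb P$-a.s.\ strictly positive, $\xi\geq 0$ a.s.\ forces $Y_t\geq 0$ a.s.\ for every $t$, and symmetrically $\xi\leq 0$ a.s.\ forces $Y_t\leq 0$ a.s., which is the claim.

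I do not anticipate any genuine obstacle here; the only mild point requiring care is the justification that $\Gamma Y$ is a true (uniformly integrable) martingale, which follows from the a priori estimate $Y\in\mathcal S_2$ and the boundedness of $\gamma$. Alternatively, one may avoid the representation altogether and simply invoke the comparison theorem for Lipschitz BSDEs, comparing $(Y,Z)$ with the trivial solution $(0,0)$ corresponding to the terminal value $0$, which is legitimate precisely because $f(0)-\rho\cdot 0=0$.
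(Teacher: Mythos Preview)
Your proof is correct and follows essentially the same approach as the paper: both factor the generator as $\big(Raf(Y_s)/Y_s-\rho\big)Y_s$ (using $f(0)=0$), apply the integrating factor $\exp\!\big(\int_0^t\gamma_s\,ds\big)$, and conclude via the representation $Y_t=\mathbb E_t\!\big[\xi\exp(\int_t^T\gamma_s\,ds)\big]$. Your version is in fact a touch more careful in spelling out why $\gamma$ is bounded and why $\Gamma Y$ is a true martingale, and your remark that the comparison theorem gives an alternative one-line proof is apt.
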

\begin{proof}
We reproduce here the linearisation method for BSDE introduced in \cite{ELKPengQuenez} for BSDE \eqref{bsdegene}.
$$Y_t=\xi+\int_t^T   \left(R\dfrac{aY_s}{1+ aY_s^2} -\rho\right) Y_s ds-\int_t^T Z_s dW_s,$$
hence, by setting $X_t:=Y_t e^{\int_0^t \left(R\frac{aY_s}{1+ aY_s^2} -\rho\right) ds}$, we obtain from Ito's Formula,
\begin{align*}
dX_t&= dY_t e^{\int_0^t \left(R\frac{aY_s}{1+ aY_s^2} -\rho \right)ds} + Y_t e^{\int_0^t \left(R\frac{aY_s}{1+ aY_s^2} -\rho \right)ds}\left(R\dfrac{aY_t}{1+ aY_t^2} -\rho\right)dt\\
&=-Y_te^{\int_0^t \left(R\frac{aY_s}{1+ aY_s^2} -\rho\right) ds}\left(R\dfrac{aY_t}{1+ aY_t^2} -\rho\right)dt+ Z_t e^{\int_0^t \left(R\frac{aY_s}{1+ aY_s^2} -\rho\right) ds}dW_t \\
&+ Y_t e^{\int_0^t \left(R\frac{aY_s}{1+ aY_s^2} -\rho\right) ds}\left(R\dfrac{aY_t}{1+ aY_t^2} -\rho\right)dt\\
\end{align*}
Thus,
$$Y_t=\mathbb E_t\left[\xi e^{\int_t^T \left( R\frac{aY_s}{1+ aY_s^2} -\rho\right) ds}\right], $$
whose sign is fully determined by the sign of $\xi$.

\end{proof}

\subsection{A model which guarantees Gaussian tails.}\label{section:model:gaussiantails}
We extend in this section the model introduced in \cite{shamarova_etal.}. We assume that $R, \rho$ are two real constants  and that $\xi$ satisfied the following assumption
\begin{itemize}
\item $\xi$ is a Gaussian $\mathcal F_T$-measurable random variable whose mean is denoted by $c$ and variance is denoted by $\sigma^2$.
\item $\xi$ is in $\mathbb D^{1,2}$ and there exist $0<\underline{k}\leq \overline{k}$ such that for any $r\in [0,T]$, $0<\underline{k}\leq D_r \xi\leq \overline{k}$.
\end{itemize}
According to Theorem \ref{chapBiolipschitz:existence} and Theorem \ref{Mbio:thm:d12} above, BSDE \eqref{bsdegene} admits a unique solution $(Y,Z)$ such that for any $t\in [0,T]$, $Y_t\in \mathbb D^{1,2}$ and $Z\in L^2([t,T];\mathbb D^{1,2})$. We then have the following proposition.
\begin{Proposition}\label{estimesra}
The first component $Y$ of the solution of BSDE \eqref{bsdegene} admits a density denoted by $\rho_{Y_t}$ at any time $t\in (0,T]$. Besides, $\rho_{Y_t}$ has Gaussian estimates, satisfying the following inequalities for any $x\in \mathbb R$ 

\begin{equation}\label{Mbio:densityestimates}f_i(x)\leq \rho_{Y_t}(x) \leq f_s(x),\end{equation}
where
$$f_i(x)=\frac{C_{Y_t}}{\overline k^2 t}e^{-2\overline{C_{a,R,\rho}}(T-t)} e^{-e^{-2\underline{C_{a,R,\rho}}(T-t)}\frac{(x-\mathbb E[Y_t])^2}{2\underline k^2 t}}, $$
$$f_s(x)= \frac{C_{Y_t}}{\underline k^2 t}e^{-2\underline{C_{a,R,\rho}}(T-t)} e^{-e^{-2\overline{C_{a,R,\rho}}(T-t)}\frac{(x-\mathbb E[Y_t])^2}{2\overline k^2t}},$$
and with 
$$C_{Y_t}:=  \frac{\mathbb E[|Y_t-\mathbb E[Y_t]|]}{2},$$
$$\overline{C_{a,R,\rho}}:= \frac{9}{8}R\sqrt{\frac a3}-\rho,$$
$$\underline{C_{a,R,\rho}}:=- \frac{9}{8}R\sqrt{\frac a3}-\rho.$$
\end{Proposition}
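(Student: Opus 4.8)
The strategy is to place BSDE \eqref{bsdegene} in the Lipschitz framework of Theorems \ref{chapBiolipschitz:existence} and \ref{Mbio:thm:d12}, to derive an explicit \emph{deterministic} two-sided bound for the Malliavin derivative $D_uY_t$, and then to turn that bound into the Gaussian estimates \eqref{Mbio:densityestimates} via Nourdin and Viens' formula (Theorem \ref{thm_NourdinViens}).

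First I would verify the hypotheses of Theorems \ref{chapBiolipschitz:existence} and \ref{Mbio:thm:d12}. The generator $g(y):=f(y)-\rho y=R\frac{ay^2}{1+ay^2}-\rho y$ depends neither on $\omega$ nor on $z$, is $C^1$ in $y$, and $g'(y)=f'(y)-\rho$ with $f'(y)=R\frac{2ay}{(1+ay^2)^2}$; a one-line optimisation (the critical point of $|f'|$ is at $ay^2=1/3$) gives $\sup_{y\in\mathbb R}|f'(y)|=\frac98 R\sqrt{a/3}$, hence
\begin{equation*}
\underline{C_{a,R,\rho}}\le f'(y)-\rho\le\overline{C_{a,R,\rho}},\qquad y\in\mathbb R ,
\end{equation*}
and in particular $g$ is globally Lipschitz, so $\mathbf{(L)}$ holds ($\xi$ being Gaussian, hence square integrable). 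Assumption $\mathbf{(\textbf{\textsc{lD}})}$ is also satisfied: $f$ being deterministic, $Df\equiv0$ and $f_z\equiv0$, so the difference-quotient condition is trivially $0$, while the convergences \eqref{assumpcts1:Lipsch} reduce to $\|f'(Y^n_\cdot)-f'(Y_\cdot)\|_{\mathfrak h}\to0$ in probability, which follows from the continuity and boundedness of $f'$, from $Y^n\to Y$ in $\mathcal S_2$, and from dominated convergence. Thus $Y_t\in\mathbb D^{1,2}$ and $Z\in L^2([t,T];\mathbb D^{1,2})$ for every $t$, and \eqref{edsrderiv:Mbio} reduces to the linear BSDE
\begin{equation*}
D_uY_t=D_u\xi+\int_t^T\big(f'(Y_s)-\rho\big)D_uY_s\,ds-\int_t^T D_uZ_s\,dW_s,\qquad 0\le u\le t ,
\end{equation*}
which carries no $z$ in its driver. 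Hence, with no change of measure needed, $D_uY_t=\mathbb E_t\big[D_u\xi\exp(\int_t^T(f'(Y_s)-\rho)\,ds)\big]$ for $0\le u\le t$, and $D_uY_t=0$ for $u>t$. Both factors inside the conditional expectation are positive, and from $0<\underline k\le D_u\xi\le\overline k$ together with the bound above one gets $\underline k\,e^{\underline{C_{a,R,\rho}}(T-t)}\le D_uY_t\le\overline k\,e^{\overline{C_{a,R,\rho}}(T-t)}$, $\mathbb P$-a.s.; therefore, writing $\kappa_-:=t\,\underline k^2e^{2\underline{C_{a,R,\rho}}(T-t)}$ and $\kappa_+:=t\,\overline k^2e^{2\overline{C_{a,R,\rho}}(T-t)}$,
\begin{equation*}
0<\kappa_-\le\|DY_t\|_{\mathfrak h}^2=\int_0^t|D_uY_t|^2\,du\le\kappa_+,\qquad t\in(0,T].
\end{equation*}
By Bouleau and Hirsch's Criterion (Theorem \ref{BH}), $Y_t$ admits a density $\rho_{Y_t}$ for every $t\in(0,T]$.

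For the Gaussian bounds I would apply Nourdin and Viens' formula with $F=Y_t$ and $\Phi_{Y_t}(W)=(D_\cdot Y_t)$, which is supported on $[0,t]$. The deterministic envelope of $D_uY_t$ is inherited, with the same constants, by the Ornstein-Uhlenbeck copy $\widetilde{\Phi_{Y_t}^u}(W)=\Phi_{Y_t}(e^{-u}W+\sqrt{1-e^{-2u}}W^*)$, which has the same law; hence, pathwise, $\kappa_-\le\langle\Phi_{Y_t}(W),\widetilde{\Phi_{Y_t}^u}(W)\rangle_{\mathfrak h}\le\kappa_+$. Taking $\mathbb E^*$, then the conditional expectation, then $\int_0^\infty e^{-u}\,du=1$ in \eqref{def:gF}, we get $\kappa_-\le g_{Y_t}(x)\le\kappa_+$ for every $x\in\mathbb R$. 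Plugging this into the density formula of Theorem \ref{thm_NourdinViens}, and using that $u\mapsto u/g_{Y_t}(u)$ has the sign of $u$ to bound $\int_0^{x-\mathbb E[Y_t]}u\,du/g_{Y_t}(u)$ between $(x-\mathbb E[Y_t])^2/(2\kappa_+)$ and $(x-\mathbb E[Y_t])^2/(2\kappa_-)$, one obtains, as in the standard use of this formula (cf. the SPDE references recalled in the introduction), that $\text{supp}(\rho_{Y_t})=\mathbb R$ and, for all $x\in\mathbb R$,
\begin{equation*}
\frac{C_{Y_t}}{\kappa_+}\,e^{-\frac{(x-\mathbb E[Y_t])^2}{2\kappa_-}}\le\rho_{Y_t}(x)\le\frac{C_{Y_t}}{\kappa_-}\,e^{-\frac{(x-\mathbb E[Y_t])^2}{2\kappa_+}},
\end{equation*}
where $2C_{Y_t}=\mathbb E[|Y_t-\mathbb E[Y_t]|]$. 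Substituting the expressions for $\kappa_\pm$ turns these inequalities into exactly $f_i(x)\le\rho_{Y_t}(x)\le f_s(x)$, i.e. \eqref{Mbio:densityestimates}.

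The only genuinely delicate point is this last step: transferring the deterministic envelope of $(D_uY_t)$ into an envelope $\kappa_-\le g_{Y_t}\le\kappa_+$ for the Nourdin-Viens kernel, and then propagating it with the correct constants through the density formula. Everything else — checking $\mathbf{(L)}$ and $\mathbf{(\textbf{\textsc{lD}})}$, computing $\sup_y|f'(y)|$, and solving the linear BSDE for $DY$ — is routine.
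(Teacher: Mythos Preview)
Your proof is correct and follows essentially the same approach as the paper: both linearise the BSDE for $D_uY_t$ to get $D_uY_t=\mathbb E_t\big[D_u\xi\exp(\int_t^T(f'(Y_s)-\rho)\,ds)\big]$, use the elementary bound $\sup_y|f'(y)|=\frac98 R\sqrt{a/3}$ to obtain the two-sided deterministic estimate $\underline k\,e^{\underline{C_{a,R,\rho}}(T-t)}\le D_uY_t\le\overline k\,e^{\overline{C_{a,R,\rho}}(T-t)}$, and then feed this into Nourdin--Viens' formula. Your write-up is in fact more detailed than the paper's on two points the paper leaves implicit: the verification of Assumption $(\textbf{\textsc{lD}})$ for this specific generator, and the passage from the envelope on $D_uY_t$ to the envelope on $g_{Y_t}$ via the Ornstein--Uhlenbeck copy.
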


\begin{proof} Let $(Y,Z)$ be the unique solution of \eqref{bsdegene}. We deduce from Theorem \ref{chapBiothm:densityb} that for any $t\in (0,T]$, the law of $Y_t$ admits a density denoted by $\rho_{Y_t}$. Recall that $(DY, DZ)$ satisfies the following linear BSDE
$$D_u Y_t=D_r \xi+\int_t^T 2R \dfrac{aY_sD_uY_s}{(1+aY_s^2)^2}-\rho D_u Y_s \ ds-\int_t^T D_uZ_s dW_s, \; 0\leq u\leq t\leq T, \; \mathbb P-a.s. $$
By linearisation, we thus obtain
$$D_u Y_t=\mathbb E_t\left[ D_u \xi e^{\int_t^T \left(2R\frac{aY_s}{(1+aY_s^2)^2}-\rho\right) ds}\right].$$
Notice that $\overline{C_{a,R,\rho}}:=\frac{9}{8}R\sqrt{\frac a3}-\rho$ is the maximum of $y\longmapsto 2R\frac{ay}{(1+ay^2)^2}-\rho$ and $\underline{C_{a,R,\rho}}:=-\frac{9}{8}R\sqrt{\frac a3}-\rho$ is its minimum. Hence, for any $0\leq u\leq t\leq T$, 

$$\underline k e^{\underline{C_{a,R,\rho}}(T-t)} \leq D_uY_t \leq \overline k e^{\overline{C_{a,R,\rho}}(T-t)}.$$
Using the definition \eqref{def:gF} of $g_{Y_t}$, one get for any $t\in (0,T]$
$$|\underline k|^2te^{2\underline{C_{a,R,\rho}}(T-t)} \leq g_{Y_t}(x) \leq |\overline k|^2te^{2\overline{C_{a,R,\rho}}(T-t)}, \; x\in \mathbb R. $$
Thus, according to Theorem \ref{thm_NourdinViens}, Relation \eqref{Mbio:densityestimates} holds.
\end{proof}

\subsection{Example 1: Shamarova-Ramos-Aguiar's Model}
To validate the method proposed in \cite{shamarova_etal.}, we have to analyse how close the law of $Y_t$ for any $t\in (0,T]$ is to Gaussian distributions produced by the Gillepsie method (see \cite[Section III]{shamarova_etal.}). Notice that in \cite{shamarova_etal.} the law of $Y_t$ is emphasised through a distribution fitting and is not proved rigorously. We propose in this section a more accurate proof in order to validate the Shamarova-Ramos-Aguiar model.

\subsubsection{Law of $Y_t$ by using statistical tests}
 Let $(Y,Z)$ be the unique solution to BSDE \eqref{bsdegene} where $\xi$ has a normal distribution. In \cite{shamarova_etal.}, the authors study their model by assuming that $Y_t$ has a normal distribution and compare the first and second order moments of $Y_t$ with those generated by a benchmark random variable, which has a normal distribution. However, it is not clear that the law of $Y_t$ is normal. Nevertheless, from a statistical point of view, we could validate this assumption by using a statistical hypothesis test. In this subsection, we set the statistical hypothesis
 
 \begin{center}(H) "$Y_t$ has a normal distribution"\end{center} and we first test it using a Jarque-Bera test with the data of \cite[A. Self-regulating gene]{shamarova_etal.}.\vspace{0.3em}
 
 \noindent Recall that the Jarque-Bera test consists in computing the sample skewness, denoting by $S$, and the sample kurtosis, denoting by $K$, of a sample data, such that
 \begin{align*}
 S:= \frac{\frac1M \sum_{i=1}^M (Y_t^i- \overline{Y}_t)^3}{\left(\frac1M \sum_{i=1}^M |Y_t^i- \overline{Y}_t|^2 \right)^{\frac32}},\;  K:=  \frac{\frac1M \sum_{i=1}^M (Y_t^i- \overline{Y}_t)^4}{\left(\frac1M \sum_{i=1}^M |Y_t^i- \overline{Y}_t|^2 \right)^{2}},
 \end{align*}
 where $M$ denotes the size of the sample, $Y_t^i$ is the $ith$ observated data and $ \overline{Y}_t$ is the arithmetical mean of the data. We then define the Jarque-Bera variable denoted by $JB$, by the following formula
 $$JB:= M\left( \frac{S^2}{6}+\frac{1}{24} (K-3)^2\right). $$
Under $(H)$, the law of $JB$ is a chi-squared distribution with two degrees of freedom. Hence, by choosing a risk level $\alpha=5\%$, the critical region is $JB>5.99$, that is to say if $JB>5.99$ we reject $(H)$. We refer to \cite{JB} for more details on this method.\vspace{0.3em}

\noindent We apply this test to $Y_t$, with the data of \cite[A. Self-regulating gene]{shamarova_etal.}: $M=5 000, \; R=1,\; \rho=0.001, \; T=400$. The results are given in Table \ref{table_JB}. 
 
 \begin{table}[H]\caption{ \label{table_JB} A Jarque-Bera test for Hypothesis $(H)$ with the data of  \cite[A. Self-regulating gene]{shamarova_etal.}.} 
 \begin{center}
\begin{tabular}{|c|c|c|c|c|}\hline
Time $t$&$JB$&$(H)$\\
\hline $400$&2.62& Not rejected\\
\hline $300$&7.92& Rejected\\
\hline $200$&5.52& Rejected\\
\hline $100$&19.4& Rejected\\
\hline $50$ &11.45& Rejected\\
\hline
\end{tabular}
\end{center}
 \end{table}
 \paragraph{Interpretation} A Jarque-Bera test does not accept the assumption $(H)$ with a risk level $\alpha= 0.05$. Hence, from a statistical point of view, it is not clear that $Y_t$ has a gaussian law. The problem comes from the number of simulations which has to be high.\vspace{0.5em}

\noindent We now choose a number of simulation more relevant, by taking $M=100000$. We use a Jarque-Bera test together with a Kolmogorov-Smirnov test in order to validate satistically the model developed in \cite{shamarova_etal.}. Recall that if we have a sample $(Y_t^i)_{1\leq i\leq M}$ of observed data, we set $KS$ the Kolmogorov-Smirnov statistic corresponding to the sample, defined by
$$KS:= \sqrt{M} \sup_{x}\left\{F_M(x)-F(x) \right\}, $$
where $F_M$ is the empirical distribution function of the sample of observed data and $F$ is the cumulative distribution function of a normal law with parameters the arithmetic mean and the variance of the sample. Hence, for a level $\alpha=0.05$, by using a Kolmogorov-Smirnov test, we reject the Hypothesis $(H)$ as soon as $KS>1.36$. The results are presented in Table \ref{JB:100000}.
 
  \begin{table}[H]\caption{ \label{JB:100000} A Jarque-Bera test and a Kolmogorov-Smirnov test for Hypothesis $(H)$ with the data of  \cite[A. Self-regulating gene]{shamarova_etal.} and $M=100000$. We write "Not R." for "not rejected".} 
 \begin{center}
\begin{tabular}{|c|c|c|c|c|c|}\hline  \diagbox{Statistical tests}{Time t}&400&300&200&100&50\\
\hline Jarque-Bera test &&&&&\\
$JB$ &1.91&2.61&2.08&2.31&1.72\\
$(H)$& Not R. &Not R.& Not R.&Not R.&Not R. \\
\hline Kolmogorov-Smirnoff test&&&&&\\
$KS$ &0.501&0.500&0.501&0.501&0.501 \\
$(H)$&Not R.&Not R.&Not R. &Not R.& Not R.\\
\hline
\end{tabular}
\end{center}
 \end{table}
  \paragraph{Interpretation} A Jarque-Bera test together with a Kolmogorov-Smirnov test cannot invalidate the assumption $(H)$ with a risk level $\alpha= 0.05$. Hence, from a statistical point of view, the model developed in \cite{shamarova_etal.} seems to be relevant. However, we propose in the next section a pure mathematical analyse of this model, by using the Malliavin calculus and by applying results of \cite{MastroliaPossamaiReveillac_Density} together with those obtained in Section \ref{Mbio:densityY}.

 \subsubsection{Validation of the model by using the Malliavin calculus and Nourdin-Viens Formula}
\noindent Assume that $\xi=c+\sigma^2 W_T$. Then, we can use the result of Section \ref{section:model:gaussiantails} and we deduce that BSDE \eqref{bsdegene} admits a unique solution $(Y,Z)$ such that for any $t\in [0,T]$, $Y_t\in \mathbb D^{1,2}$ and $Z\in L^2([t,T];\mathbb D^{1,2})$. Besides, according to Proposition \ref{estimesra}, for any $t\in [0,T]$, $Y_t$ admits a density with respect to the Lebesgue measure denoted by $\rho_{Y_t}$. such that $\rho_{Y_t}$ has Gaussian estimates, satisfying the following inequalities for any $x\in \mathbb R$ 
\begin{equation*}f_i(x)\leq \rho_{Y_t}(x) \leq f_s(x),\end{equation*}
where
$$f_i(x)=\frac{C_{Y_t}}{ \sigma^4t}e^{-2\overline{C_{a,R,\rho}}(T-t)} e^{-e^{-2\underline{C_{a,R,\rho}}(T-t)}\frac{(x-\mathbb E[Y_t])^2}{2t\sigma^4}},$$
$$ f_s(x)= \frac{C_{Y_t}}{ \sigma^4t}e^{-2\underline{C_{a,R,\rho}}(T-t)} e^{-e^{-2\overline{C_{a,R,\rho}}(T-t)}\frac{(x-\mathbb E[Y_t])^2}{2t\sigma^4}}.$$
We illustrate these results in Figure \ref{figuredensity}.

\begin{figure}[H]\caption{ \label{figuredensity} $T,a,c,\sigma^2=1$, $R=1$,$\rho=0.001$, and 500 000 simulations using a method of Monte-Carlo (see \cite{BouchardTouzi} for instance) to compute the solution of BSDE \eqref{bsdegene}. We represent $\rho_{Y_t}$ for $t=0.9,0.75,0.6,0.5$. We provide in red (resp. in blue) the supremum bound "fs" of $\rho$ (resp. the infimum "fi"), using Nourdin and Viens' Formula.}
 \begin{center}
 \includegraphics[scale=0.25]{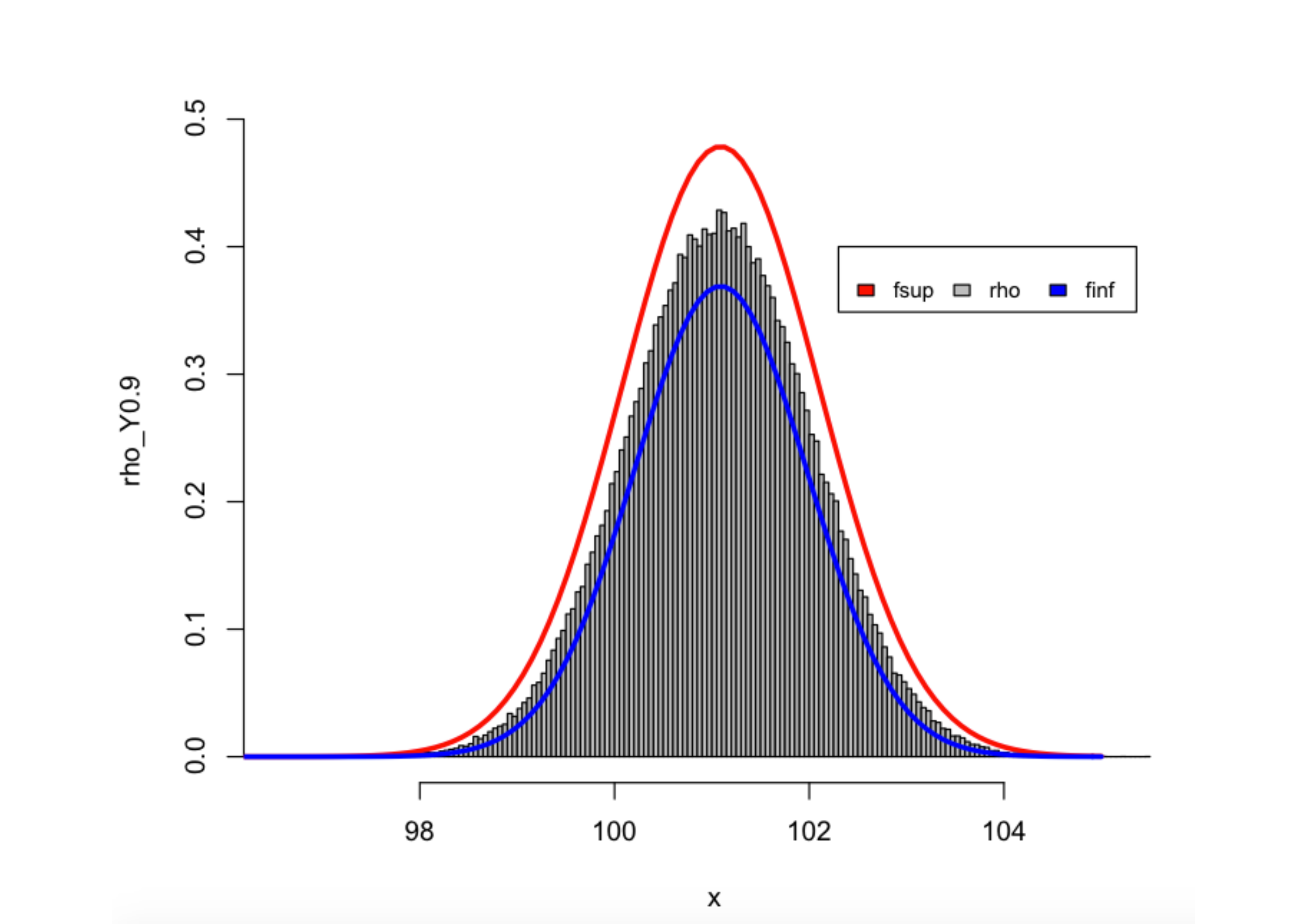}
\includegraphics[scale=0.25]{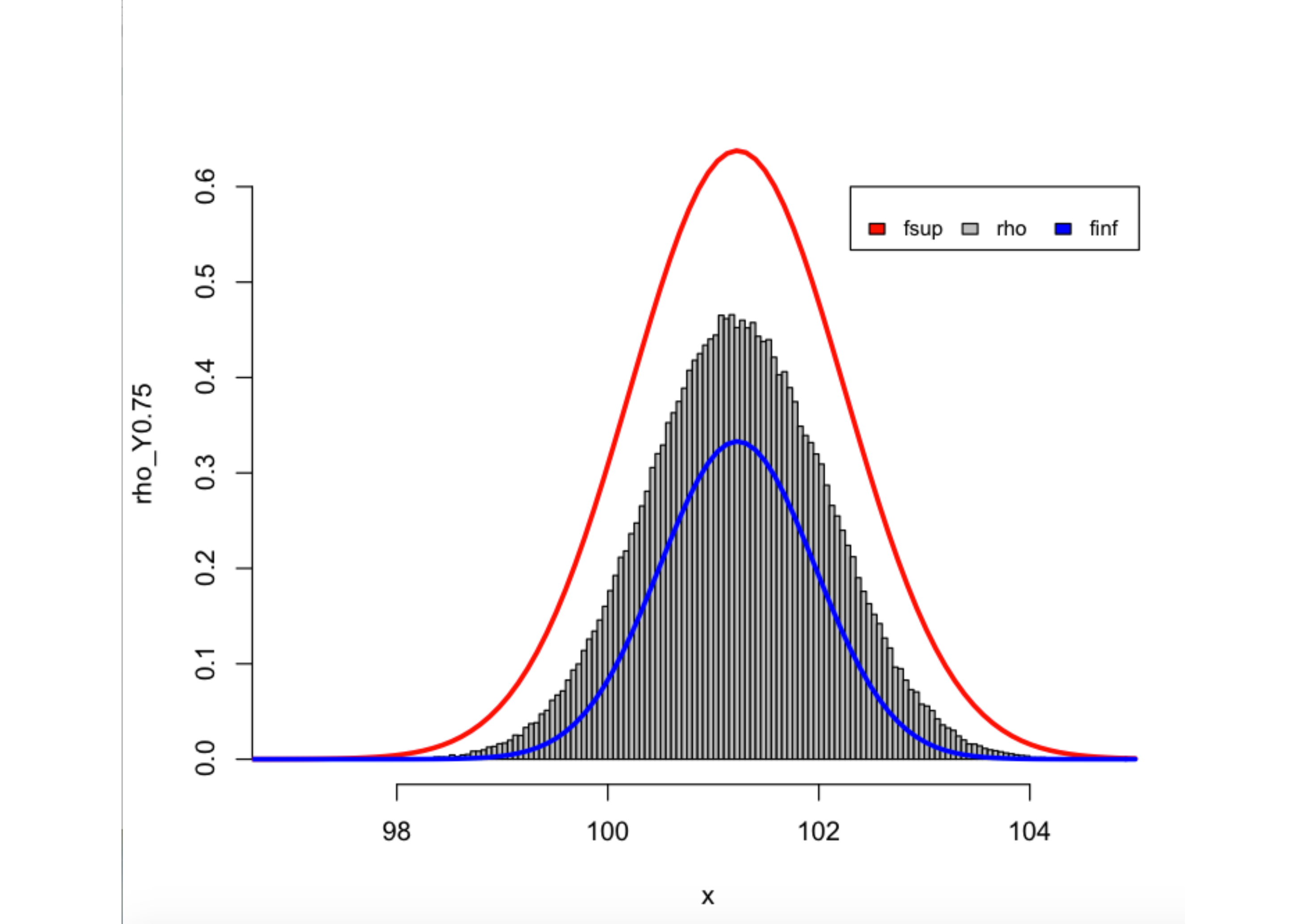}
\includegraphics[scale=0.25]{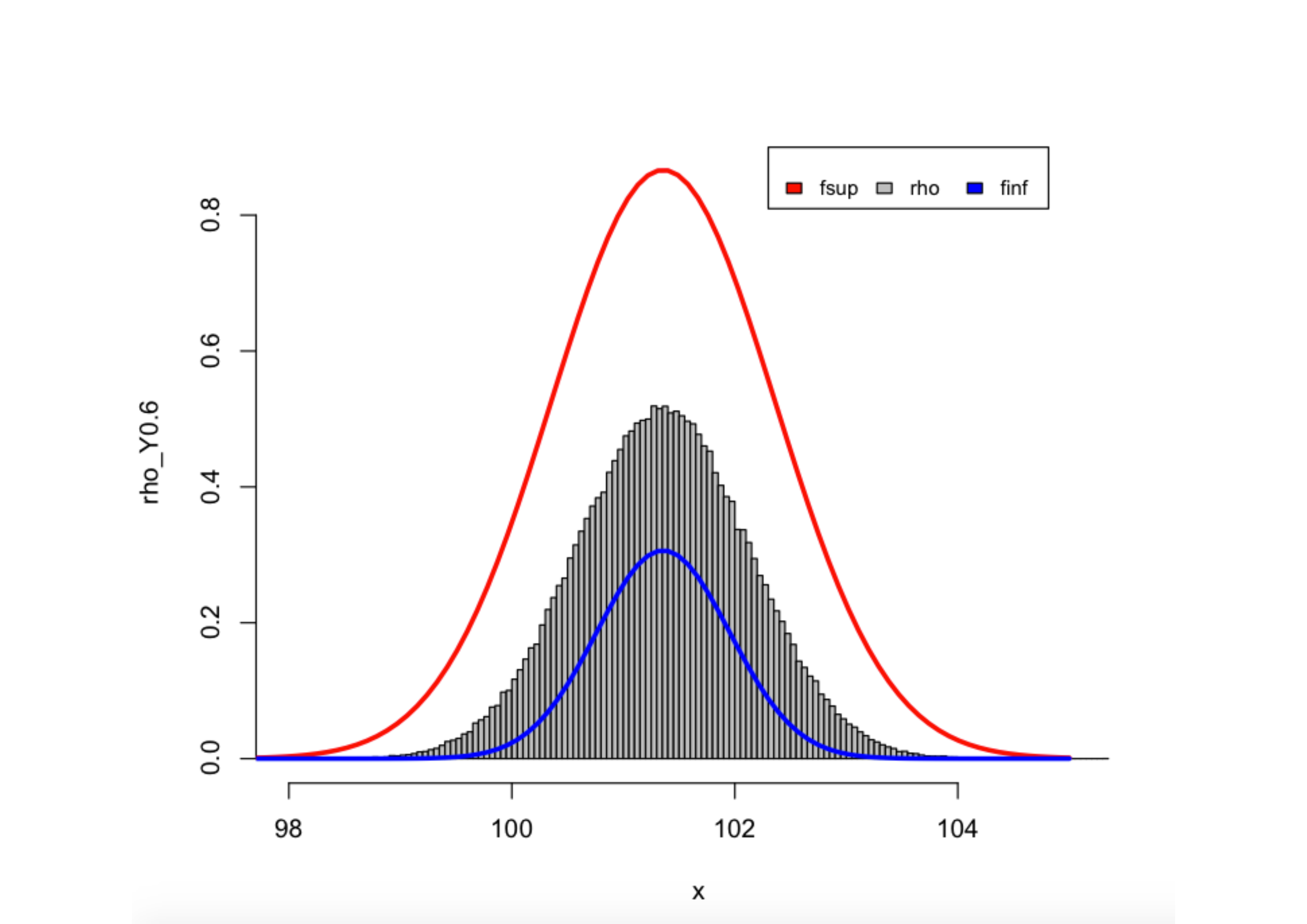}
\includegraphics[scale=0.25]{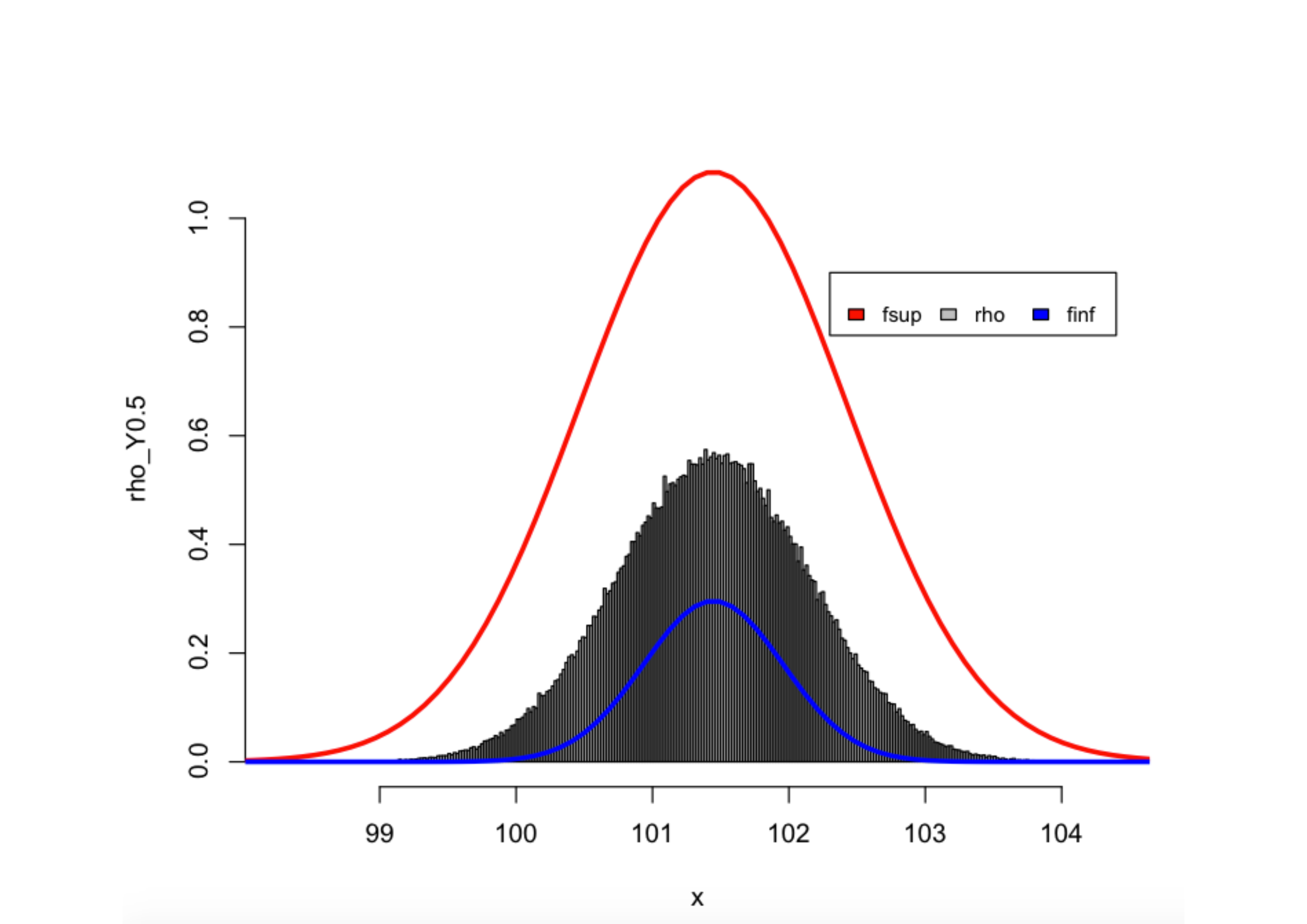}
 \end{center}
\end{figure}

\begin{center}
\begin{tabular}{|c|c|c|c|c|}\hline
Time $t$& 0.9 &0.75& 0.6& 0.5\\
\hline $\mathbb E[Y_t]$& 101.089& 101.228&101.357 &101.446\\
\hline $\text{Var}[Y_t]$&0.89847&0.749607&0.599469&0.503076\\
\hline\end{tabular}
\end{center}
\paragraph{Interpretation} The closer $t$ is to $T$, the better the approximation is using Proposition \ref{estimesra}. Besides, this method guarantees Gaussian tails to control extreme events which is fundamental to validate the model developed in \cite{shamarova_etal.} by comparing the obtained data with those induced by Gillepsie Method (see \cite{shamarova_etal.} and \cite{Gillespie77} for more details).\vspace{0.3em}

\noindent Notice finally that the variance of $Y_t$ seems to be a decreasing function of the time. This is not surprising since $Y_0$ is deterministic.
\subsection{Example 2. An example in the non-Markovian case}
We now propose to extend the model developed by Shamarova, Ramos and Aguiar (see the previous Example 1) to the non-Markovian setting. This extension might be quite relevant when we study the synthesis of protein in some kind of cells (see for instance \cite{BratsunVolfsonHastyTsimring,Leoncini_Thesis,FromionLeonciniRobert}).
Assume that there exist $\alpha\in \mathbb R$, $\beta>0$ and $\gamma\geq 0$ such that $\xi=\alpha+\beta W_T +\gamma\int_0^T W_s ds.$ Hence, BSDE \eqref{bsdegene}becomes:

\begin{equation}\label{bsdesraextension}
Y_t=\alpha+\beta W_T+\gamma\int_0^T W_s ds+\int_t^T \left(R \dfrac{aY_s^2}{1+aY_s^2}-\rho Y_s\right) ds -\int_t^T Z_s dW_s,
\end{equation}
According to Theorem \ref{chapBiolipschitz:existence} and Theorem \ref{Mbio:thm:d12}, BSDE \eqref{bsdesraextension} admits a unique solution $(Y,Z)$ such that for any $t\in [0,T]$, $Y_t\in \mathbb D^{1,2}$ and $Z\in L^2([t,T];\mathbb D^{1,2})$. According to Proposition \ref{estimesra}, for any $t\in [0,T]$, $Y_t$ admits a density with respect to the Lebesgue measure denoted by $\rho_{Y_t}$ such that $\rho_{Y_t}$ has Gaussian estimates, satisfying the following inequalities for any $x\in \mathbb R$ 

\begin{equation}\label{densityestimates}f_i(x)\leq \rho_{Y_t}(x) \leq f_s(x),\end{equation}
where
$$f_i(x)=\frac{C_{Y_t}}{(\beta+\gamma T)^2 t}e^{-2\overline{C_{a,R,\rho}}(T-t)} e^{-e^{-2\underline{C_{a,R,\rho}}(T-t)}\frac{(x-\mathbb E[Y_t])^2}{2\beta^2 t}}, $$
$$f_s(x)= \frac{C_{Y_t}}{\beta^2 t}e^{-2\underline{C_{a,R,\rho}}(T-t)} e^{-e^{-2\overline{C_{a,R,\rho}}(T-t)}\frac{(x-\mathbb E[Y_t])^2}{2(\beta+\gamma T)^2t}}.$$
\section{Applications to the classical pricing problem}\label{Mbio:section:finance}

\subsection{General model and comments}
The problem of pricing in finance using BSDE was
first developed in \cite{ELKPengQuenez}. Consider a financial market in which an agent invests in a riskless asset, denoted by $S^0$, whose the dynamics is given by the short rate of the market, denoted by $r$, and a risky asset, denoted by $S$, whose the dynamic is given through a predictable process, called the risk premium and denoted by $\theta$. Let now $\xi$ be a contingent claim. The classical pricing problem consists in finding an hedging strategy $Z$ and a price $y_0$ such that the terminal wealth of the agent is $\xi$. It was showed in \cite{ELKPengQuenez} that this pricing problem can be reduced to solve the following
stochastic linear BSDE, when $S$ is a geometric Brownian motion
\begin{equation}\label{bsdefinance}
dY_t =(r_tY_t+\theta_tZ_t)dt+Z_tdW_t, \; Y_T=\xi. 
\end{equation}
More generally, we set the following assumption, which enlarge the range of possible applications to this study
\begin{itemize}
\item[$\mathbf{(S)}$]
Let an asset $S$ such that for any $\mathcal F_T$-measurable square integrable random variable $\xi$, the pricing problem can be reduced to study BSDE \eqref{bsdefinance}, where the process $\theta$ depends on the dynamic of $S$.
\end{itemize}
\begin{Remark}
We provide in this remark two classical examples of process $S$ satisfying the previous Assumption $\mathbf{(S)}$.

\vspace{0.5em}
\noindent 
$(\rm{aB})$ Assume that the asset $S$ is an arithmetic Brownian motion, with the following dynamic
\begin{equation*}
dS_t= b_tdt+\sigma_t  dW_t,\; S_0=x\in \mathbb R, 
\end{equation*}
where
$b$ and $\sigma$ are $\mathbb F$-predictable processes with $\sigma_t>0,\; \mathbb P-a.s.$.  Given an $\mathcal F_T$-measurable square integrable random variable $\xi$, using the self-financing Property, one can easily show that the corresponding pricing problem can be reduced to solve BSDE \eqref{bsdefinance} with $\theta:= \frac{b-r}{\sigma}$. In this case, the process $Y$ provides the value of the problem and the process $Z/\sigma$ gives the optimal number of asset owned at time $t$ to solve the pricing problem.

\vspace{0.5em}
\noindent $(\rm{gB})$ Assume that the dynamic of the asset $S$ is given by
\begin{equation*}
dS_t= b_tS_tdt+\sigma_t S_t dW_t,\; S_0=x\in \mathbb R, 
\end{equation*}
where
$b$ and $\sigma$ are $\mathbb F$-predictable processes with $\sigma_t>0,\; \mathbb P-a.s.$ Given an $\mathcal F_T$-measurable square integrable random variable $\xi$, it was showed in \cite{ELKPengQuenez} that the corresponding pricing problem can be reduced to solve BSDE \eqref{bsdefinance} with $\theta:= \frac{b-r}{\sigma}$. In this case, the process $Y$ provides the value of the problem and the process $Z/\sigma$ gives the optimal quantity of money invested in the risky asset to solve the pricing problem.
\end{Remark}

\noindent Most of models assume that $r$ is bounded to simplify the study. However,
as noticed in \cite{ELKHuang}, the assumption on the boundedness of the short rate $r$ rarely holds in
a market. In this section, we investigate the existence of densities for the laws of the components of the solution to \eqref{bsdefinance}. In this model, Assumptions $\mathbf{(A_1)}$ and $\mathbf{(A_2)}$ $(i)$ above in Section \ref{sectionaffinestoBSDE} become

\vspace{0.5em}
\noindent \textbf{(H1)} For any $p>0$, $\mathbb E\left[ e^{p\int_0^T r_s ds}\right]<+\infty$ and $\left(\int_0^\cdot \theta_t dW_t\right)$ is a BMO-martingale.

\noindent We thus have the following lemma
\begin{Lemma}\label{lemma:signY}Assume that $(\mathbf{H1})$ holds and that for any $p>0$, $\mathbb E\left[ |\xi|^p\right]<+\infty$. Then, BSDE \eqref{bsdefinance} admits a unique solution $(Y,Z)\in\mathcal S_{p}\times \mathcal H_{p}$ for any $p>1$. Besides, if $\xi \geq 0, \; \mathbb P-a.s.$ (resp. $\xi \leq 0, \; \mathbb P-a.s.$), then for any $t\in [0,T]$, $Y_t\geq 0,\mathbb P-a.s.$ (resp. $\xi \leq 0, \; \mathbb P-a.s.$) 
\end{Lemma}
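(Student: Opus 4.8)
The plan is to recognise BSDE \eqref{bsdefinance} as a particular instance of the affine BSDE \eqref{chapBioedsraffine}, to invoke Theorem \ref{thm:affine:existence} for existence, uniqueness and the \textit{a priori} estimate, and then to read off the sign of $Y$ from the linearisation formula already recorded in Remark \ref{rem:pb:signZ}. First I would rewrite \eqref{bsdefinance} in standard backward form,
\[ Y_t=\xi-\int_t^T\bigl(r_sY_s+\theta_sZ_s\bigr)\,ds-\int_t^T Z_s\,dW_s,\qquad t\in[0,T],\ \mathbb P\text{-a.s.}, \]
which is exactly \eqref{chapBioedsraffine} with $\lambda_s\equiv 0$, $\mu_s=-r_s$ and $\nu_s=-\theta_s$. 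It then remains to check that Assumptions $\mathbf{(A_1)}$ and $\mathbf{(A_2)}$ hold under $(\mathbf{H1})$ and the moment hypothesis on $\xi$. Assumption $\mathbf{(A_1)}$ for $\nu=-\theta$ is precisely the requirement that $\int_0^\cdot\theta_t\,dW_t\in\mathrm{BMO}(\mathbb P)$, which is part of $(\mathbf{H1})$. For $\mathbf{(A_2)}(i)$, since $|\mu_s|=|r_s|$ one has $\bigl\|\exp(\int_0^\cdot|\mu_s|\,ds)\bigr\|_{\mathcal S_p}^p=\mathbb E\bigl[e^{p\int_0^T|r_s|\,ds}\bigr]$, which is finite for every $p>1$ by the exponential integrability of $\int_0^T r_s\,ds$ demanded in $(\mathbf{H1})$; and $\mathbf{(A_2)}(ii)$ holds trivially because $\lambda\equiv 0$ and $\mathbb E[|\xi|^p]<+\infty$ by assumption. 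Theorem \ref{thm:affine:existence} then yields the unique solution $(Y,Z)\in\mathcal S_p\times\mathcal H_p$ for every $p>1$, together with estimate \eqref{apriori:briandconfortola}.

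For the sign of $Y_t$, I would argue as in Remark \ref{rem:pb:signZ} (see also \cite{ELKPengQuenez}). Since $\int_0^\cdot\theta_s\,dW_s\in\mathrm{BMO}(\mathbb P)$, so is $-\int_0^\cdot\theta_s\,dW_s$, hence $\mathcal E\bigl(-\int_0^\cdot\theta_s\,dW_s\bigr)$ is a uniformly integrable martingale by Theorem \ref{kazamakithm} and defines an equivalent probability $\mathbb Q$ under which $W^{\mathbb Q}_\cdot:=W_\cdot+\int_0^\cdot\theta_s\,ds$ is a Brownian motion. Writing the dynamics of $Y$ under $\mathbb Q$ and applying It\^o's formula to $e^{-\int_0^t r_s\,ds}Y_t$ gives $d\bigl(e^{-\int_0^t r_s\,ds}Y_t\bigr)=e^{-\int_0^t r_s\,ds}Z_t\,dW^{\mathbb Q}_t$, i.e. $e^{-\int_0^\cdot r_s\,ds}Y_\cdot$ is a $\mathbb Q$-local martingale. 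Once this is promoted to a true $\mathbb Q$-martingale (see the next paragraph), we obtain
\[ Y_t=\mathbb E^{\mathbb Q}_t\!\left[e^{-\int_t^T r_s\,ds}\,\xi\right],\qquad t\in[0,T]. \]
As $\mathbb Q\sim\mathbb P$ and $e^{-\int_t^T r_s\,ds}>0$, the sign of $Y_t$ is that of $\xi$: if $\xi\ge 0$ $\mathbb P$-a.s. then $Y_t\ge 0$ $\mathbb P$-a.s., and symmetrically if $\xi\le 0$ $\mathbb P$-a.s. then $Y_t\le 0$ $\mathbb P$-a.s.

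The only point requiring real care is the passage from the $\mathbb Q$-local martingale property of $e^{-\int_0^\cdot r_s\,ds}Y_\cdot$ to a genuine martingale, i.e. the uniform integrability that legitimises taking conditional expectations. Here I would combine the estimates $Y\in\mathcal S_p$ for all $p>1$ with the reverse H\"older (John--Nirenberg) inequality for the BMO exponential, which places $d\mathbb Q/d\mathbb P$ in $L^q(\mathbb P)$ for some $q>1$, and with the moments of $e^{\int_0^T r_s^-\,ds}$ coming from $(\mathbf{H1})$; a H\"older argument then bounds $\sup_{t}\mathbb E^{\mathbb Q}\bigl[|e^{-\int_0^t r_s\,ds}Y_t|^{1+\delta}\bigr]$ for small $\delta>0$, giving uniform integrability of the family. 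Everything else is a direct specialisation of Theorem \ref{thm:affine:existence} and of the linearisation already carried out in Remark \ref{rem:pb:signZ}.
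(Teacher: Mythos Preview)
Your argument is essentially the paper's own: recognise \eqref{bsdefinance} as the affine BSDE \eqref{chapBioedsraffine} with $\lambda\equiv 0$, $\mu=-r$, $\nu=-\theta$, invoke Theorem \ref{thm:affine:existence} for existence and uniqueness in $\mathcal S_p\times\mathcal H_p$, and obtain the sign of $Y_t$ from the linearised formula $Y_t=\mathbb E^{\mathbb Q}_t[\xi\,e^{-\int_t^T r_s\,ds}]$ under the Girsanov measure $\mathbb Q$ defined via the BMO martingale $-\int_0^\cdot\theta_s\,dW_s$. You in fact go further than the paper by justifying the passage from the $\mathbb Q$-local martingale to a true martingale via reverse H\"older for $d\mathbb Q/d\mathbb P$ combined with $Y\in\mathcal S_p$; the paper simply writes down the linearised formula.

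One small point to tighten: $\mathbf{(H1)}$ as stated gives $\mathbb E[e^{p\int_0^T r_s\,ds}]<\infty$ only for $p>0$, which does not by itself imply $\mathbb E[e^{p\int_0^T |r_s|\,ds}]<\infty$ (equivalently, moments of $e^{\int_0^T r_s^-\,ds}$), yet this is what $\mathbf{(A_2)}(i)$ and your final H\"older step require. The paper glosses over the same verification; in the intended Va\v{s}\'{\i}\v{c}ek application $r$ is Gaussian and all exponential moments are available, so the point is harmless there, but in your write-up you should either strengthen $\mathbf{(H1)}$ to two-sided exponential moments or add a line explaining why the missing direction holds in the case at hand.
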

\begin{proof} The proof of the existence of a unique solution $(Y,Z)$ in $\in\mathcal S_{p}\times \mathcal H_{p}$ is a consequence of Theorem \ref{thm:affine:existence}. Using a linearisation, we get
$$Y_t=\mathbb E^{\mathbb Q}_t\left[\xi e^{-\int_t^T r_s ds} \right], $$
where $$\frac{d\mathbb Q}{d\mathbb P}:= e^{-\int_0^T \theta_s dW_s -\frac12 \int_0^T |\theta_s|^2 ds}.$$
Thus, we notice that the sign of the $Y$ process is given by the sign of $\xi$.
\end{proof}

\subsection{Application to Va\v{s}\`i\v{c}ek Model}
Let $a,b\geq 0$ and $\varpi>0$. Assume that the rate of the market $r$ is the solution of the following SDE.
\begin{equation}\label{vasicek}dr_t=a(b-r_t)dt+\varpi dW_t,\; r_0\in\mathbb R.\end{equation}

\begin{Lemma}\label{lemma:derivepos} Let $r:=(r_t)_{t\in [0,T]}$ be the solution to SDE \eqref{vasicek}. Then, for any $p>1$, $q\geq 1$ and for any $t\in [0,T]$, $r_t\in \mathbb D^{q,p}$. Besides, for any $0\leq u\leq t\ $, $D_u r_t=\varpi \geq 0$, $\mathbb P-a.s.$ and for any $q>1$, $D^q r_t =0, \; \mathbb P-a.s.$.
\end{Lemma}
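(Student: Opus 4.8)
The plan is to exploit the fact that \eqref{vasicek} is a linear SDE with constant diffusion coefficient, so that $r$ is explicitly computable and is an affine functional of the Brownian path. First I would solve \eqref{vasicek} by the integrating-factor method: writing the equation as $dr_t+ar_t\,dt=ab\,dt+\varpi\,dW_t$, multiplying by $e^{as}$ and integrating between $0$ and $t$ yields, for every $t\in[0,T]$,
\begin{equation*}
r_t=r_0e^{-at}+b\left(1-e^{-at}\right)+\varpi\int_0^t e^{-a(t-s)}\,dW_s,\qquad \mathbb P\text{-a.s.}
\end{equation*}
Hence $r_t$ is a Gaussian random variable, equal to a deterministic constant plus a Wiener integral whose integrand $g_t$, given by $g_t(s)=\varpi e^{-a(t-s)}\mathbf 1_{\{s\le t\}}$, is a deterministic, bounded element of $\mathfrak h$; equivalently, $r_t$ lies, up to an additive constant, in the first Wiener chaos.

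Next I would invoke the classical fact that such random variables are infinitely Malliavin differentiable. A Wiener integral with deterministic square-integrable integrand belongs to $\mathbb D^{q,p}$ for every integer $q\ge 1$ and every $p>1$, and adding a deterministic constant does not affect differentiability, so $r_t\in\mathbb D^{q,p}$ for all $q\ge 1$, $p>1$, which is the first assertion. For the derivative, by linearity of $\nabla$ together with the elementary rule that the Malliavin derivative of a Wiener integral is its (deterministic) integrand, one gets that $D_u r_t$, for $0\le u\le t$, is a deterministic and nonnegative quantity (the constant part of $r_t$ contributing nothing), whence $D_u r_t\ge 0$, $\mathbb P$-a.s. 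Finally, since $Dr_t$ admits a non-random version, its own Malliavin derivative vanishes, and iterating this gives $D^q r_t=0$, $\mathbb P$-a.s., for every $q>1$ — equivalently, an element of the first Wiener chaos has vanishing second- and higher-order Malliavin derivatives.

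There is essentially no genuine obstacle here: the statement reduces to the explicit Gaussian representation of $r$ and to standard facts about the Malliavin calculus of first-chaos variables. The only points deserving a line of justification are (i) the simultaneous membership $r_t\in\mathbb D^{q,p}$ for all orders $q$ and all exponents $p>1$, which follows from $r_t$ lying in a finite Wiener chaos (alternatively, one may invoke the general differentiability theory for solutions of SDEs with smooth coefficients, the coefficients here being affine with constant diffusion), and (ii) the identification of the version of $Dr_t$ with the deterministic function $g_t$, which is what legitimizes differentiating once more and obtaining $0$.
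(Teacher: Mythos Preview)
Your argument is correct and follows essentially the same route as the paper: solve the linear SDE explicitly by the integrating factor, recognise $r_t$ as a deterministic constant plus a Wiener integral (first Wiener chaos), and read off the Malliavin derivatives from there. One small remark: your computation actually gives $D_u r_t=\varpi e^{-a(t-u)}$ for $0\le u\le t$, which is indeed deterministic and nonnegative as you state; the value $\varpi$ appearing in the lemma's statement (and in the paper's proof) is only exact when $a=0$, though this does not affect any subsequent use of the lemma, which only relies on nonnegativity and on $D^q r_t=0$ for $q>1$.
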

\begin{proof}
Let $r:=(r_t)_{t\in [0,T]}$ be the solution to SDE \eqref{vasicek}. Notice that $r_t$ is Malliavin differentiable (see e.g. \cite[Theorem 2.2.1]{Nualart_Book}). Besides, as an Ornstein-Uhlenbeck process, $r_t$ can be computed explicitly
\begin{equation}\label{vasicek:expression}
r_t= r_0 e^{-at}+b(1-e^{-at})+\varpi e^{-at}\int_0^t e^{as}dW_s.
\end{equation}
Taking the Malliavin derivative, one obtains directly that for any $r_t \in \mathbb D^{q,p}$ for any $p>1$, $q\geq 1$. Besides for any $0\leq u\leq t \leq T$, $D_u r_t =\varpi \geq 0$, $\mathbb P-a.s.$ and for any $q>1$, $D^q r_t =0, \; \mathbb P-a.s.$
\end{proof}

\noindent Since we aim at applying Bouleau and Hirsch Criterion (see Theorem \ref{BH}), we first show that the components $Y_t$ and $Z_t$ of the solution to BSDE \eqref{bsdefinance} are Malliavin differentiable. In this section we will work under Assumption $\mathbf{(\Theta)}$ (see Section \ref{soussection:z:linear}) since we aim at applying the results of Section \ref{soussection:z:linear} to investigate the existence of densities for both the $Y_t$ and the $Z_t$ components. Although this assumption is really restrictive, we can not do better as explained in Remark \ref{rem:pb:signZ}. However, for the following result dealing with the Malliavin differentiability of $Y_t$ and $Z_t$, one could make weaker Assumption $\mathbf{(\Theta)}$ by considering that Assumption $\mathbf{(A_1)}$ holds.
\begin{Proposition}\label{bsde_MD_vasicek} Let $\xi \in \mathbb D^{1,p}$ for any $p>1$. Let $r$ be the unique solution to SDE \eqref{vasicek} and $\theta$ satisfying Assumption $\mathbf{(\Theta)}$. Then, BSDE \eqref{bsdefinance:asian} admits a unique solution $(Y,Z)\in\mathcal S_{p}\times \mathcal H_{p}$ for any $p>1$. Besides, for any $p>1$ and $t\in [0,T]$, $Y_t\in \mathbb D^{1,p}$ and $Z\in L^2([t,T]; \mathbb D^{1,p}) $.
\end{Proposition}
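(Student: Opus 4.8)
The plan is to recognize that the BSDE in the statement is the affine BSDE \eqref{chapBioedsraffine} with coefficients $\lambda\equiv0$, $\mu_s=-r_s$ and $\nu_s=-\theta_s$, and then simply to check that Assumptions $\mathbf{(A_1)}$, $\mathbf{(A_2)}$, $\mathbf{(DA_1)}$ and $\mathbf{(DA_2)}$ of Section \ref{sectionaffinestoBSDE} hold. Once this is done, Theorem \ref{thm:affine:existence} will give the unique solution $(Y,Z)\in\mathcal S_p\times\mathcal H_p$ for every $p>1$ together with the \emph{a priori} estimate \eqref{apriori:briandconfortola}, and Theorem \ref{chapBiothm:diff:affine:BC} will give, for every $p>1$ and $t\in[0,T]$, that $Y_t\in\mathbb D^{1,p}$, $Z\in L^2([t,T];\mathbb D^{1,p})$, and that $(DY,DZ)$ solves \eqref{chapBioeq:DY:thm:BC:affine}. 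Because the generator is affine and the coefficients $r,\theta$ do not depend on $(y,z)$, the verification will only require size and continuity properties of $r$ and $\theta$, all of which I would read off from the explicit Ornstein--Uhlenbeck representation \eqref{vasicek:expression} and from Lemma \ref{lemma:derivepos}.

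For $\mathbf{(A_1)}$ and $\mathbf{(A_2)}$: under $\mathbf{(\Theta)}$ the process $\theta$ is deterministic (and, as is standard for \eqref{chapBioedsraffine}, square integrable on $[0,T]$), so for any stopping time $\tau\le T$ one has $\mathbb E[\int_\tau^T|\theta_s|^2\,ds\,|\,\mathcal F_\tau]=\int_\tau^T|\theta_s|^2\,ds\le\|\theta\|_{\mathfrak h}^2$, which is $\mathbf{(A_1)}$ with $\nu=-\theta$. For $\mathbf{(A_2)}(i)$ I would use that $t\mapsto\int_0^t|r_s|\,ds$ is nondecreasing, so that $\sup_{0\le t\le T}\exp(p\int_0^t|r_s|\,ds)=\exp(p\int_0^T|r_s|\,ds)\le\exp(pT\sup_{0\le s\le T}|r_s|)$, and then bound, via \eqref{vasicek:expression}, $\sup_{0\le s\le T}|r_s|\le|r_0|+|b|+\varpi\sup_{0\le s\le T}|\int_0^s e^{au}\,dW_u|$; the running maximum of the Gaussian martingale $\int_0^\cdot e^{au}\,dW_u$ over $[0,T]$ has Gaussian tails (e.g. by a deterministic time change and Doob's maximal inequality), hence exponential moments of all orders, so $\exp(\int_0^\cdot|r_s|\,ds)\in\mathcal S_p$ for every $p>1$. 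Finally $\lambda\equiv0$ and $\xi\in\mathbb D^{1,p}\subset L^p$ for all $p>1$ give $\mathbf{(A_2)}(ii)$, so Theorem \ref{thm:affine:existence} applies.

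For $\mathbf{(DA_1)}$ I would note that $\xi\in\mathbb D^{1,p}$ is assumed, that $\lambda\equiv0$ is trivially in $L^2([0,T];\mathbb D^{1,p})$, that by Lemma \ref{lemma:derivepos} we have $r_t\in\mathbb D^{1,p}$ with the bounded derivative $D_ur_t=\varpi$ (whence $\mathbb E[\int_0^T\|D_\cdot r_t\|_{\mathfrak h}^2\,dt]=\varpi^2T^2/2<+\infty$), so $\mu=-r\in L^2([0,T];\mathbb D^{1,p})$, and that $\theta$, being deterministic and square integrable, lies in $L^2([0,T];\mathbb D^{1,p})$ with zero Malliavin derivative. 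For $\mathbf{(DA_2)}$, the convergence for $\nu$ is immediate since $\theta$ is deterministic, hence $\theta_\cdot(\omega+\varepsilon_k h)=\theta_\cdot(\omega)$; for $\mu$, the Cameron--Martin translation applied to the Wiener integral in \eqref{vasicek:expression} gives $r_t(\omega+\varepsilon_k h)-r_t(\omega)=\varpi\varepsilon_k e^{-at}\int_0^t e^{au}\dot h(u)\,du$, and a Cauchy--Schwarz bound shows $\|r_\cdot(\omega+\varepsilon_k h)-r_\cdot(\omega)\|_{L^2([0,T])}\le C_h\varepsilon_k\to0$ deterministically, hence in probability. Then Theorem \ref{chapBiothm:diff:affine:BC} closes the argument.

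I do not expect any genuine obstacle: the statement is essentially the observation that the Va\v{s}\`i\v{c}ek short rate fits the affine framework of Section \ref{sectionaffinestoBSDE}. The only two points needing more than a one-line check are the exponential integrability of $\int_0^T|r_s|\,ds$ (Gaussian concentration of the running maximum of the Ornstein--Uhlenbeck martingale part) and the identification of the shifted process $r(\omega+\varepsilon_k h)$ through the Cameron--Martin formula; both follow readily from \eqref{vasicek:expression}.
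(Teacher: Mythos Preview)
Your proposal is correct and follows exactly the route the paper takes: recognize the equation as the affine BSDE \eqref{chapBioedsraffine} with $\lambda\equiv0$, $\mu=-r$, $\nu=-\theta$, verify Assumptions $\mathbf{(A_1)}$, $\mathbf{(A_2)}$, $\mathbf{(DA_1)}$, $\mathbf{(DA_2)}$, and invoke Theorem \ref{chapBiothm:diff:affine:BC}. In fact you supply considerably more detail than the paper's own proof, which simply asserts in one sentence that these assumptions hold and the theorem applies; your explicit checks of the exponential integrability of $\int_0^T|r_s|\,ds$ via \eqref{vasicek:expression} and of the Cameron--Martin shift for $\mathbf{(DA_2)}$ are sound and fill in what the paper leaves to the reader.
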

\begin{proof}
By noticing that Assumptions $\mathbf{(A_1)}$, $\mathbf{(A_2)}$, $\mathbf{(DA_1)}$ and $\mathbf{(DA_2)}$ hold and by applying Theorem \ref{chapBiothm:diff:affine:BC}, we deduce that BSDE \eqref{bsdefinance:asian} admits a unique solution $(Y,Z)\in\mathcal S_{p}\times \mathcal H_{p}$ for any $p>1$ and that if $t\in [0,T]$, $Y_t\in \mathbb D^{1,p}$ and $Z\in L^2([t,T]; \mathbb D^{1,p}) $ for any $p>1$.
\end{proof}

\noindent In this particular model and as said in Section \ref{chapBioMbio:section:discussion}, we provide now conditions on $\xi$ and its Malliavin derivatives ensuring existence of densities for both the law of the $Y_t$ component and for the law of the $Z_t$ component of the solution to BSDE \eqref{bsdefinance}.
\begin{Theorem} \label{thm:finance:densityYZ}Let $\xi \in \mathbb D^{1,p}$ for any $p>1$. Assume that $\mathbf{(\Theta)}$ holds and that one of the following two assumptions is satisfied for $A\subset \Omega$ such that $\mathbb P(A)>0$

\vspace{0.5em}
\noindent $\boldsymbol{(\xi+)}$ $\xi\geq 0$, $D_u \xi \leq  0,\ \lambda(du)-a.e.,\;   D_u \xi < 0,\ \lambda(du)-a.e.\text{ on } A,$

\vspace{0.5em}
\noindent $\boldsymbol{(\xi-)}$ $\xi\leq 0$, $D_u \xi \geq  0,\ \lambda(du)-a.e.,\;   D_u \xi > 0,\ \lambda(du)-a.e.\text{ on } A,$

\noindent then for any $t\in (0,T]$, the law of $Y_t$ is absolutely continuous with respect to the Lebesgue measure.\vspace{0.3em}

\noindent Assume now that $\xi \in \mathbb D^{2,p}$ for any $p>1$ and assume in addition to $\boldsymbol{(\xi+)}$ that  
\begin{equation}\label{d2xi_densityZ}D_v(D_u \xi) \geq  0,\ (\lambda\otimes \lambda)(du,dv)-a.e.,\;   D_v(D_u \xi) >0,\ (\lambda\otimes \lambda)(du,dv)-a.e.\text{ on } A\end{equation}
or in addition to $\boldsymbol{(\xi-)}$ that
 \begin{equation}\label{d2xi_densityZ:-}
 D_v(D_u \xi) \leq  0,\ (\lambda\otimes \lambda)(du,dv)-a.e.,\;   D_v(D_u \xi) <0,\ (\lambda\otimes \lambda)(du,dv)-a.e.\text{ on } A, \end{equation}
 then the law of $Z_t$ has a density with respect to the Lebesgue measure.
\end{Theorem}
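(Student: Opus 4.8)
The plan is to read BSDE \eqref{bsdefinance} as the affine BSDE \eqref{chapBioedsraffine} with $\lambda\equiv 0$, $\mu_s=-r_s$ and $\nu_s=-\theta_s$, and then to apply Theorem \ref{chapBiothm:densityaffine} to $Y_t$ and Theorem \ref{densityZ:linear} to $Z_t$, after checking that in this concrete model the structural hypotheses of those two theorems collapse into the sign conditions $\boldsymbol{(\xi+)}$, $\boldsymbol{(\xi-)}$ and \eqref{d2xi_densityZ}, \eqref{d2xi_densityZ:-}. The first step is exactly (the proof of) Proposition \ref{bsde_MD_vasicek}: since $\theta$ is deterministic and square integrable, $\int_0^\cdot\nu_s\,dW_s=-\int_0^\cdot\theta_s\,dW_s$ is a BMO-martingale, so $\mathbf{(A_1)}$ holds; $r$ is Gaussian by \eqref{vasicek:expression}, so $\exp(\int_0^\cdot|r_s|\,ds)\in\mathcal S_p$ for every $p$ and, with $\xi\in\mathbb D^{1,p}$ and $\lambda\equiv 0$, this gives $\mathbf{(A_2)}$; and $\mathbf{(DA_1)}$, $\mathbf{(DA_2)}$ follow from Lemma \ref{lemma:derivepos} (for $\mu=-r$) and from $\nu=-\theta$ being non-random. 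Hence $(Y,Z)$ exists and is unique in $\mathcal S_p\times\mathcal H_p$ for all $p>1$, is Malliavin differentiable by Theorem \ref{chapBiothm:diff:affine:BC}, and $(D_uY_t,D_uZ_t)$ solves \eqref{chapBioeq:DY:thm:BC:affine}, which here reduces (using $D_u\lambda_s=0$ and $D_u\nu_s=0$) to
\begin{align*}
D_uY_t=D_u\xi+\int_t^T\!\big(-(D_ur_s)Y_s-r_sD_uY_s-\theta_sD_uZ_s\big)\,ds-\int_t^T\!D_uZ_s\,dW_s.
\end{align*}

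For the $Y_t$ statement I would check that $\boldsymbol{(\xi+)}$ is Assumption $\mathbf{(\textbf{\textsc{aH}}-)}$ of Theorem \ref{chapBiothm:densityaffine} and $\boldsymbol{(\xi-)}$ is $\mathbf{(\textbf{\textsc{aH}}+)}$. Indeed $\underline{d\lambda}(t)=\overline{d\lambda}(t)=0$ since $\lambda\equiv 0$; and by Lemma \ref{lemma:signY}, $\xi\geq 0$ forces $Y_s\geq 0$ for every $s$, so $D_u\mu_sY_s+D_u\nu_sZ_s=-(D_ur_s)Y_s\leq 0$ by Lemma \ref{lemma:derivepos}, i.e. $(\mathcal P-)$ holds; combined with $D_u\xi\leq 0$ a.e. and $D_u\xi<0$ on $A$, this is precisely $\mathbf{(\textbf{\textsc{aH}}-)}$. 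Symmetrically, $\xi\leq 0$ gives $Y_s\leq 0$, hence $(\mathcal P+)$, which with $D_u\xi\geq 0$ a.e. and $D_u\xi>0$ on $A$ is $\mathbf{(\textbf{\textsc{aH}}+)}$. Theorem \ref{chapBiothm:densityaffine} then yields that the law of $Y_t$ is absolutely continuous for every $t\in(0,T]$.

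For the $Z_t$ statement, BSDE \eqref{bsdefinance} is also the linear BSDE \eqref{chapBioedsr:linearbsde} with $\tilde f(s,y)=-r_sy$ and with the (deterministic) process $-\theta$ in front of $z$, so $\mathbf{(\Theta)}$ is in force; moreover $\xi\in\mathbb D^{2,p}\subset\mathbb D^{2,2}$ and $\tilde f$ is affine, hence twice continuously differentiable in $y$. Since $\tilde f_y(s,y)=-r_s$ and $D_t\tilde f(s,y)=-yD_tr_s$, Lemma \ref{lemma:derivepos} gives $D_t\tilde f_y(t,Y_t)=-D_tr_t\leq 0$ and $(D_t\tilde f)_y(t,Y_t)=-D_tr_t\leq 0$, so Assumption $(f-)$ of Theorem \ref{densityZ:linear} holds; and $(D_t\tilde f)(s,y)=-yD_tr_s$ is non-random, so $D_{t'}(D_t\tilde f)(t,Y_t)=0$, which satisfies both the ``$\geq 0$'' requirement in $\mathbf{(DZ+)}$ and the ``$\leq 0$'' requirement in $\mathbf{(DZ-)}$. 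Under $\boldsymbol{(\xi+)}$, $Y_t\geq 0$ (Lemma \ref{lemma:signY}) gives $D_v\tilde f(t,Y_t)=-Y_tD_vr_t\leq 0$, so $\mathbf{(DY-)}$ holds, and together with $(f-)$ and \eqref{d2xi_densityZ} this is exactly Assumption $\mathbf{(DZ+)}$; under $\boldsymbol{(\xi-)}$, $Y_t\leq 0$ gives $D_v\tilde f(t,Y_t)\geq 0$, so $\mathbf{(DY+)}$ holds, and with $(f-)$ and \eqref{d2xi_densityZ:-} this is Assumption $\mathbf{(DZ-)}$. In either case Theorem \ref{densityZ:linear} gives that the law of $Z_t$ is absolutely continuous for every $t\in(0,T]$.

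The only genuinely delicate point is the bookkeeping of signs. Because the drift coefficient of $Y$ is $\mu=-r$, its Malliavin derivative $D_u\mu_s=-D_ur_s$ is non-positive (Lemma \ref{lemma:derivepos}), which is why the ``positive'' terminal-data hypothesis $\xi\geq 0$ has to be matched with the ``minus'' branches $\mathbf{(\textbf{\textsc{aH}}-)}$, $\mathbf{(DY-)}$ and the $(f-)$-branch of $\mathbf{(DZ+)}$, rather than with the ``plus'' ones; recognising these pairings, together with the two simplifications $\lambda\equiv 0$ (so $\underline{d\lambda}=\overline{d\lambda}=0$) and $(D_t\tilde f)$ non-random (so the second-order drift term $D_{t'}(D_t\tilde f)(t,Y_t)$ vanishes identically), is precisely what makes the hypotheses of Theorems \ref{chapBiothm:densityaffine} and \ref{densityZ:linear} reduce to the clean statements of the present theorem.
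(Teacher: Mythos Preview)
Your proof is correct and follows essentially the same route as the paper: you identify BSDE \eqref{bsdefinance} with the affine BSDE \eqref{chapBioedsraffine} (resp.\ the linear BSDE \eqref{chapBioedsr:linearbsde} with $\tilde f(s,y)=-r_sy$), invoke Proposition \ref{bsde_MD_vasicek} for well-posedness and differentiability, and then verify that the sign assumptions $\boldsymbol{(\xi\pm)}$ together with Lemmas \ref{lemma:signY} and \ref{lemma:derivepos} yield $\mathbf{(\textbf{\textsc{aH}}\mp)}$ for Theorem \ref{chapBiothm:densityaffine} and the pairings $\mathbf{(DZ+)}/\mathbf{(DY-)}/(f-)$ (resp.\ $\mathbf{(DZ-)}/\mathbf{(DY+)}/(f-)$) for Theorem \ref{densityZ:linear}. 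Your write-up is in fact more explicit than the paper's about the sign bookkeeping (in particular the observation that $D_{t'}(D_t\tilde f)(t,Y_t)=0$ because $D_tr_s$ is deterministic), which is a welcome clarification.
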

\begin{proof}
We denote by $(Y,Z)$ the unique solution in $\mathcal S_{p}\times \mathcal H_{p}$ for any $p>1$ of BSDE \eqref{bsdefinance} with for any $p>1$ and $t\in [0,T]$, $Y_t\in \mathbb D^{1,p}$ and $Z\in L^2([t,T]; \mathbb D^{1,p}) $ by using Proposition \ref{bsde_MD_vasicek}. Let Assumption $\boldsymbol{(\xi+)}$ be true. Then, according to Theorem \ref{chapBiothm:densityaffine} together with Lemmas \ref{lemma:signY} and \ref{lemma:derivepos}, for any $t\in (0,T]$ the law of $Y_t$ has a density with respect to the Lebesgue measure. Recall that under Assumption $\boldsymbol{(\xi+)}$, according to Remark \ref{chapBioremarksigne}, we have for any $t\in (0,T]$, $D_uY_t\leq 0, \; \lambda(du)\otimes \mathbb P-a.e.$ By assuming moreover that Conditions \eqref{d2xi_densityZ} holds and by applying Theorem \ref{densityZ:linear} with $\tilde f(t,Y_t):=-r_t Y_t$, we deduce that $\mathbf{(DZ+)}, \mathbf{(DY-)}$ and $(f-)$ hold. Then $D_v Z_t>0$ for any $0\leq v<t\leq T$, $\mathbb P$-almost surely. Thus, according to Theorem \ref{BH}, the law of $Z_t$ has a density with respect to Lebesgue measure for any $t\in (0,T]$.\vspace{0.3em}

\noindent The proof under $\boldsymbol{(\xi-)}$ is similar as a consequence of Theorem  \ref{densityZ:linear} by showing that Assumptions $\mathbf{(DZ-)}, \mathbf{(DY+)}$ and $(f-)$ hold.
\end{proof}

\subsubsection{Example 1: Asian options}
In this section, we investigate pricing problems of Asian options, \textit{i.e.} where the liability $\xi$ is a function of the mean of the risky asset $S$. We assume that Assumption $(\mathbf{S})$ holds, thus the pricing problem is reduced to solve the affine non-Markovian BSDE 
\begin{equation}\label{bsdefinance:asian}Y_t= \xi-\int_t^T(r_s Y_s+\theta_s Z_s )ds -\int_t^TZ_s dW_s,\; \xi=f\left(\int_0^T g(W_s)ds \right),\end{equation}
where $f,g$ are two continuous maps from $\mathbb R$ into $\mathbb R$.

\begin{Proposition}\label{propvasicek_density} Assume that $\mathbf{(\Theta)}$ hold. Let $r$ be the unique solution to SDE \eqref{vasicek}. Assume moreover that $f,g$ are twice differentiable $\lambda(dx)$-a.e. and one of the following assumption is satisfied

\vspace{0.5em}
\noindent $\mathbf{(\mathcal A1+)} $
\begin{itemize}
\item[$(i)$] $f\geq 0$, $f'\geq 0$, $g'\leq 0$ and $f'>0,\; g'<0$ on a set A with positive Lebesgue measure,\vspace{0.3em}
\item[$(ii)$] moreover $f"\geq 0$, $g''\geq 0$, and $f''>0$ or $g''>0$ on A,
\end{itemize}\vspace{0.5em}

\noindent $\mathbf{(\mathcal A2+)} $
\begin{itemize}
\item[$(i)$] $f\geq 0$, $f'\leq 0$, $g'\geq 0$ and $f'<0,\; g'>0$ on a set A with positive Lebesgue measure,\vspace{0.3em}
\item[$(ii)$] moreover $f"\geq 0$, $g''\leq 0$, and $f''>0$ or $g''<0$ on A,
\end{itemize}\vspace{0.5em}

\noindent $\mathbf{(\mathcal A1-)} $
\begin{itemize}
\item[$(i)$] $f\leq 0$, $f'\geq 0$, $g'\geq 0$ and $f'>0,\; g'>0$ on a set A with positive Lebesgue measure,\vspace{0.3em}
\item[$(ii)$] moreover $f"\leq 0$, $g''\leq 0$, and $f''<0$ or $g''<0$ on A.
\end{itemize}\vspace{0.5em}

\noindent $\mathbf{(\mathcal A2-)} $
\begin{itemize}
\item[$(i)$] $f\leq 0$, $f'\leq 0$, $g'\leq 0$ and $f'<0,\; g'<0$ on a set A with positive Lebesgue \vspace{0.3em}
\item[$(ii)$] moreover $f"\leq 0$, $g''\geq 0$, and $f''<0$ or $g''>0$ on A.
\end{itemize}

\noindent  Then, by denoting $(Y,Z)$ the unique solution of BSDE \eqref{bsdefinance:asian}, for any $t\in (0,T]$ both the law of $Y_t$ and the law of $Z_t$ are absolutely continuous with respect to Lebesgue measure.
\end{Proposition}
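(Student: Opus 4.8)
The plan is to deduce everything from Theorem \ref{thm:finance:densityYZ}, by exhibiting the first and second Malliavin derivatives of the terminal payoff $\xi=f\big(\int_0^T g(W_s)\,ds\big)$ and checking, case by case, the sign conditions $\boldsymbol{(\xi+)}$/$\boldsymbol{(\xi-)}$ and \eqref{d2xi_densityZ}/\eqref{d2xi_densityZ:-}. Set $I_T:=\int_0^T g(W_s)\,ds$. Since $g$ is continuous, $I_T$ is a well-defined $\mathcal F_T$-measurable random variable with moments of every order, and under the twice-differentiability assumptions on $f$ and $g$ the chain rule for the Malliavin derivative gives $\xi\in\mathbb D^{2,p}$ for every $p>1$ (using again that $W$ has moments of all orders). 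In particular Proposition \ref{bsde_MD_vasicek} applies: BSDE \eqref{bsdefinance:asian} has a unique solution $(Y,Z)\in\mathcal S_p\times\mathcal H_p$ for all $p>1$, with $Y_t\in\mathbb D^{1,p}$ and $Z\in L^2([t,T];\mathbb D^{1,p})$, so the remaining work is a pure sign analysis of $D\xi$ and $D^2\xi$.

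The next step is the computation. For $0\le u,v\le T$, the chain rule applied to $\omega\mapsto f(I_T)$ together with $D_uI_T=\int_u^T g'(W_s)\,ds$ gives
\[
D_u\xi=f'(I_T)\int_u^T g'(W_s)\,ds,
\]
and applying the chain rule once more to $\omega\mapsto f'(I_T)$ and to $\omega\mapsto\int_u^T g'(W_s)\,ds$,
\begin{align*}
D_v(D_u\xi)&=f''(I_T)\Big(\int_u^T g'(W_s)\,ds\Big)\Big(\int_v^T g'(W_s)\,ds\Big)\\
&\quad+f'(I_T)\int_{u\vee v}^T g''(W_s)\,ds.
\end{align*}

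Then comes the sign bookkeeping. Under $\mathbf{(\mathcal A1+)}$ (resp. $\mathbf{(\mathcal A2+)}$) one has $\xi=f(I_T)\ge0$; $f'$ and $\int_u^T g'(W_s)\,ds$ carry opposite weak signs, so $D_u\xi\le0$, $\lambda(du)$-a.e.; and in $D_v(D_u\xi)$ the first term is $\ge0$ because $f''\ge0$ and the two $g'$-integrals have the same sign (both $\le0$ under $\mathbf{(\mathcal A1+)}$, both $\ge0$ under $\mathbf{(\mathcal A2+)}$), while the second term is $\ge0$ because $f'$ and $\int g''$ carry the same weak sign; hence $\boldsymbol{(\xi+)}$ and \eqref{d2xi_densityZ} hold apart from their strict parts. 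Symmetrically, $\mathbf{(\mathcal A1-)}$ or $\mathbf{(\mathcal A2-)}$ yield $\xi\le0$, $D_u\xi\ge0$ and $D_v(D_u\xi)\le0$, i.e. $\boldsymbol{(\xi-)}$ and \eqref{d2xi_densityZ:-} apart from their strict parts.

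The delicate point — and the step I expect to be the main obstacle — is turning the "positive Lebesgue measure" hypothesis on the strict-sign set of $f,g$ into a set $A\subset\Omega$ with $\mathbb P(A)>0$ on which $D_u\xi$ and $D_v(D_u\xi)$ are strictly signed for a.e. $u,v$. I would fix a point $x_0$ in the interior of the relevant set where $g'$ (and, depending on the case, $g''$ or the composition of $f''$ along $Tg$) carries its strict sign, and invoke the support theorem for Brownian motion: $\mathbb P\big(\sup_{0\le s\le T}|W_s-x_0|<\delta\big)>0$ for every $\delta>0$. On this event, for $\delta$ small enough $W_s$ stays in a neighbourhood of $x_0$ on which all the relevant derivatives keep their strict sign and $I_T$ stays close to $Tg(x_0)$, so that $\int_u^T g'(W_s)\,ds$, $\int_{u\vee v}^T g''(W_s)\,ds$, $f'(I_T)$ and $f''(I_T)$ are all strictly signed; this supplies the strict parts of $\boldsymbol{(\xi\pm)}$ and of \eqref{d2xi_densityZ}/\eqref{d2xi_densityZ:-}. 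With $\mathbf{(\Theta)}$ in force, all hypotheses of Theorem \ref{thm:finance:densityYZ} are then met, and we conclude that for every $t\in(0,T]$ both the law of $Y_t$ and the law of $Z_t$ are absolutely continuous with respect to the Lebesgue measure.
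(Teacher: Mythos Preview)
Your approach is essentially the same as the paper's: compute $D_u\xi$ and $D_v(D_u\xi)$ by the chain rule, verify case by case that the sign hypotheses of Theorem~\ref{thm:finance:densityYZ} (namely $\boldsymbol{(\xi\pm)}$ and \eqref{d2xi_densityZ}/\eqref{d2xi_densityZ:-}) are met, and conclude. Two minor remarks: (i) your formula has $\int_{u\vee v}^T g''(W_s)\,ds$ where the paper writes $\int_{u\wedge v}^T$; your version is the correct one (since $D_vW_s=\mathbf 1_{v\le s}$), though for the sign argument the distinction is immaterial; (ii) you are more careful than the paper about the strict-sign step, correctly flagging that passing from ``$f',g'$ strictly signed on a set of positive Lebesgue measure in $\mathbb R$'' to ``$D_u\xi$ strictly signed on a set $A\subset\Omega$ with $\mathbb P(A)>0$'' requires an argument such as the support theorem, whereas the paper simply asserts the conclusion.
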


\begin{proof} 
Notice that for any $0\leq u\leq T$ we have
$$D_u \xi =f'\left( \int_0^T g(W_s)ds\right)\int_u^T g'(W_s)ds, $$
and for any $0\leq v\leq T$ we have
\begin{align*}
D_v (D_u \xi)&=f''\left( \int_0^T g(W_s)ds\right)\int_u^T g'(W_s)ds\int_v^T g'(W_s)ds\\
&\hspace{1em}+f'\left( \int_0^T g(W_s)ds\right)\int_{u\wedge v}^T g''(W_s) ds. \end{align*}
Thus, by noticing that Assumption $\mathbf{(\mathcal A1+)}$ $(i)$ or $\mathbf{(\mathcal A2+)}$ $(i)$ (resp. Assumption $\mathbf{(\mathcal A1-)}$ $(i)$ or $\mathbf{(\mathcal A2-)}$ $(i)$) ensure that $\boldsymbol{( \xi+)} $ (resp. $\boldsymbol{( \xi-)} $) is true, we deduce from the first part of Theorem \ref{thm:finance:densityYZ} above that the law of $Y_t$ has a density with respect to Lebesgue's measure for any $t\in (0,T]$. Moreover, if Assumption $\mathbf{(\mathcal A1+)}$ $(ii)$ or $\mathbf{(\mathcal A2+)}$ $(ii)$  (resp. Assumption $\mathbf{(\mathcal A1-)}$ $(ii)$ or $\mathbf{(\mathcal A2-)}$ $(ii)$) holds, then Condition \eqref{d2xi_densityZ} is satisfied (resp. \eqref{d2xi_densityZ:-}). By applying Theorem \ref{thm:finance:densityYZ} we deduce that $Y_t$ and $Z_t$ have absolutely continuous law with respect to Lebesgue measure.
 \end{proof}
 
\subsubsection{Example 2: Lookback options}
In this section, we aim at applying Theorem \ref{thm:finance:densityYZ} to lookback options. Let Assumption $\mathbf{(S)}$ be true.
Set $M:= (M_t)_{t\in [0,T]}$, where $M_t= \sup_{s\in [0,t]}W_s$. 
The following lemma is a direct consequence of \cite[Lemma 1.1]{GKH}, \cite[Remark 8.16 and Problem 8.17]{KS}.
\begin{Lemma}\label{lemmaGKH}
 $M_t \in \mathbb D^{1,2}$ and
$D_r M_t= \mathbf 1_{r\leq \tau_t}$, where $\tau_t$ is almost surely unique such that defined by $W_{\tau_t}= M_t. $ More precisely, for any $0\leq s\leq t$,
$$\mathbb P (\tau_t \leq s)= \frac 2\pi \text{arcsin}\sqrt{\frac st} .$$
\end{Lemma}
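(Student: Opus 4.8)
The plan is to establish membership of $M_t$ in $\mathbb D^{1,2}$ together with the stated expression for its Malliavin derivative by reducing everything to the classical results quoted, namely \cite[Lemma 1.1]{GKH} and \cite[Remark 8.16 and Problem 8.17]{KS}. First I would recall that for a fixed $t$, the running maximum $M_t=\sup_{s\in[0,t]}W_s$ can be written as the (a.s.-attained) supremum of a continuous Gaussian process over a compact interval, so that the almost sure existence and uniqueness of the argmax $\tau_t$ with $W_{\tau_t}=M_t$ follows from the fact that the supremum of Brownian motion on $[0,t]$ is attained at a unique point; this uniqueness is exactly what \cite[Problem 8.17]{KS} records, and it is what makes $D_r M_t$ well-defined pathwise.

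Next I would prove Malliavin differentiability. The standard route is to approximate $M_t$ by $M_t^n := \max_{0\le i\le n} W_{t i/n}$, which is a smooth (in fact piecewise-smooth, Lipschitz) function of the finitely many Gaussian variables $(W_{ti/n})_{0\le i\le n}$, hence lies in $\mathbb D^{1,2}$ with $D_r M_t^n = \mathbf 1_{[0, t i^\ast/n]}(r)$ on the event where the maximum is attained at the index $i^\ast$ (ties forming a null set). Since $M_t^n \to M_t$ in $L^2(\Omega)$ and the derivatives $D M_t^n$ are uniformly bounded in $L^2(\Omega\times[0,T])$ (indeed $\|D M_t^n\|_{\mathfrak h}\le \sqrt T$), the closedness of the operator $\nabla$ gives $M_t\in\mathbb D^{1,2}$. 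To identify the limit one argues that $t i^\ast/n \to \tau_t$ a.s. (by continuity of $W$ and a.s.\ uniqueness of the argmax), so $\mathbf 1_{[0,ti^\ast/n]} \to \mathbf 1_{[0,\tau_t]}$ in $\mathfrak h$ a.s.\ and, being dominated, in $L^2(\Omega;\mathfrak h)$; hence $D_r M_t = \mathbf 1_{r\le \tau_t}$. This is precisely the content of \cite[Lemma 1.1]{GKH}, so I would simply cite it and indicate the argmax-approximation argument in one line. Finally, the law of $\tau_t$ is the classical arcsine law: by Brownian scaling $\tau_t \overset{d}{=} t\,\tau_1$, and for Brownian motion on $[0,1]$ the location of the maximum has density $\frac{1}{\pi\sqrt{s(1-s)}}$ on $(0,1)$, equivalently $\mathbb P(\tau_t\le s) = \frac{2}{\pi}\arcsin\sqrt{s/t}$; this is \cite[Remark 8.16]{KS} (the arcsine law for the argmax, closely related to Lévy's arcsine law for occupation times).

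The only genuinely delicate point is the a.s.\ uniqueness of $\tau_t$ and the a.s.\ convergence of the discrete argmax to it — everything else is a routine closability argument plus a citation. Since uniqueness of the argmax of Brownian motion on a fixed interval is a standard fact (the maximum and its value are independent in the right sense, and two local maxima at the same level occur with probability zero), and since the statement is explicitly attributed to the quoted references, I would present the lemma essentially as a corollary of \cite[Lemma 1.1]{GKH}, \cite[Remark 8.16 and Problem 8.17]{KS}, adding only the short approximation remark above for completeness. No further computation is needed.
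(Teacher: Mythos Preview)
Your proposal is correct and aligns with the paper's approach: the paper does not give a proof at all but simply states that the lemma ``is a direct consequence of \cite[Lemma 1.1]{GKH}, \cite[Remark 8.16 and Problem 8.17]{KS}.'' Your sketch of the discrete-argmax approximation and the arcsine law merely spells out what those references contain, so you are providing more detail than the paper itself, not a different argument.
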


\noindent We consider the affine non-Markovian BSDE
\begin{equation}\label{bsdefinanceexemple}Y_t= \xi- \int_t^T(r_s Y_s+\theta_s Z_s )ds -\int_t^TZ_s dW_s, \; \xi=f(M_T),\end{equation} where $f$ is a continuous mapping from $\mathbb R$ into $\mathbb R$. We have the following proposition which is a consequence of Lemma \ref{lemmaGKH} together with Theorem  \ref{chapBiothm:densityaffine}.
\begin{Proposition} Let $Y$ be the first component of the solution of BSDE \eqref{bsdefinanceexemple}, hence for any $t\in (0,T]$, if $f$ is differentiable $\lambda(dx)$-a.e. and one of the following two assumptions is satisfied
\begin{itemize}
\item[$(lb+)$] $f\geq 0$ and $f'\leq 0$ and $f'<0$ on a set with positive Lebesgue measure,
\item[$(lb-)$] $f\leq 0$ and $f'\geq 0$ and $f'>0$ on a set with positive Lebesgue measure,
\end{itemize}
then the law of $Y_t$ is absolutely continuous with respect to the  Lebesgue measure.
\end{Proposition}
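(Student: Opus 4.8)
The plan is to recognise BSDE~\eqref{bsdefinanceexemple} as the affine equation~\eqref{chapBioedsraffine} with $\lambda\equiv 0$, $\mu_s=-r_s$ and $\nu_s=-\theta_s$, and to conclude via Theorem~\ref{chapBiothm:densityaffine} (Bouleau--Hirsch's Criterion, Theorem~\ref{BH}). First I would check that the standing assumptions of this section provide the hypotheses of Theorems~\ref{thm:affine:existence} and~\ref{chapBiothm:diff:affine:BC}: under~$\mathbf{(\Theta)}$ the process $\theta$ is deterministic and square integrable, so $\int_0^\cdot\theta_s\,dW_s$ is a BMO-martingale and $\mathbf{(A_1)}$ holds; since $r$ is the Ornstein--Uhlenbeck process~\eqref{vasicek:expression} it is Gaussian, hence $\exp\!\big(\int_0^\cdot|r_s|\,ds\big)\in\mathcal S_p$ for every $p$, and (with $f$ regular enough that $\xi=f(M_T)\in\mathbb D^{1,p}$ and $\mathbb E[|\xi|^p]<+\infty$ for all $p$, as is tacitly assumed) $\mathbf{(A_2)}$ and $\mathbf{(DA_1)}$ hold; $\mathbf{(DA_2)}$ follows from Lemma~\ref{lemma:derivepos} ($D_ur_s=\varpi$, $D^q r_s=0$ for $q\ge 2$) and $\theta$ being deterministic, while the chain rule and Lemma~\ref{lemmaGKH} give $D_u\xi=f'(M_T)\,D_uM_T=f'(M_T)\,\mathbf 1_{u\le\tau_T}$. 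Theorem~\ref{chapBiothm:diff:affine:BC} then yields a unique solution $(Y,Z)\in\mathcal S_p\times\mathcal H_p$ for all $p>1$, with $Y_t\in\mathbb D^{1,p}$ and $D_uY_t$ solving the linear BSDE~\eqref{chapBioeq:DY:thm:BC:affine}; running the Girsanov/linearisation argument of the proof of Theorem~\ref{chapBiothm:densityaffine} (see~\eqref{dY_affine}) with $D_u\nu_s=0$, $D_u\mu_s=-\varpi$, one gets for $0\le u\le t$
$$D_uY_t=\mathbb E_t^{\mathbb Q}\Big[f'(M_T)\,\mathbf 1_{u\le\tau_T}\,e^{-\int_t^Tr_s\,ds}-\varpi\int_t^TY_s\,e^{-\int_t^sr_\alpha\,d\alpha}\,ds\Big],\qquad\tfrac{d\mathbb Q}{d\mathbb P}=\mathcal E\Big(\int_0^T\theta_s\,dW_s\Big).$$

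Next I would fix signs. Under $(lb+)$, $\xi=f(M_T)\ge 0$, so Lemma~\ref{lemma:signY} gives $Y_s\ge 0$ for all $s$; with $f'\le 0$ both terms in the bracket are nonpositive, hence $D_uY_t\le 0$ for $\lambda(du)$-a.e.\ $u$, $\mathbb P$-a.s.\ (i.e.\ $\underline{d\lambda}(t)=\overline{d\lambda}(t)=0$ and $(\mathcal P-)$ hold, since $D_u\mu_sY_s+D_u\nu_sZ_s=-\varpi Y_s\le 0$). For the strict part, pick $B\subset(0,+\infty)$ with $\lambda(B)>0$ and $f'<0$ on $B$; since the law of $M_T$ has a density positive on $(0,+\infty)$, $A:=\{M_T\in B\}$ satisfies $\mathbb P(A)>0$. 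On $A$, $D_u\xi<0$ for $u\in[0,\tau_T)$ and $\tau_T>0$ a.s., so $\int_0^tD_u\xi\,du=f'(M_T)\,(t\wedge\tau_T)$ is $\le 0$ everywhere and $<0$ on $A$; integrating the display above in $u$ (Fubini),
$$\int_0^tD_uY_t\,du=\mathbb E_t^{\mathbb Q}\Big[f'(M_T)\,(t\wedge\tau_T)\,e^{-\int_t^Tr_s\,ds}\Big]-\varpi\,t\int_t^T\mathbb E_t^{\mathbb Q}\big[Y_s\,e^{-\int_t^sr_\alpha\,d\alpha}\big]\,ds\le 0,$$
and this is $<0$ $\mathbb P$-a.s.: the second term is $\le 0$, while the first is the $\mathbb Q$-conditional expectation of a nonpositive variable that is $<0$ on $A$, and $\mathbb P(M_T\in B\mid\mathcal F_t)>0$ $\mathbb P$-a.s.\ because $M_T=\max\big(M_t,\,W_t+\sup_{s\in[t,T]}(W_s-W_t)\big)$ and the running maximum of the increment is independent of $\mathcal F_t$ with a density on $(0,+\infty)$. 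Hence $D_\cdot Y_t\not\equiv 0$, so $\|DY_t\|_{\mathfrak h}^2=\int_0^t|D_uY_t|^2\,du>0$ $\mathbb P$-a.s., and Theorem~\ref{BH} gives the absolute continuity of the law of $Y_t$. The case $(lb-)$ is symmetric ($\xi\le 0\Rightarrow Y_s\le 0$, $(\mathcal P+)$ holds, $D_uY_t\ge 0$, and $\int_0^tD_uY_t\,du>0$ a.s.).

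I expect the strict-sign step to be the main obstacle. Hypotheses $(\textsc{aH}\pm)$ of Theorem~\ref{chapBiothm:densityaffine} are not met verbatim, because $D_uM_T=\mathbf 1_{u\le\tau_T}$ vanishes for $u>\tau_T$ with $\tau_T$ random, so $D_u\xi$ is not bounded away from $0$ uniformly in $u$ on any fixed event; one therefore has to rerun the Girsanov/linearisation argument of that proof with the explicit form of $D_u\xi$ — integrating in $u$ to recover the strictly positive factor $t\wedge\tau_T$ — rather than invoke the theorem directly, and to justify $\mathbb P(M_T\in B\mid\mathcal F_t)>0$ a.s. A related delicate point is the case where, on an event of positive probability, $B$ lies entirely below the running maximum $M_t$ already attained (there $M_T\notin B$ surely and $DY_t$ may vanish on that event): this is why ``$f'<0$ on a set of positive Lebesgue measure'' has to be read against the law of $M_T$, e.g.\ as ``$f'<0$ on a positive-measure subset of $(0,+\infty)$ reached by $M_T$ from every level'', which holds automatically whenever $\{f'<0\}$ is unbounded above.
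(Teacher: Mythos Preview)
Your approach follows exactly the route the paper intends: the paper offers no detailed argument and simply states that the proposition ``is a consequence of Lemma~\ref{lemmaGKH} together with Theorem~\ref{chapBiothm:densityaffine}''. You have unpacked this citation correctly --- identifying the affine structure, checking $\mathbf{(A_1)}$--$\mathbf{(DA_2)}$, computing $D_u\xi=f'(M_T)\mathbf 1_{u\le\tau_T}$ via the chain rule and Lemma~\ref{lemmaGKH}, and running the Girsanov/linearisation of~\eqref{dY_affine} --- and in doing so you have gone further than the paper itself.

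Two remarks. First, you are right that the hypotheses $(\textsc{aH}\pm)$ of Theorem~\ref{chapBiothm:densityaffine} are \emph{not} satisfied literally: since $D_uM_T=\mathbf 1_{u\le\tau_T}$ vanishes for $u>\tau_T$ with $\tau_T<T$ on a set of positive probability, one cannot have $D_u\xi<0$ for $\lambda(du)$-a.e.\ $u$ on any event. The paper glosses over this; your remedy of integrating in $u$ and re-running the linearisation argument with the explicit indicator is the correct fix, and it is the substantive content that the paper's one-line citation is hiding. (Minor sign: since $f_z=-\theta_s$ here, the Girsanov density should be $\mathcal E(-\int_0^T\theta_s\,dW_s)$, as in Lemma~\ref{lemma:signY}; this does not affect the argument.)

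Second, the gap you flag at the end is genuine and is a gap in the proposition as stated, not merely in your proof. If the set $\{f'<0\}$ is bounded above by some $K$, then on the event $\{M_t>K\}$ (which has positive probability for every $t>0$) one has $M_T>K$ surely, hence $f'(M_T)=0$ and the first term in your integrated expression vanishes; one is then forced to extract strict negativity from the $-\varpi\int_t^T\mathbb E_t^{\mathbb Q}[Y_s e^{-\int_t^s r}]\,ds$ term, which requires $Y_s>0$ with positive conditional probability --- and this in turn needs $f(M_T)>0$ to be reachable from level $M_t$, which is not guaranteed by $(lb+)$ alone. Your suggested strengthening (e.g.\ $\{f'<0\}$ unbounded above, or $f>0$ everywhere on $(0,\infty)$) closes this; the paper does not address it.
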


\begin{Remark}\label{remark_lookback}
Since $\xi:= M_T$ is not twice Malliavin differentiable (see \cite{GKH}), \textit{i.e.} $\xi$ does
not belong to $\mathbb D^{2,p}$ whatever $p\geq 1$, we cannot reproduce the proof of Proposition \ref{propvasicek_density} to study
the problem of existence of density for $Z_t$.
\end{Remark}
\section*{Acknowledgments}
The author thanks Dylan Possama\"i and Anthony R\'eveillac for conversations and precious advice in the writing of this paper. The author is grateful to R\'egion Ile-De-France for financial support. 

\end{document}